\newtheorem{thm}{Theorem}[section]
\newtheorem{prop}[thm]{Proposition}
\newtheorem{lem}[thm]{Lemma}
\newtheorem{cor}[thm]{Corollary}
\newtheorem{setting}{Setting}
\theoremstyle{definition}
\newtheorem{defi}[thm]{Definition}
\newtheorem{rem}[thm]{Remark}
\newtheorem{ex}[thm]{Example}
\newtheorem{ques}[thm]{Question}
\newtheorem{prob}[thm]{Problem}
\DeclareMathOperator{\supp}{supp}
\DeclareMathOperator{\card}{card}
\DeclareMathOperator{\spec}{sp}
\DeclareMathOperator{\spa}{span}
\DeclareMathOperator{\Int}{int}
\DeclareMathOperator{\vol}{vol}
\newcommand{\Zpo}{\mathbb{Z}_{>0}}
\newcommand{\calT}{\mathcal{T}}
\newcommand{\calA}{\mathcal{A}}
\newcommand{\calS}{\mathcal{S}}
\newcommand{\calP}{\mathcal{P}}
\newcommand{\calQ}{\mathcal{Q}}
\newcommand{\calR}{\mathcal{R}}
\newcommand{\e}{\varepsilon}
\newcommand{\Q}{\mathbb{Q}}
\newcommand{\R}{\mathbb{R}}
\newcommand{\C}{\mathbb{C}}
\newcommand{\Z}{\mathbb{Z}}
\newcommand{\sci}{\wedge}
\newcommand{\Tk}{{\mathcal T}}
\newcommand{\N}{\mathbb{N}}
\newcommand{\Lam}{\Lambda}
\newcommand{\dens}{\mbox{\rm dens}}
\newcommand{\Gk}{{\mathcal G}}
\newcommand{\Ok}{{\mathcal O}}
\newcommand{\om}{\omega}
\definecolor{Yasushi}{rgb}{0.3,0.6,0.1}
\title{On arithmetic progressions in non-periodic self-affine tilings}
\author{Yasushi Nagai
\thanks{School of Mathematics and Statistics, Faculty of Science, Technology, Engineering and Mathematics, The Open University, Walton Hall, Milton Keynes, MK7 6AA, UK,
e-mail: yasushi.nagai@open.ac.uk}
\and Shigeki Akiyama 
\thanks{Institute of Mathematics, University of Tsukuba, 1-1-1 Tennodai, Tsukuba, Ibaraki 305-8571, Japan,
e-mail:akiyama@math.tsukuba.ac.jp}
\and 
Jeong-Yup Lee
\thanks{Department of Mathematics Education, Catholic Kwandong University, Gangneung, Gangwon 210-701, Korea, or KIAS, 85 Hoegiro, Dongdaemun-gu, Seoul 02455, Korea,
e-mail:jylee@cku.ac.kr}}
\date{\today}							
\begin{document}

\maketitle

\begin{abstract}
         We study the repetition of patches in self-affine tilings in $\R^d$.
          In particular, we study the
         existence and non-existence of arithmetic progressions.
         We first show that an arithmetic condition of the expansion map 
         for a self-affine tiling implies the
          non-existence of certain one-dimensional arithmetic progressions.
          Next, we show that the existence of full-rank infinite arithmetic progressions,
          pure discrete dynamical spectrum, and limit periodicity are all equivalent
          for a certain class of self-affine tilings.
          We finish by giving a complete picture 
          for the existence/non-existence of
          full-rank infinite arithmetic progressions in the self-similar tilings in $\R^d$.
\end{abstract}

\section{Introduction}
A tiling is a cover of the Euclidean space $\R^{d}$ by a set of tiles without interior overlaps.
The simplest class of tilings is the one of crystallographic tilings, where a tiling $\calT$ is
crystallographic if its symmetry group is a crystallographic group, i.e. it has 
translational symmetry $\calT+x=\calT$ for the vectors $x$ in a basis of $\R^{d}$.
Although the crystallographic tilings are interesting, the discovery of quasicrystals requires us to
go beyond that category and study non-periodic tilings, that is, tilings without translational symmetry.
As models of quasicrystals, the non-periodic tilings that are ``ordered'' are important, although there
are several interpretations of the term ``ordered''.
The most important interpretation is that the tiling is pure point diffractive, which by definition
means that the diffraction measure is pure point.
(The diffraction measure models physical diffraction pattern.
The presence of point masses is an indication of
``order'' for the tiling.
For an introduction of diffraction spectrum, see for example \cite[chapter 9]{Baake-Grimm1}.)
 Being pure point diffractive is equivalent to
having pure discrete dynamical spectrum \cite{BL}. (We will define pure discrete dynamical
spectrum in page \pageref{def_pure_discrete_spec}.)
Being pure point diffractive is also equivalent to
being almost periodic  in the sense of Gou\'er\'e \cite{G} and to being mean almost periodic
\cite{LSS}.
 These forms of almost periodicity 
 characterize how patterns (patches) distribute in the tiling.
This is why the distribution of patches in a given tiling, especially the repetition of finite patches,
is important.

Another interpretation of the term ``ordered'' is that the tiling is limit-periodic. 
Here, a tiling is said to be limit-periodic if,
except for a set of zero-density tiles, all tiles $T$ repeat crystallographically, which means
that
there is a lattice $L$ of $\R^{d}$ for each $T$ 
such that the all translates $T+x$, $x\in L$, are included in the tiling.
We usually further assume that the lattice $L$ which appears in this way belongs to a decreasing
sequence of lattices.
This is a geometric version of Toeplitz sequences \cite{JK}.
The period doubling tiling and the chair tilings are well-known examples of limit-periodic tiling.

In this paper, we deal with a type of the 
repetition of patches, that is, arithmetic progressions of patches,
and discuss its relations with pure point diffraction and limit-periodicity
for self-affine tilings.
A self-affine tiling is a tiling with an inflation-subdivision symmetry, which
means that there exists an expansive linear map $Q$ that makes all tiles larger and we can subdivide
them into the tiles of original size to obtain the original tiling. Illustrative 
examples are found in Figure \ref{Fig1}, where we show a chair tiling and a Robinson triangle
 tiling, respectively.
 The latter is essentially the same (MLD) as one of the famous Penrose tilings 
 \cite[Section 6.2]{Baake-Grimm1}.

\begin{figure}[h]
\begin{center}
\subfigure[a chair tiling]{
\includegraphics[width=5cm]{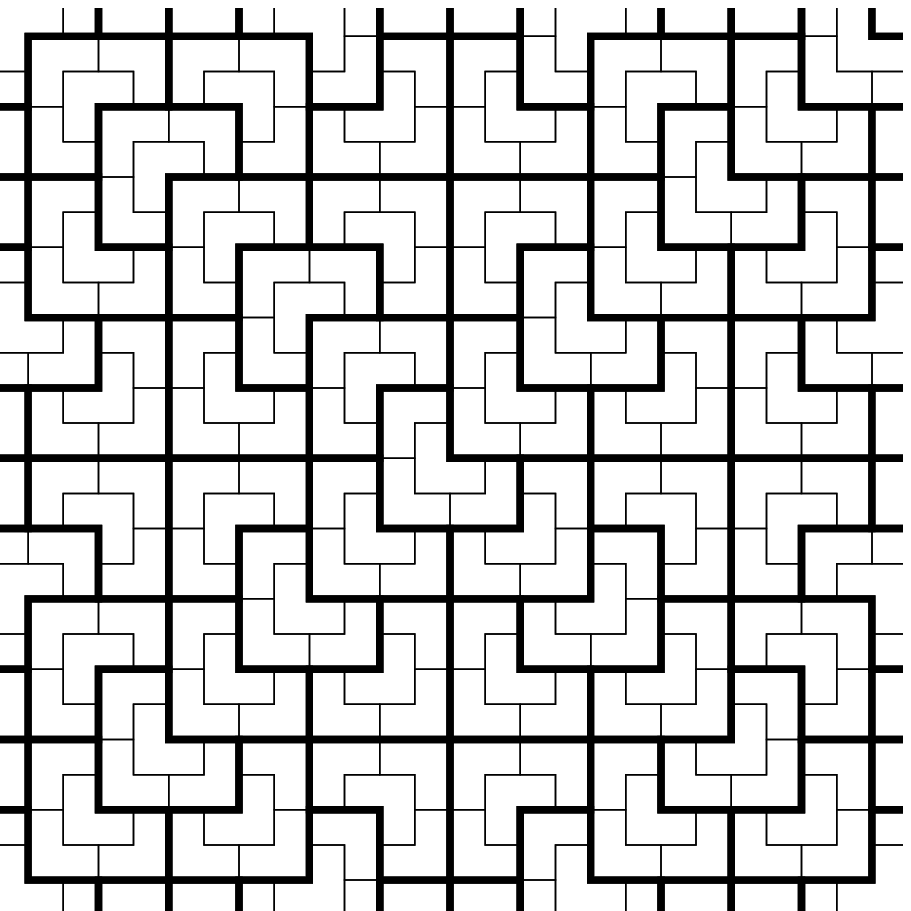}
\label{Chair}
}
\hspace{1cm}
\subfigure[a Robinson triangle tiling]{
\includegraphics[width=5cm]{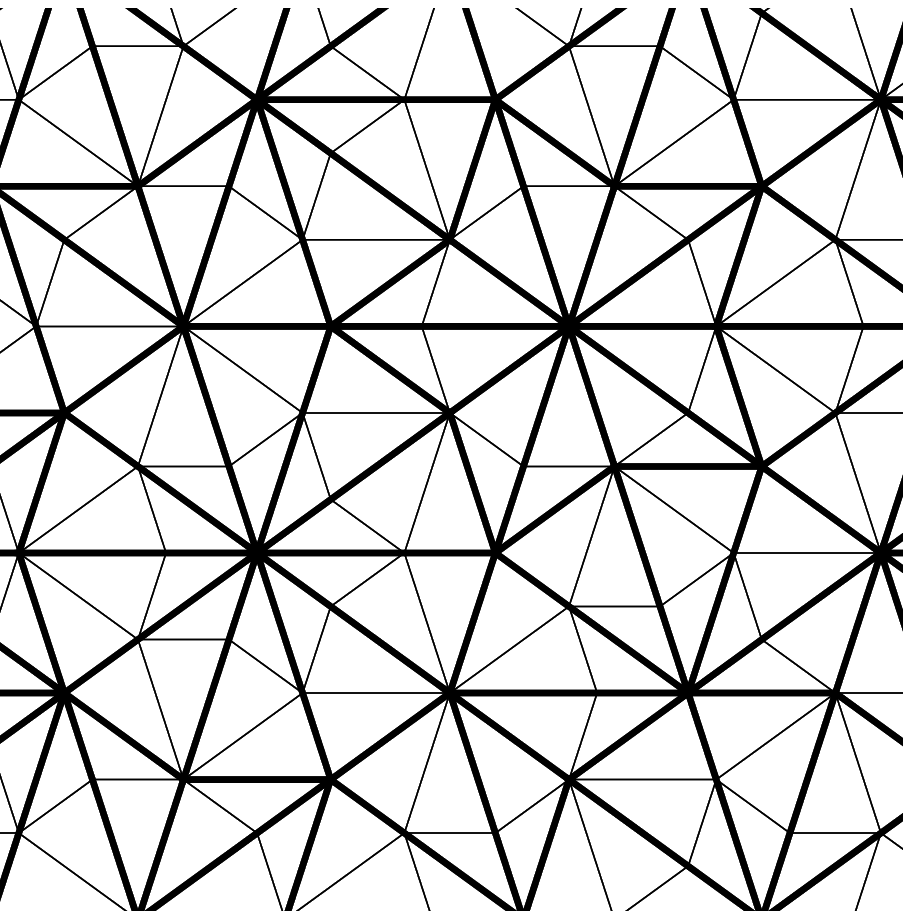}
\label{Penrose}
}
\caption{Self-affine tiling}
\label{Fig1}
\end{center}
\end{figure}

A one-dimensional arithmetic progression of a patch $\calP$ is a patch of the form
$\bigcup_{k=1}^{n}\calP+kx$, where $x$ is a non-zero vector of $\R^{d}$. Here,
$n$ is either a positive 
integer or $\infty$.
This is a pattern where the patch $\calP$ appears $n$ times,
separated in each instance by the translation vector $x$ from the previous.
It is interesting to observe 
that a chair tiling possess such an arithmetic progression in the diagonal direction,
as we see in Figure \ref{Fig1}.
In this article, we first give an arithmetic condition on the expansion map $Q$ for
self-affine tilings which implies the
\emph{non-existence} of arbitrarily long arithmetic progressions for fixed $\calP$ and $x$
(Theorem \ref{thm_using_conti_eigenft}, Theorem \ref{thm_internal_space}).
For example, in Penrose tilings, the arithmetic progressions stop at a certain finite $n$ which
depends on each $\calP$ and $x$.
This is done in Section \ref{section_non-existence}.

Next, in Section \ref{section_full-rank-AP},
we deal with \emph{full-rank infinite arithmetic progressions}, which are by definition
patches of the form
\begin{align*}
       \bigcup_{(l_{1},l_{2},\ldots l_{d})\in\mathbb{Z}^{d}}\calP+\sum_{j=1}^{d} l_{j}b_{j},
\end{align*}
where $\{b_{1},b_{2},\ldots b_{d}\}$ is a basis of $\R^{d}$.
Interestingly, for a class of self-affine tilings, the existence of full-rank arithmetic progressions is
equivalent to having pure discrete dynamical spectrum and also to being limit periodic
(Theorem \ref{full-rank-infiniteAP_implies_pp}, 
Corollary \ref{cor_sufficient_conditions_for_limit-period}).

In Section \ref{section_one-dim}, we give a complete picture for the existence/non-existence of
full-rank infinite arithmetic progressions in a self-similar tiling in $\R^d$. We then give further problems 
in Section \ref{sec_further_questions}
and
finish the article with Appendix, where some of the proofs are given.

All of this is done after introducing the necessary notation and recalling known results in 
Section \ref{section_notation_known-results}.

\section{Notation and known results}
\label{section_notation_known-results}
In this section, we recall relevant notation and known results.
In the whole article, $B(x,R)$ is the closed ball in the $d$-dimensional Euclidean space
$\mathbb{R}^{d}$ with its center $x\in\R^{d}$ and
radius $R>0$. Often $B(0,R)$ is denoted by $B_{R}$.
The symbol $\mathbb{T}$ denotes the one-dimensional torus: 
$\mathbb{T}=\{z\in\mathbb{C}\mid |z|=1\}$.

\subsection{Tilings and substitutions}

Let $L$ be a finite set. An $L$-labeled tile is a pair $(S,l)$ of a compact non-empty
subset $S$ of $\R^d$, such that $\overline{S^\circ}=S$
(the closure of the interior coincides with the original $S$), and an $l\in L$.
We often fix $L$ and call $L$-labeled tiles just tiles.
For a tile $T=(S,l)$, we write $S=\supp T$ and $l=l(T)$.
We also write $\Int(T)=(\supp T)^{\circ}$.
For a $T=(S,l)$ and an $x\in\R^{d}$, we set $T+x=(S+x,l)$.

Alternatively, we also 
call a compact non-empty subset $S$ of $\R^{d}$ such that $S=\overline{S^{\circ}}$
a tile. 
Thus ``a tile'' means either a ``labeled'' or an ``un-labeled'' tile.
Both types of
 tiles are useful in aperiodic order. For example, if we consider the geometric realization of
a constant-length symbolic substitution, we have to give labels to tiles in order to distinguish 
the intervals for different letters. On the other hand, for many geometric substitutions labels are not
necessary.
\footnote{Sometimes it is not a good idea to
 use $\{(A_{i},i)\mid i=1,2,\ldots n\}$ as the alphabet of a geometric substitution, 
since this does not describe the rotational symmetry of a substitution
($\omega(R(P))=R(\omega(P))$, where $P$ is a proto-tile and $R$ is a rotation), which we often
use tacitly. It is sometimes more  convenient to use unlabeled tiles and sometimes labeled ones,
and so it is useful to include both types of tiles in the definition of tiles.}
For an unlabeled tile $S$, we also use the notation $\supp S=S$ and $\Int(S)=S^{\circ}$.

A set $\calP$ of tiles is called a  \emph{patch} if for any $T_1,T_2\in\calP$ with
$\Int(T_1)\cap\Int(T_2)\neq\emptyset$, we have $T_1=T_2$.
For a patch $\calP$, its \emph{support} $\supp\calP$ is defined by
$\supp\calP=\overline{\bigcup_{T\in\calP}\supp T}$.
If  a patch $\calT$ satisfies the condition $\supp\calT=\R^d$, we call $\calT$ a \emph{tiling}.
Often patches are assumed to be finite sets, but in this article we do not assume that.
(Infinite patches  that are not tilings appear in this article.)
A patch that is a  finite set is called a finite patch.
Given a tiling $\calT$, a patch $\calP$ is said to be \emph{$\calT$-legal} if there is an
$x\in\R^d$ such that $\calP+x\subset\calT$, where
for a patch $\calP$ in $\R^{d}$ and an $x\in\R^{d}$, we set $\calP+x=\{T+x\mid T\in\calP\}$.
A vector $x\in\R^{d}$ such that there is a $T\in\calT$ with $T+x\in\calT$ is called a 
\emph{return vector} for $\calT$.\label{def_return_vector}
The set of all return vectors for $\calT$ is denoted by $\Xi(\calT)$.

For a patch $\calP$ and a  subset $S$ of $\R^d$, we define a new patch $\calP\sqcap S$ via
\begin{align}
      \calP\sqcap S=\{T\in\calP\mid\supp T\cap S\neq\emptyset\}.
      \label{def_sqcap_cutting-off}
\end{align}

Define another patch $\calP\sci S$ via
     \begin{align}
         \calP\sci S=\{T\in\calP\mid \supp T\subset S\}.
         \label{def_sci_cutting-off}
     \end{align}

Next, we define the densities for patches. We first define van Hove sequences.

\begin{defi}
      A sequence $(A_{n})_{n=1,2,\cdots}$ of measurable subsets of $\R^{d}$ with positive Lebesgue measures is called a 
      van Hove sequence if, for each compact $K\subset\R^{d}$,
      we have
      \begin{align*}
             \lim_{n}\frac{\vol(\partial^{K}A_{n})}{\vol(A_{n})}=0,
      \end{align*}
      where $\vol$ denotes the Lebesgue measure and
      \begin{align*}
             \partial^{K}A_{n}=((K+A_{n})\setminus A_{n}^{\circ})\cup((-K+\overline{\R^{d}\setminus A_{n}})\cap A_{n}).
      \end{align*}

      The \emph{density} $\dens_{(A_{n})}\calP$ of a patch $\calP$
     with respect to a van Hove sequence
      $(A_{n})_{n}$ is defined via
      \begin{align}
             \dens_{(A_{n})_{n}}\calP=
             \limsup_{n\rightarrow\infty}\frac{1}{\vol (A_{n})}\vol((\supp\calP)\cap A_{n}).
             \label{def_density}
      \end{align}

      We say that
      a patch $\calP$ has \emph{zero density} if, for any van Hove sequence $(A_{n})_{n}$, the 
      density $\dens_{(A_{n})_{n}}\calP$ is zero.
\end{defi}

       If the diameters of the tiles in $\calP$ are bounded from above by some uniform constant, 
       we have
            \begin{align*}
             \dens_{(A_{n})_{n}}\calP             &= \limsup_{n\rightarrow\infty}\frac{1}{\vol(A_{n})}\vol(\supp(\calP\sqcap A_{n}))\\
             &= \limsup_{n\rightarrow\infty}\frac{1}{\vol(A_{n})}\vol(\supp(\calP\sci A_{n})).
             \end{align*}

The corresponding dynamical system for a tiling is an important object.
To define it, we need to define  the \emph{local matching topology}
on the set of all patches, as follows:
for patches $\calP_1$ and $\calP_2$, we define a set $\Delta(\calP_1,\calP_2)$ to be the
set of all real numbers $\e$ between $0$ and $\frac{1}{\sqrt{2}}$ such that there are
$x_1,x_2\in B_{\e}$ with
\begin{align*}
       (\calP_1+x_1)\sqcap B_{1/\e}=(\calP_2+x_2)\sqcap B_{1/\e}.
\end{align*}
Define a metric $\rho$ on the space of all patches in $\R^d$ via
\begin{align*}
      \rho(\calP_1,\calP_2)=\inf\Delta(\calP_1,\calP_2)\cup\left\{\frac{1}{\sqrt{2}}\right\}.
\end{align*}
Here, the choice of the number $1/\sqrt{2}$ makes it easier to prove the triangle inequality.
The topology defined by $\rho$ is called the local matching topology.
If we instead use $\sci$ as a cutting-off operation, the resulting topology is the same on
a large space of patches.
It is known that this metric is complete (\cite{Nagai_local-matching-top}).

Given a tiling $\calT$, define the \emph{continuous hull} $X_{\calT}$ via
\begin{align*}
       X_{\calT}=\overline{\{\calT+x\mid x\in\R^d\}},
\end{align*}
where the closure is taken with respect to the local matching topology.
There is a \textcolor{red}{necessary and} sufficient condition, \emph{finite local complexity (FLC)}, 
 for the continuous hull to be compact.
A tiling $\calT$ has FLC if, for each compact $K\subset\R^{d}$, the set
\begin{align*}
        \{\calT\sqcap (K+x)\mid x\in\R^{d}\}
\end{align*}
is finite up to translation. (That is, if we identify two patches that are translates of each other, then
the set is finite). 
The
FLC of a tiling \textcolor{red}{is equivalent to}
 the compactness of its continuous hull (\cite[Proposition 5.4]{Baake-Grimm1}).
By taking $K=\{0\}$, we also see
the FLC implies that there are only finitely many tiles in $\calT$ up to
translation. 
 The group $\R^d$ acts continuously on $X_{\calT}$ via
\begin{align*}
        X_{\calT}\times\R^d\ni(\calS,x)\mapsto \calS+x\in X_{\calT}.
\end{align*}
The pair of $X_{\calT}$ and this action is called \emph{the tiling dynamical system}
associated with $\calT$.
The tiling $\calT$  is said to be \emph{repetitive} if, for any finite $\calT$-legal patch $\calP$,
the translates of $\calP$ appear in $\calT$ with a uniformly bounded gap.
If $\calT$ is repetitive, then the tiling dynamical system $(X_{\calT},\R^{d})$ is minimal
(\cite[Proposition 5.4]{Baake-Grimm1}), \textcolor{red}{that is, every orbit is dense}.

In this article,  we assume that all tilings which appear have FLC, and
 there is one and only one invariant probability measure $\mu$ for 
each tiling dynamical system. The self-affine tilings defined via a primitive FLC substitutions, which
we focus on in this article, 
satisfy this assumption.

If, for a
vector $a\in\R^d$, there is a non-zero vector
$f\in L^{2}(\mu)$ such that for each $x\in\R^{d}$, the two maps
\begin{align*}
       X_{\calT}\ni\calS\mapsto f(\calS-x),\\
        X_{\calT}\ni\calS\mapsto e^{2\pi i\langle x,a\rangle}f(\calS)
\end{align*}
coincide $\mu$-almost everywhere, then we call $a$ 
an \emph{eigenvalue} for the tiling dynamical system $(X_{\calT},\R^d)$.
Here, $\langle\cdot,\cdot\rangle$
is the standard Euclidean inner product. In this case
the function $f$ is called an \emph{eigenfunction}.
If the function $f$ can be chosen to be continuous, we call $a$ a \emph{topological eigenvalue} and
$f$ a \emph{continuous eigenfunction}.
We say that
$\calT$ has \emph{pure discrete dynamical spectrum} if there is a complete orthonormal basis for
the Hilbert space $L^{2}(\mu)$ consisting of eigenfunctions.
\label{def_pure_discrete_spec}


\begin{figure}[htbp]
\begin{center}
\includegraphics{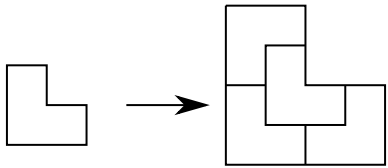}
\caption{the chair substitution}
\label{figure_chair_substi}
\label{default}
\end{center}
\end{figure}

One can construct interesting tilings via \emph{substitution rules}.
Intuitively, a substitution rule is a recipe for ``expanding a tile, followed by subdividing it so that
we obtain a patch.'' (See Figure \ref{figure_chair_substi}. This is an example of substitution
rule, called the chair substitution, that generates chair tilings in Figure \ref{Fig1}.)
A substitution rule by definition consists of
\begin{itemize}
 \item an \emph{alphabet} $\calA$, which is a finite set consisting of tiles in $\R^d$,
 \item an \emph{expansion map} $Q$, which is a linear transformation
        $Q\colon \R^d\rightarrow\R^d$ whose eigenvalues are all greater than $1$ in absolute values,
        and
 \item a map $\omega$ which sends each element $P$
       of $\calA$ to a patch $\omega(P)$ consisting
        of translates of elements of $\calA$ such that
        \begin{align*}
               \supp\omega(P)=Q(\supp P).
        \end{align*}
\end{itemize}
Often the map $\omega$ is also called a substitution rule.
The elements of $\calA$ are called \emph{proto-tiles}.
For the example of the chair substitution, the alphabet $\calA$ consists of the tile on the left-hand side
of Figure \ref{figure_chair_substi} and its rotations by degrees 90, 180, 270.
$Q=2I$ (where $I$ is the identity matrix) and the map $\omega$ sends each element of $\calA$
to the patch that is obtained in the way depicted in Figure \ref{figure_chair_substi}.

Given a substitution rule $\omega$ with an indexed alphabet $\calA=\{T_{1},T_{2},\ldots, T_{n}\}$,
we associate a substitution matrix $M_{\omega}$ whose $(i,j)$-element is the number of 
the occurrences
of the tile $T_{i}$ in $\omega(T_{j})$. The substitution $\omega$ is said to be \emph{primitive} if
the matrix $M_{\omega}$ is primitive, which means that
 its sufficiently large powers have only non-zero
elements.
The substitution $\omega$ is said to be \emph{irreducible} if the characteristic polynomial
of $M_{\omega}$ is
irreducible.

Given an $\omega$, we can define, in a natural way, 
 a patch $\omega(\calP)$ for a patch $\calP$ consisting of 
translates of tiles in $\calA$, by applying the same ``expanding and subdividing'' rule
 to each tile in 
$\calP$.
We can iterate $\omega$ and sometimes, as $n\rightarrow\infty$,
 $\omega^{n}(\calP)$ grows larger and larger and
converges to a tiling.
Intuitively, a tiling constructed in this way is called a self-affine tiling, of which
formal definition is given below.
 If a tiling $\calT$ consists of translates of tiles in $\calA$ and we have
$\omega(\calT)=\calT$, we call $\calT$ a fixed point of the substitution rule $\omega$.
If moreover
$\calT$ has FLC and is repetitive, we call $\calT$ a \emph{self-affine tiling}.
If the expansion map is of the form $\lambda I$ for some $\lambda>1$, where $I$ is the
identity map, then the resulting self-affine tiling is called a \emph{self-similar tiling}.
In this case, the number $\lambda$ is called the \emph{expansion factor}.
Some chair tilings are self-similar tilings with expansion factor $2$.

For a substitution rule $\omega$ of $\R^{d}$, we say $\omega$ has FLC if for each compact
$K\subset\R^{d}$, the set
\begin{align*}
        \{\omega^{n}(P)\sqcap (K+x)\mid n>0, P\in\mathcal{A}, x\in\R^{d}\}
\end{align*}
is finite up to translation. If $\omega$ has FLC, any repetitive fixed points for $\omega$
have FLC.

In general, given a tiling $\calT$ of $\R^{d}$ with FLC, 
we can obtain a discrete and closed subset $D$
of $\R^{d}$ by choosing one representative point from the support of each tile in $\calT$,
in such a way that the relative positions of representative points for translationally equivalent tiles
are always the same.
This $D$ is an example of Delone set, because (1)distances between any two points are 
uniformly bounded from below,
and (2)there is an $R>0$ such that any balls of $\R^{d}$ with radius $R$ contain points of $D$.
If $\calT$ is a self-affine tiling associated with
 a substitution rule $\omega$ with an expansion map $Q$, we can take a \emph{control point}\label{def_control-point} from each tile as a representative point
\cite{prag2}: we first choose a tile map $f\colon\calT\rightarrow\calT$ such that
$f(T)\in\omega(T)$ for each $T\in\calT$, and let $c(T)\in\supp T$ be the unique point in
$\bigcap_{n=1}^{\infty}Q^{-n}(f^n(T))$. The resulting Delone set 
$D=\{c(T)\mid T\in\calT\}$ is well-behaved in the sense that
(1) we have $Q(D)\subset D$
(because $Q(c(T))=c(f(T))$),
and (2) with an additional condition on $\calT$, the set $D$ is a \emph{Meyer set}\label{def_Meyer-set}, 
which means that for some
neighborhood $U$ of $0\in\R^{d}$, we have
\begin{align*}
      ((D-D)-(D-D))\cap U=\{0\}.
\end{align*}
The tilings $\calT$ with the latter property (2) for some choice of representative points 
are said to have the
\emph{Meyer property}.

A condition for a self-affine tiling $\calT$ to have the Meyer property is given in \cite{LeeSol:08}.
We say that a set $\Lambda$ of algebraic integers forms a \emph{Pisot family} if for any 
$\lambda\in\Lambda$ and any algebraic conjugate $\mu$ of $\lambda$ with $|\mu|\geq 1$,
we have $\mu\in\Lambda$.
\begin{thm}[\cite{LeeSol:08}]\label{thm_Lee-Solomyak_Meyer_property}
      Let $\calT$ be a self-affine tiling in $\R^{d}$ with an expansion map $Q$.
      Suppose $Q$ is diagonalizable over $\C$ and all the eigenvalues are algebraic conjugates and 
      have the same multiplicity. 
      Then $\calT$ has the Meyer property if and only if
      $\spec(Q)$ is a Pisot family, where $\spec(Q)$ is the set of all eigenvalues for $Q$.
\end{thm}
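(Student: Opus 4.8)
The plan is to pass from $\calT$ to its control point set $D=\{c(T)\mid T\in\calT\}$, which is a Delone set with $Q(D)\subset D$, and to read off the Meyer property from the action of $Q$ on the finitely generated $\Z$-module $\Delta:=\langle D-D\rangle_{\Z}=\langle\Xi(\calT)\rangle_{\Z}$. First I would lay down the algebraic skeleton. By FLC, $\Delta$ is free of finite rank $m\ge d$, and since $Q(D)\subset D$ we have $Q(\Delta)\subset\Delta$, so in a $\Z$-basis $Q|_{\Delta}$ is an integer matrix with monic characteristic polynomial $C\in\Z[x]$; in particular every eigenvalue of $Q$ is an algebraic integer, and if $\lambda$ is one of them, its minimal polynomial $g\in\Z[x]$ divides $C$. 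Using the three standing hypotheses on $Q$ (diagonalizability over $\C$, eigenvalues pairwise conjugate to $\lambda$, common multiplicity) together with the structure of self-affine tilings, I would show that $Q|_{\Delta}$ is semisimple and that all of its eigenvalues are algebraic conjugates of $\lambda$; hence $\Delta\otimes_{\Z}\R$ splits $Q$-equivariantly as $\R^{d}\oplus N$, where the physical part $\R^{d}$ carries the eigenvalues in $\spec(Q)$ (all of modulus $>1$) and the internal part $N$ carries the remaining roots of $g$. With this set up, the statement ``$\spec(Q)$ is a Pisot family'' translates into the single assertion that $Q|_{N}$ has spectral radius strictly less than $1$.

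For the implication ``$\spec(Q)$ a Pisot family $\Rightarrow$ Meyer property'' I would build the cut-and-project scheme $(\R^{d},N,\widetilde\Delta)$, with $\widetilde\Delta$ the copy of $\Delta$ sitting diagonally in $\R^{d}\times N$: it is a lattice, it projects injectively onto $\R^{d}$ and densely into $N$. The point is that $D$ lies inside the model set cut out by a \emph{compact} window. Using $\omega(\calT)=\calT$, each control point $c(T)$ can be written through its chain of ancestor tiles; since $Q$ is expanding this chain contracts, so after finitely many steps it reaches a fixed finite subset of $D$ (here FLC is used), and unfolding the internal coordinate along this chain exhibits $\pi_{N}(c(T))$ as an absolutely convergent geometric series governed by $Q|_{N}$ plus a tail term which is uniformly small because $Q|_{N}$ strictly contracts. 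Thus $\pi_{N}(D)$ is bounded, $D-D\subset\{v\in\Delta\mid\pi_{N}v\in W\}$ for a compact $W$, and difference sets of model sets with compact window are uniformly discrete; combined with relative density (repetitivity) this makes $D$ a Meyer set, so $\calT$ has the Meyer property.

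For the converse, assume $\calT$ has the Meyer property; by a mutual-local-derivability argument we may take $D$ itself to be a Meyer set. From the structure theory of Meyer sets, together with $Q(D)\subset D$, I would extract a $Q$-compatible cut-and-project description of $D$: a lattice embedding $v\mapsto(v,\psi v)$ of $\Delta$ into $\R^{d}\times H_{0}$ with $\dim H_{0}=m-d$, a linear map $Q_{H_{0}}$ with $\psi\circ Q=Q_{H_{0}}\circ\psi$, and a compact window with $\psi(D)$ inside it. Comparing characteristic polynomials shows $Q_{H_{0}}$ is similar to $Q|_{N}$. Because $\psi(D-D)$ is bounded and $Q^{n}(D-D)\subset D-D$, the orbits $Q_{H_{0}}^{\,n}\psi(v)=\psi(Q^{n}v)$ stay bounded, so $Q_{H_{0}}$, hence $Q|_{N}$, has spectral radius at most $1$; therefore every algebraic conjugate of $\lambda$ of modulus $>1$ already lies in $\spec(Q)$. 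It then remains to exclude conjugates of $\lambda$ on the unit circle --- the delicate step --- which I would do using the primitivity of $\omega$, via a Perron--Frobenius argument relating $|\det Q|$ to the Perron number attached to $M_{\omega}$. With unit-circle conjugates excluded, $\spec(Q)$ is a Pisot family.

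I expect the main obstacle to be the first paragraph combined with the compactness of the internal window. Establishing that $Q|_{\Delta}$ is (conjugate to) an integer matrix is routine, but showing that it is semisimple with eigenvalues confined to the Galois orbit of $\lambda$ --- so that the internal space $N$ is well behaved --- is precisely where all three hypotheses on $Q$ are needed and is the technical heart of the reduction; after that, squeezing a genuinely bounded window out of the inflation relation in the Pisot$\,\Rightarrow\,$Meyer direction, and disposing of unit-circle conjugates in the converse, are the remaining delicate points, while everything else is the standard Meyer/model-set dictionary applied to the self-affine relation $Q(D)\subset D$.
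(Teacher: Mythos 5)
This theorem is not proved in the paper at all: it is imported verbatim from Lee and Solomyak \cite{LeeSol:08} and used as a black box (e.g.\ in Corollary \ref{cor_delaLlave}), so there is no in-paper proof to compare you against. Judged on its own terms, your outline correctly reproduces the architecture of the known argument --- control points, the $\Z$-module $\Delta$ generated by the return vectors with $Q(\Delta)\subset\Delta$, a physical/internal splitting, boundedness of the internal image of $D$ in one direction and a cut-and-project rigidity argument in the other --- but the two steps you yourself flag as delicate are exactly where all the substance lies, and one of them is handled by an argument that cannot work.

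First, confining the eigenvalues of $Q|_{\Delta}$ to the Galois orbit of $\lambda$ (and getting semisimplicity, hence a well-defined internal space $N$) is not ``routine plus bookkeeping'': a priori the characteristic polynomial of the integer matrix $Q|_{\Delta}$ may contain irreducible factors unrelated to $\lambda$, and eliminating them is the content of structure results such as \cite[Theorem 4.1]{PisotFam} (quoted in Section \ref{section_non-existence} of this paper), which give $\Lambda\subset\rho(\Z[Q]\alpha_{1}+\cdots+\Z[Q]\alpha_{J})$; you would need to prove something of this strength, not merely assert it. Second, and more seriously, your proposed exclusion of unit-circle conjugates in the converse direction --- ``primitivity of $\omega$ via a Perron--Frobenius argument relating $|\det Q|$ to the Perron number attached to $M_{\omega}$'' --- fails: Salem numbers are Perron numbers and do arise as expansion factors of primitive FLC self-similar tilings of $\R$ (Kenyon--Thurston), they satisfy every Perron--Frobenius identity your sketch invokes, and yet they have conjugates on the unit circle. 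Since your argument at that step makes no use of the Meyer hypothesis, it would ``prove'' that Salem-expansion tilings have no unit-modulus conjugates, which is false. Boundedness of the $Q_{H_{0}}$-orbits only yields spectral radius at most $1$ on the internal space, i.e.\ a Pisot-or-Salem alternative; upgrading this to the strict inequality demanded by the definition of Pisot family is the hardest point of \cite{LeeSol:08} and genuinely requires the Meyer property (via the harmonic-analytic characterization of Meyer sets through $\varepsilon$-dual characters and relatively dense almost-periods), not primitivity alone.
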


\subsection{Arithmetic progressions in tilings}

In this subsection we define arithmetic progressions in tilings and describe known results for them.

\begin{defi}
      Let $\calT$ be a tiling in $\R^{d}$, $\calP$ a non-empty finite patch,
      $n$ an integer greater than $0$
       and $x$ a vector in $\R^{d}\setminus\{0\}$.
      A set of tiles of the form
      \begin{align*}
             \bigcup_{k=1}^{n}\calP+kx
      \end{align*}
      is called a \emph{(one-dimensional) arithmetic progression of length $n$}.
      A set of tiles of the form
      \begin{align*}
           \bigcup_{k=1}^{\infty}\calP+kx
      \end{align*}
      or
      \begin{align*}
             \bigcup_{k=-\infty}^{\infty}\calP+kx
      \end{align*}
      is called a \emph{(one-dimensional) infinite arithmetic progression}.
      
      Let $\mathcal{B}=\{b_{1},b_{2},\ldots,b_{d}\}$ be a basis of $\R^{d}$,
       which is regarded as a vector space over $\R$.
      A set of tiles of the form
      \begin{align*}
             \bigcup_{(l_{1},l_{2},\cdots ,l_{d})\in\mathbb{Z}^{d}}\calP+\sum_{i=1}^{d}l_{i}b_{i}
      \end{align*}
      is called a \emph{full-rank infinite arithmetic progression}.
\end{defi}

In this paper, we discuss the existence/non-existence of various arithmetic progressions in
various tilings. We first note that the following two lemmas hold:

\begin{lem}\label{equivalence-for-one-dimAP}
       Let $\calT$ be a tiling in $\R^{d}$ that has FLC.
       Let $\calP$ be a non-empty, finite and $\calT$-legal patch and
       take an $x\in\R^{d}\setminus\{0\}$.
       Then the following conditions are equivalent:
       \begin{enumerate}
       \item for each $n>0$, an arithmetic progression $\bigcup_{k=1}^{n}\calP+kx$ is
                 $\calT$-legal.
        \item for some $\calS\in X_{\calT}$, an infinite arithmetic progression
                 $\bigcup_{k=1}^{\infty}\calP+kx$ is $\calS$-legal.
          \item for some $\calS\in X_{\calT}$, an infinite arithmetic progression
                   $\bigcup_{k=-\infty}^{\infty}\calP+kx$ is $\calS$-legal.
       \end{enumerate}
\end{lem}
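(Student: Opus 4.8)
The plan is to prove the cyclic chain of implications $(1)\Rightarrow(2)\Rightarrow(3)\Rightarrow(1)$, using a compactness argument for the first implication and elementary manipulations for the other two.

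First I would prove $(1)\Rightarrow(2)$ by a diagonal/compactness argument in the continuous hull $X_\calT$. Assume that $\bigcup_{k=1}^n\calP+kx$ is $\calT$-legal for every $n>0$. For each $n$ choose $y_n\in\R^d$ with $\bigcup_{k=1}^n(\calP+kx)+y_n\subset\calT$; translating the tiling, this means the patch $\bigcup_{k=1}^n\calP+kx$ occurs in $\calT-y_n\in X_\calT$ around the origin. Now it is convenient to re-center: pick a point $p_0$ in $\supp\calP$ and replace $y_n$ by $y_n'=y_n+p_0$ (or more simply translate so that the $k=1$ copy of $\calP$ sits in a fixed bounded region independent of $n$); since $\calT$ has FLC, the patches $(\calT-y_n')\sqcap B_R$ take only finitely many values up to the small translations allowed by the metric $\rho$, so by compactness of $X_\calT$ (which holds because FLC is equivalent to compactness of the continuous hull) a subsequence of $(\calT-y_n')_n$ converges in the local matching topology to some $\calS\in X_\calT$. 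Because the occurrence of the finite patch $\bigcup_{k=1}^m\calP+kx$ (suitably translated into the fixed bounded region) is a "closed" condition in this topology — more precisely, for each fixed $m$ and large enough $n$ the patches $(\calT-y_n')\sqcap B_{R_m}$ agree on the ball $B_{R_m}$ containing $\bigcup_{k=1}^m\calP+kx$, hence so does the limit $\calS\sqcap B_{R_m}$ — the infinite arithmetic progression $\bigcup_{k=1}^\infty\calP+kx$ is $\calS$-legal. I expect this compactness step, together with carefully bookkeeping the re-centering translations and showing that "contains a given finite patch near the origin" passes to the limit, to be the main obstacle; everything else is routine.

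Next, $(2)\Rightarrow(3)$: suppose $\bigcup_{k=1}^\infty\calP+kx$ is $\calS$-legal for some $\calS\in X_\calT$, say $\bigcup_{k=1}^\infty(\calP+kx)+z\subset\calS$. Then for every $N\ge 1$ the finite patch $\bigcup_{k=1}^N\calP+kx$ is $\calT$-legal (since $X_\calT$ is the orbit closure of $\calT$ and $\calT$ has FLC, any finite $\calS$-legal patch for $\calS\in X_\calT$ is $\calT$-legal). In particular, for each $N$ the two-sided truncation $\bigcup_{k=-N}^{N}\calP+kx$ is a translate of $\bigcup_{k=1}^{2N+1}\calP+kx$, hence $\calT$-legal. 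Now apply the implication $(1)\Rightarrow(2)$ — which we have just proved — but with the vector $x$ replaced by $-x$ and the patch $\calP$ replaced by $\calP' := \bigcup_{k=0}^{\infty}\calP+kx$ is not finite, so instead one argues directly: re-center the two-sided truncations $\bigcup_{k=-N}^N\calP+kx$ so that the $k=0$ copy is in a fixed bounded region, and run the same compactness argument in $X_\calT$ to extract a limit $\calS'\in X_\calT$ in which $\bigcup_{k=-\infty}^{\infty}\calP+kx$ occurs. Thus $(3)$ holds.

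Finally, $(3)\Rightarrow(1)$ is immediate: if $\bigcup_{k=-\infty}^{\infty}(\calP+kx)+z\subset\calS$ for some $\calS\in X_\calT$, then in particular $\bigcup_{k=1}^{n}(\calP+kx)+z\subset\calS$ for every $n$, so each finite arithmetic progression of length $n$ is $\calS$-legal, hence $\calT$-legal by the orbit-closure/FLC remark above. This closes the cycle and proves the equivalence. The only genuinely substantive ingredient is the compactness argument used for the two "finite occurrences force an infinite occurrence in some hull element" steps; one can in fact isolate it as a single lemma ("if finite patches $\calP_1\subset\calP_2\subset\cdots$ with $\bigcup_m\supp\calP_m=\R^d$ or at least unbounded are each $\calT$-legal and share a common point, then $\bigcup_m\calP_m$ is $\calS$-legal for some $\calS\in X_\calT$") and invoke it twice.
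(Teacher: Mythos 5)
Your proposal is correct and follows essentially the same route as the paper: compactness of the FLC hull to pass from arbitrarily long finite progressions (one-sided or two-sided truncations) to an infinite progression in some limit tiling $\calS\in X_{\calT}$, and approximation of $\calS$ by translates of $\calT$ for the converse directions. The only difference is organizational — you arrange the implications as a cycle $(1)\Rightarrow(2)\Rightarrow(3)\Rightarrow(1)$, routing $(2)\Rightarrow(3)$ through the observation that the two-sided truncations are translates of one-sided ones, whereas the paper proves $(1)\Rightarrow(2)$ and $(1)\Rightarrow(3)$ directly and then the reverse implications together; the substance is identical.
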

\begin{proof}
     If the first condition is satisfied, we can take a sequence $x_1,x_2,\ldots\in\R^d$ such that
     for each $n$, the translate $\calT-x_n$ includes $\bigcup_{k=1}^n\calP+kx$.
     By FLC, \textcolor{red}{the hull
     $X_{\calT}$ is compact and}
     we can take a subsequence $(x_{n_j})_j$ such that
     $\calS=\lim_{j\rightarrow\infty}\calT-x_{n_j}$ is convergent. This $\calS$ includes the patch
     $\bigcup_{k=1}^{\infty}\calP+kx$.
     We can also take an $x_n$ such that $\calT-x_n$ includes
     $\bigcup_{k=-n}^n\calP+kx$. By the same argument, we can find an $\calS\in X_{\calT}$ that
     includes $\bigcup_{k=-\infty}^{\infty}\calP+kx$. We have proved that the first condition
     implies both the second and the third condition.
     
     Conversely, if the second or the third condition is satisfied, then for each $n$, the patch
     $\bigcup_{k=1}^n\calP+kx$ is $\calS$-legal. Since $\calS$ can be approximated by translates
     of $\calT$, we see $\bigcup_{k=1}^n\calP+kx$ is $\calT$-legal for each $n$.
\end{proof}

\begin{lem}\label{lem_equivalence_full-rankAP}
        Let $\calT$ be a tiling in $\R^{d}$ that has FLC and is repetitive.
        Let $\calP$ be a non-empty, finite and $\calT$-legal patch
        and $\mathcal{B}=\{b_{1},b_{2},\ldots b_{d}\}$ be a basis of $\R^{d}$.
        Then the following conditions are equivalent:
        \begin{enumerate}
        \item for each $n>0$, the set
                 \begin{align}
                        \bigcup_{(l_{1},l_{2},\ldots ,l_{d})\in\{1,2,\ldots, n\}^{d}}
                        \calP+\sum_{i=1}^{d}l_{i}b_{i}
                        \label{full-rank-finite-patch}
                 \end{align}
                  is a $\calT$-legal patch.
            \item The infinite arithmetic progression
                      \begin{align}
                        \bigcup_{(l_{1},l_{2},\ldots ,l_{d})\in\mathbb{Z}^{d}}
                        \calP+\sum_{i=1}^{d}l_{i}b_{i}
                              \label{full-rank-infinite-AP}
                      \end{align}
                      is $\calS$-legal for any $\calS\in X_{\calT}$.
        \end{enumerate}
\end{lem}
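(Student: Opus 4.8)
The plan is to prove the two implications separately; $(2)\Rightarrow(1)$ is immediate, and the real content lies in $(1)\Rightarrow(2)$, where the repetitivity hypothesis is used in an essential way.

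For $(2)\Rightarrow(1)$ I would argue as follows. Since $\calT\in X_\calT$ (take the zero translation in the definition of the hull), condition $(2)$ applied to $\calS=\calT$ says that the infinite progression \eqref{full-rank-infinite-AP} is $\calT$-legal. The finite patch \eqref{full-rank-finite-patch} is literally a sub-patch of \eqref{full-rank-infinite-AP} (restrict the index set from $\Z^d$ to $\{1,\dots,n\}^d$), and any sub-patch of a $\calT$-legal patch is again $\calT$-legal, so $(1)$ follows at once. (Alternatively one could run \eqref{full-rank-infinite-AP} through the approximation argument from the proof of Lemma~\ref{equivalence-for-one-dimAP}.)

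For $(1)\Rightarrow(2)$ I would first set up notation: write $\Lambda=\bigoplus_{i=1}^d\Z b_i$ for the full-rank lattice spanned by $\calB$ and $\calR_\infty$ for the patch \eqref{full-rank-infinite-AP}, and record the structural fact that $\calR_\infty$ is $\Lambda$-periodic, i.e.\ $\calR_\infty+v=\calR_\infty$ for every $v\in\Lambda$. Since $\calT$ is repetitive and has FLC, $(X_\calT,\R^d)$ is minimal. For each $m>0$ the symmetric cube $\calR_m:=\bigcup_{(l_1,\dots,l_d)\in\{-m,\dots,m\}^d}\calP+\sum_i l_i b_i$ is a translate of the patch \eqref{full-rank-finite-patch} with $n=2m+1$, hence $\calT$-legal by $(1)$; pick $z_m$ with $\calR_m+z_m\subset\calT$. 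By FLC the tilings $\calT-z_m$ lie in the compact hull $X_\calT$, so along a subsequence $\calT-z_{m_j}\to\calS^{*}\in X_\calT$. For each finite $F\subset\Z^d$ the fixed finite patch $\bigcup_{l\in F}\calP+\sum_i l_i b_i$ is contained in $\calR_{m_j}\subset\calT-z_{m_j}$ for all large $j$, and "containing a given fixed finite patch" is a closed condition in the local matching topology (a routine FLC argument: a convergent sequence of tilings eventually agrees with the limit on any fixed ball up to arbitrarily small shifts, and a finite patch has no small translational symmetry, so the shifts must be $0$). Letting $F$ exhaust $\Z^d$ yields $\calR_\infty\subset\calS^{*}$, so $\calR_\infty$ is $\calS^{*}$-legal.

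The remaining step, which I expect to be the main obstacle, is to upgrade "$\calR_\infty$ is $\calS^{*}$-legal" to "$\calR_\infty$ is $\calS$-legal for every $\calS\in X_\calT$" — a priori condition $(1)$ only produces one hull element containing $\calR_\infty$. For this I would consider $Y=\{\calS\in X_\calT\mid\calR_\infty\text{ is }\calS\text{-legal}\}$, which is nonempty ($\calS^{*}\in Y$) and $\R^d$-invariant because legality is translation-invariant, and show that $Y$ is closed. Given $\calS_k\to\calS$ with $\calR_\infty+w_k\subset\calS_k$, the $\Lambda$-periodicity of $\calR_\infty$ lets us subtract a lattice vector from each $w_k$ without changing the inclusion, so we may assume the $w_k$ lie in a fixed bounded fundamental domain of $\Lambda$; passing to a further subsequence $w_k\to w^{*}$, the same tilewise limit argument as before (each tile of $\calR_\infty+w_k$ stays in a fixed ball and converges to the corresponding tile of $\calR_\infty+w^{*}$, which must therefore belong to $\calS$) gives $\calR_\infty+w^{*}\subset\calS$, so $\calS\in Y$. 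Since $(X_\calT,\R^d)$ is minimal, the nonempty closed invariant set $Y$ equals $X_\calT$, which is precisely condition $(2)$. The conceptual point is that minimality together with the lattice-periodicity special to \emph{full-rank} progressions is exactly what spreads legality of $\calR_\infty$ over the whole hull; the only genuinely technical ingredient is the "closed under limits" bookkeeping, used once for fixed finite patches and once, via $\Lambda$-periodicity, for $\calR_\infty$ itself.
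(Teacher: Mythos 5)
Your proposal is correct and follows essentially the same route as the paper: a compactness argument produces one hull element containing the full-rank infinite progression, and then minimality combined with reducing the translation vectors modulo the lattice $\spa_{\Z}\mathcal{B}$ (exploiting the lattice-periodicity of the infinite progression) spreads legality to every element of the hull. Your packaging of the last step as ``the set of hull elements for which the progression is legal is nonempty, closed and invariant, hence all of $X_{\calT}$'' is just a restatement of the paper's direct approximation of an arbitrary $\calR\in X_{\calT}$ by translates of $\calS$ along a subsequence convergent mod the lattice.
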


\begin{proof}
      If the first condition is satisfied, there is an $x_{n}\in\R^{d}$  for $n=1,2,\cdots$ such that
      $\calT-x_{n}$ contains a patch
                       \begin{align}
                        \bigcup_{(l_{1},l_{2},\ldots ,l_{d})\in\{-n,-n+1,\ldots, n\}^{d}}
                        \calP+\sum_{i=1}^{d}l_{i}b_{i}.
                        \label{full-rank-finite-patch}
                 \end{align}
      Since $X_{\calT}$ is a compact metric space, we can take  $n_{1}<n_{2}<n_{3}<\cdots$ such
      that $\calT-x_{n_{j}}$ converges as $j\rightarrow\infty$ to some $\calS\in X_{\calT}$.
      This $\calS$ contains \eqref{full-rank-infinite-AP}. Since the tiling dynamical system
         $(X_{\calT},\R^{d})$  is minimal, for each $\calR\in X_{\calT}$ there is a sequence
         $y_{1},y_{2},\cdots\in\R^{d}$ such that $\lim_{m\rightarrow\infty}\calS+y_{m}=\calR$.
         We can take a subsequence $(y_{m_{j}})_{j}$ that is convergent mod
         $\spa_{\Z}\mathcal{B}$. We still have $\calR=\lim_{j\rightarrow\infty}\calS+y_{m_{j}}$,
         and this $\calR$ contains a translate of the infinite arithmetic progression
         \eqref{full-rank-infinite-AP}.
\end{proof}

\begin{rem}
       Notice that (1)for a repetitive $\calT$,
       the existence/non-existence of a one-dimensional
       finite arithmetic progression
       $\bigcup_{k=1}^n\calP+kx$ does not depend on the choice of $\calS\in X_{\calT}$,
       because $X_{\calT}$ coincides with the local isomorphism class
       \cite[Proposition 5.4]{Baake-Grimm1},
       (2)the existence/non-existence of a
       one-dimensional infinite 
       arithmetic progression $\bigcup_{k=1}^{\infty}\calP+kx$
       may depend on the choice of the tiling from the hull
       (compare Lemma \ref{equivalence-for-one-dimAP}), and
       (3)the existence/non-existence of the full-rank infinite arithmetic progressions does not
       (Lemma \ref{lem_equivalence_full-rankAP}).
        In this paper, we discuss the existence/non-existence of finite arithmetic progressions (1)
         and of full-rank infinite arithmetic progressions (3).
         We discuss the existence and non-existence inside a repetitive fixed point $\calT$ 
         of a substitution which
         we initially consider, but the same existence/non-existence applies for any 
         $\calS\in X_{\calT}$.
\end{rem}

\begin{ex}\label{ex_chair_infinite-AP}
       Let $\omega$ be the chair substitution. 
              For $\calP_{0}$ and $x\neq 0$, as depicted in Figure \ref{figure_infinite-AP},
 there is an arithmetic progression of length
       $2^{k}$ at the $k$-th step of the substitution. This means that the chair tilings satisfy
       the equivalent conditions of Lemma \ref{equivalence-for-one-dimAP} for this $\calP_{0}$ and $x$.
       If we pick an arbitrary finite patch $\calP$ that appears in chair tilings, since a translate of
       $\calP$ is  included in $\omega^{l}(\calP_{0})$ for some $l>0$, we see for $\calP$ and
       $2^{l}x$, there are arbitrarily long (one-dimensional) arithmetic progressions in chair
       tilings. 
       
       \textcolor{red}{We can see that the chair tilings admit full-rank infinite arithmetic progressions
       by considering the modification of the chair tilings into tilings with decorated squares
      \cite[Section 4]{BMS-limit_periodic}.
      The supertiles of the decorated squares have common tiles except for the main diagonals.
      Since the supertiles are aligned in a translate of $2^n\mathbb{Z}^2$, 
      if we chose tiles from supertiles
      that are relatively the same position and are outside the
      main diagonals, we can find a full-rank infinite arithmetic progressions.
       }
\end{ex}

\begin{figure}[htbp]
\begin{center}
\includegraphics{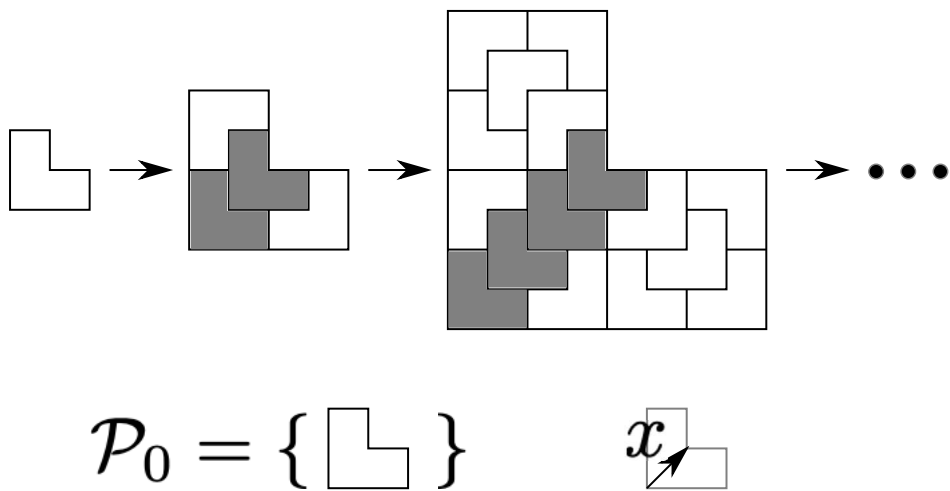}
\caption{Arbitrarily long arithmetic progressions in chair tilings}
\label{figure_infinite-AP}
\label{default}
\end{center}
\end{figure}

\begin{ex}\label{ex_table-substi}
       Let $\omega$ be the table substitution (\cite[Example 6.2]{Baake-Grimm1}). 
       As is seen at the center of Figure \ref{figure_table-tiling}, 
       there are arbitrarily long finite arithmetic progressions
       (in the horizontal direction)
       in the self-similar
       tilings for $\omega$. However, we will see by using
       Theorem \ref{full-rank-infiniteAP_implies_pp} that there
       are no full-rank infinite arithmetic progressions in those tilings, since they do not have
       pure discrete dynamical spectrum (\cite[Example 7.3]{Solomyak_Tiling}).
\end{ex}

\begin{figure}[htbp]
\begin{center}
\includegraphics{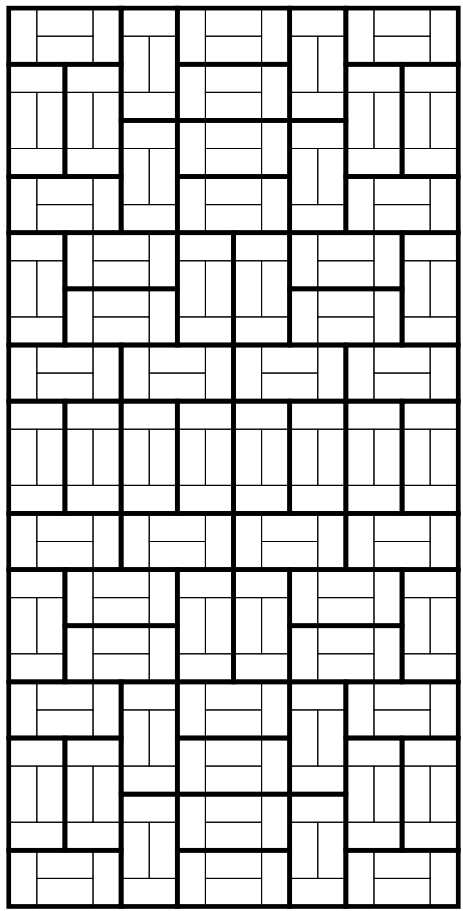}
\caption{A patch generated by the table substitution}
\label{figure_table-tiling}
\label{default}
\end{center}
\end{figure}

To prove the existence of arithmetic progressions in tilings, the following theorem is useful:
\begin{thm}[\cite{Fur},Theorem 2.6]
      Let $X$ be a compact metric space and $T_{1},T_{2},\ldots, T_{l}$ commuting continuous maps
      of $X$ to itself. Then there is a point $x\in X$ and a sequence $n_{k}\rightarrow\infty$ such that
      $T^{n_{k}}_{i}x\rightarrow x$ simultaneously for $i=1,2,\ldots, l$. 
\end{thm}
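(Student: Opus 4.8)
The plan is to deduce this classical topological multiple recurrence statement from the multidimensional van der Waerden theorem (the Gallai--Witt theorem) together with a Baire category argument. Concretely, I would first establish the \emph{approximate} version --- for every $\varepsilon>0$ there exist $x\in X$ and an integer $n\ge 1$ with $d(T_i^{\,n}x,x)<\varepsilon$ for all $i=1,\dots,l$ --- and then upgrade it to the exact statement, including the condition $n_k\to\infty$, by working inside a minimal subsystem. Two harmless preliminary reductions: passing to the natural extension of $(X,T_1,\dots,T_l)$ (a compact metric system on which the induced maps are commuting homeomorphisms and which maps onto $X$) we may assume the $T_i$ are commuting homeomorphisms, since a multiply recurrent point upstairs projects to one in $X$; and passing to a minimal closed $\mathbb{Z}^l$-invariant subset --- which exists by Zorn's lemma --- we may assume the $\mathbb{Z}^l$-action generated by the $T_i$ is minimal, so that every orbit is dense.

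For the approximate version, fix $\varepsilon>0$ and $m\ge 1$, pick any $x_0\in X$, and partition $X$ into finitely many Borel sets $A_1,\dots,A_r$ of diameter $<\varepsilon$. Since the $T_i$ commute, $\Phi(k_1,\dots,k_l):=T_1^{k_1}\cdots T_l^{k_l}x_0$ is a well-defined map $\mathbb{Z}_{\ge 0}^l\to X$; colour $\mathbf{k}\in\mathbb{Z}_{\ge 0}^l$ by the index of the cell containing $\Phi(\mathbf{k})$. The Gallai--Witt theorem applied to the finite configuration $\{0,m e_1,\dots,m e_l\}$ produces $\mathbf{a}\in\mathbb{Z}_{\ge 0}^l$ and $d\ge 1$ with $\Phi(\mathbf{a}),\Phi(\mathbf{a}+dm e_1),\dots,\Phi(\mathbf{a}+dm e_l)$ all lying in one cell. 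Putting $x=\Phi(\mathbf{a})$ and $n=dm\ge m$ and using $\Phi(\mathbf{a}+n e_i)=T_i^{\,n}\Phi(\mathbf{a})$, we obtain $d(T_i^{\,n}x,x)<\varepsilon$ for every $i$, with the return time $n$ as large as we please.

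To upgrade, write $T^{\mathbf{g}}=T_1^{g_1}\cdots T_l^{g_l}$ for $\mathbf{g}=(g_1,\dots,g_l)\in\mathbb{Z}^l$ and, for $\varepsilon>0$, $m\ge 1$, set $R_{\varepsilon,m}=\{x\in X:\exists\,n\ge m\text{ with }\max_i d(T_i^{\,n}x,x)<\varepsilon\}$, which is open. I claim $R_{\varepsilon,m}$ is dense. Given a nonempty open $V\subseteq X$, minimality yields finitely many $\mathbf{g}_1,\dots,\mathbf{g}_p\in\mathbb{Z}^l$ with $\bigcup_{q}T^{\mathbf{g}_q}V=X$, and since each $T^{-\mathbf{g}_q}$ is uniformly continuous we may choose $\eta>0$ so small that $d(a,b)<\eta$ implies $d(T^{-\mathbf{g}_q}a,T^{-\mathbf{g}_q}b)<\varepsilon$ for all $q$. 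Running the colouring argument of the previous paragraph with cells of diameter $<\eta$ gives $x$ and $n\ge m$ with $x$ and $T_1^{\,n}x,\dots,T_l^{\,n}x$ all in one common cell, which lies in some $T^{\mathbf{g}_q}V$; then $x':=T^{-\mathbf{g}_q}x\in V$ and $d(T_i^{\,n}x',x')=d(T^{-\mathbf{g}_q}T_i^{\,n}x,T^{-\mathbf{g}_q}x)<\varepsilon$, so $x'\in R_{\varepsilon,m}\cap V$. By the Baire category theorem $\bigcap_{k\ge 1}R_{1/k,\,k}$ is a dense $G_\delta$, hence nonempty; any $x$ in it admits $n_k\to\infty$ with $\max_i d(T_i^{\,n_k}x,x)<1/k$, i.e. $T_i^{\,n_k}x\to x$ simultaneously, which (pushed back down to the original space) is exactly the claim.

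The step I expect to be the crux is the localisation inside the upgrade: forcing the approximate-recurrence point into a prescribed open set $V$ without losing control of the return error. The naive move --- produce the point by Gallai--Witt and then translate it into $V$ --- fails because a homeomorphism of a general compact metric space need not be Lipschitz, so translation could enlarge the error uncontrollably; the resolution is that minimality makes \emph{finitely many} translates suffice, so one can shrink the partition in advance against that finite list of moduli of continuity. An alternative, more purely dynamical route proves the approximate version by induction on $l$: the case $l=1$ is Birkhoff recurrence, and the inductive step first synchronises the orbits by applying the hypothesis to $S_i=T_iT_l^{-1}$ (making $T_1^{\,n}z,\dots,T_l^{\,n}z$ mutually close) and then telescopes this against minimality to bring the common point back near its start --- with that telescoping being the delicate point of that route.
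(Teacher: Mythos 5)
The paper does not prove this statement: it is imported verbatim from Furstenberg's book as a known black box (it is the Furstenberg--Weiss topological multiple recurrence theorem), so there is no in-paper argument to compare against. Your proof is correct and is the standard ``reverse'' derivation: Furstenberg's own proof is a dynamical induction on $l$ over a minimal system (essentially the alternative route you sketch at the end, synchronising via $T_iT_l^{-1}$) and then \emph{deduces} van der Waerden/Gallai--Witt as a corollary, whereas you take Gallai--Witt as input and convert it into recurrence by colouring an orbit. That direction is legitimate provided Gallai--Witt is supplied with a purely combinatorial proof (e.g.\ via Hales--Jewett); otherwise the argument would be circular. The mechanics all check out: the approximate version with return time $n=dm\ge m$ is right, $R_{\varepsilon,m}$ is open, and your localisation via finitely many translates $T^{\mathbf{g}_q}V$ and a partition refined against their moduli of continuity is exactly the correct fix for the non-Lipschitz issue you identify. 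Two cosmetic points: you should pass to a minimal (hence surjective) subsystem \emph{before} forming the natural extension so the inverse limit is manifestly nonempty and the induced maps are genuinely invertible; and in the upgrade step you only need the \emph{point} $x$ (not the whole cell) to lie in some $T^{\mathbf{g}_q}V$, which is what the finite cover gives you. What your approach buys is brevity and a clean separation of the combinatorics from the topology; what Furstenberg's buys is self-containedness and the bonus that Gallai--Witt falls out as a corollary rather than being assumed.
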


Using this multiple Birkhoff recurrence theorem,  the authors of \cite{Llave-Windsor} proved the
following theorem:
\begin{thm}[\cite{Llave-Windsor}, Theorem 2]\label{thm_Llave-Windsor}
    Let $\calT$ be a tiling in $\R^{d}$ that has FLC.
       Given an $\e>0$ and a finite set $F\subset\R^{d}$, there exist an
       $m\in\Zpo$ and a patch $\calP$ 
       such that
       \begin{enumerate}
       \item the support of $\calP$  covers a ball of radius $1/\e$, and
      \item for each $u\in F$ there is a vector $c\in\R^{d}$ such that $\|c\|<\e$ and
                \begin{align*}
                        \calP+mu+c\subset\calT.
                \end{align*}
       \end{enumerate}
\end{thm}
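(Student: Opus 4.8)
The plan is to feed the tiling dynamical system into the multiple Birkhoff recurrence theorem quoted just above, and then to translate the resulting recurrence statement back into a statement about patches.

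\emph{Setting up commuting maps.} By FLC the continuous hull $X_{\calT}$ is a compact metric space, and it is invariant under translations. Hence, for each $u\in F$, the map $T_{u}\colon X_{\calT}\to X_{\calT}$, $T_{u}\calS=\calS-u$, is well defined and continuous, and $T_{u}T_{v}=T_{u+v}=T_{v}T_{u}$; thus $\{T_{u}\}_{u\in F}$ is a finite family of commuting continuous self-maps of $X_{\calT}$. Applying Theorem 2.6 of \cite{Fur} to this family produces a point $\calS_{0}\in X_{\calT}$ and positive integers $n_{k}\to\infty$ such that $\calS_{0}-n_{k}u=T_{u}^{n_{k}}\calS_{0}\to\calS_{0}$ simultaneously for all $u\in F$.

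\emph{Extracting the patch.} Fix $\delta>0$ with $\delta<\e/2$ and with $1/\delta$ larger than $1/\e$ plus twice the largest diameter of a tile of $\calT$ (finite by FLC), and choose $k$ so large that $\rho(\calS_{0}-n_{k}u,\calS_{0})<\delta$ for all $u\in F$; put $m=n_{k}$ and $\calP_{0}=\calS_{0}\sqcap B_{1/\e}$. Since $\calS_{0}$ is a tiling, $\supp\calP_{0}\supseteq B_{1/\e}$. For each $u\in F$, unwinding the definition of $\rho$ yields vectors $a_{u},b_{u}\in B_{\delta}$ with $(\calS_{0}-mu+a_{u})\sqcap B_{1/\delta}=(\calS_{0}+b_{u})\sqcap B_{1/\delta}$; the choice of $\delta$ forces $\calP_{0}+b_{u}$ to lie inside the left-hand patch, and hence $\calP_{0}+mu+c_{u}\subset\calS_{0}$, where $c_{u}=b_{u}-a_{u}$ satisfies $\|c_{u}\|<\e$.

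\emph{Transferring to $\calT$ and conclusion.} The set $\calQ=\bigcup_{u\in F}(\calP_{0}+mu+c_{u})$ is a finite sub-patch of $\calS_{0}\in X_{\calT}$, hence it is $\calT$-legal, since every finite patch occurring in a tiling of $X_{\calT}$ is $\calT$-legal: such a patch is an arbitrarily good local approximation of a patch of $\calT$, and by FLC an arbitrarily good approximation is already an exact translate. So there is $w\in\R^{d}$ with $\calQ+w\subset\calT$. Setting $\calP=\calP_{0}+w$ we obtain $\calP+mu+c_{u}\subset\calT$ for every $u\in F$ with $\|c_{u}\|<\e$, while $\supp\calP\supseteq B_{1/\e}+w$ is a ball of radius $1/\e$, which is exactly the asserted conclusion. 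The only real work, and the thing that must be done carefully rather than deeply, is the bookkeeping: turning the convergences $\calS_{0}-n_{k}u\to\calS_{0}$ (and the approximation used in the transfer step) into honest inclusions of translated finite patches with translation vectors of norm less than $\e$, which is where the radius $1/\delta$ must be taken large enough to contain $\calP_{0}$ together with the overhang of its boundary tiles; the conceptual content is carried entirely by the multiple Birkhoff recurrence theorem and by the compactness coming from FLC.
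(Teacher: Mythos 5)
Your proof is correct and follows exactly the route the paper indicates for this cited result: feed the commuting translations on the compact hull (compact by FLC) into the multiple Birkhoff recurrence theorem quoted just above, and convert the simultaneous near-returns of the recurrent point into honest patch inclusions with error vectors of norm less than $\e$, transferring back to $\calT$ by legality of finite patches of hull elements. The bookkeeping with $\delta$ and the tile-diameter overhang is handled properly, so nothing further is needed.
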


Given a tiling $\calT$ in $\R^{d}$ of FLC, an $x\in\R^{d}\setminus\{0\}$ and an $n>0$, by applying
Theorem \ref{thm_Llave-Windsor} for $F=\{0, x, 2x, \ldots, (n-1)x\}$ and an $\e>0$, we see
that
there is a patch $\calP$ that covers a ball of radius $1/\e$ and an $m\in\mathbb{Z}_{>0}$ such that
an ``almost arithmetic progression''
\begin{align*}
       \bigcup_{k=1}^{n}\calP+kmx+c_{k}
\end{align*}
is $\calT$-legal. (``Almost'' means that there is an error term $c_{k}$ with $\|c_{k}\|<\e$.)

If we start with a non-empty finite patch $\calQ$ and assume that $\calT$ is repetitive, then
if $\e$ is small enough every patch of $\calT$ that covers a ball of radius $1/\e$ contains a translate
of $\calQ$. If $\e$ is small enough in this sense, then $\calP$ contains a translate $\calQ+y$ of
$\calQ$, and so
\begin{align*}
      \bigcup_{k=1}^{n}\calQ+kmx+c_{k}
\end{align*}
is $\calT$-legal.

If $\calT$ has the 
Meyer property, we will have an ``exact'' arithmetic progression without the error terms.
Assuming that the
 tiling $\calT$ has the Meyer property (page \pageref{def_Meyer-set}),
then there is an $r>0$ such that
\begin{align*}
     (\Xi-\Xi)\cap B_{r}=\{0\},
\end{align*}
where $\Xi=\Xi(\calT)$ is the set of all return vectors (page \pageref{def_return_vector}).
If there is such an $r$, by taking $\e<r/4$ we see
\begin{align*}
           (k+1)mx+c_{k+1}-(kmx+c_{k})=kmx+c_{k}-((k-1)mx+c_{k-1})
\end{align*}
for each $k$, and so by denoting this vector by $z$ we see the patch
\begin{align*}
         \bigcup_{k=0}^{n-1}\calQ+kz
\end{align*}
is $\calT$-legal. We have proved the following corollaries:
\begin{cor}\label{cor_existence_one-dimAP}
      If a tiling $\calT$ is repetitive and has FLC and the Meyer property, then for any finite $\calT$-legal
      patch $\calP$ and $n>0$, there is a $z\in\R^{d}\setminus\{0\}$ such that
      \begin{align}
              \bigcup_{k=1}^{n}\calP+kz\label{AP_for_given_n}
      \end{align}
      is $\calT$-legal.
      The direction $z/\|z\|$ can be chosen arbitrarily close to any given vector of length $1$.
\end{cor}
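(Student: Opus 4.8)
The plan is to organise the three ingredients assembled immediately before the statement into one argument. The first is Theorem~\ref{thm_Llave-Windsor}: for any prescribed error $\e>0$ and any finite list of ``target translations'' it produces a genuine $\calT$-legal patch realising all of them up to errors of norm $<\e$. The second is repetitivity, which turns the (large) patch produced by that theorem into a translate of the given $\calP$. The third is the Meyer property, which collapses the resulting ``almost progression'' into an exact arithmetic progression. Here is how I would carry it out.

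First, fix the unit vector $v$ that $z/\|z\|$ is to approximate, and use the Meyer property (page~\pageref{def_Meyer-set}) to choose $r>0$ with $(\Xi(\calT)-\Xi(\calT))\cap B_{r}=\{0\}$, where $\Xi(\calT)$ is the set of return vectors (page~\pageref{def_return_vector}). Then pick $\e>0$ with $\e<\min\{r/4,\,1/2\}$ and, in addition, small enough that every $\calT$-legal patch whose support covers a ball of radius $1/\e$ contains a translate of $\calP$ (possible since $\calT$ is repetitive and has FLC). Apply Theorem~\ref{thm_Llave-Windsor} with this $\e$ and $F=\{v,2v,\ldots,nv\}$: it yields $m\in\Zpo$, a patch $\calR$ whose support covers a ball of radius $1/\e$, and vectors $c_{1},\ldots,c_{n}$ of norm $<\e$ with $\calR+kmv+c_{k}\subset\calT$ for each $k$. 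Since $\calR$ is then $\calT$-legal and covers a ball of radius $1/\e$, it contains a translate $\calP+y$ of $\calP$, so
\begin{align*}
      \bigcup_{k=1}^{n}\calP+kmv+(c_{k}+y)
\end{align*}
is $\calT$-legal.

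Next I would rigidify. Fix any $T_{0}\in\calP$; for $k=1,\ldots,n-1$ the tiles $T_{0}+kmv+c_{k}+y$ and $T_{0}+(k+1)mv+c_{k+1}+y$ both occur in $\calT$, so each vector $mv+c_{k+1}-c_{k}$ lies in $\Xi(\calT)$. Any two of these differ by a vector of norm $<4\e<r$ lying in $\Xi(\calT)-\Xi(\calT)$, hence they all coincide, with common value $z$ say. Iterating $c_{k+1}-c_{k}=z-mv$ gives $kmv+c_{k}=mv+c_{1}+(k-1)z$, so $\bigcup_{k=1}^{n}(\calP+mv+c_{1}+y)+(k-1)z$ is $\calT$-legal; reindexing and translating, $\bigcup_{k=1}^{n}\calP+kz$ is $\calT$-legal. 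Finally $\|z-mv\|<2\e$ and $\|mv\|=m\ge1$ give $z\neq0$ and $\langle z/\|z\|,\,v\rangle\ge(m-2\e)/(m+2\e)\ge1-4\e$, so the direction of $z$ tends to $v$ as $\e\to0$; choosing $\e$ small enough yields the stated closeness.

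I expect the one genuinely non-routine point to be this last, rigidification, step: one must check that the Meyer bound really does force the $n-1$ \emph{a priori} distinct return vectors $mv+c_{k+1}-c_{k}$ to collapse to a single $z$, and then that this collapse keeps $z/\|z\|$ within $O(\e)$ of $v$ \emph{uniformly} in the integer $m$, over which Theorem~\ref{thm_Llave-Windsor} gives no control; both become immediate once $\e<\min\{r/4,1/2\}$. The production of the almost progression, and the passage from the large Llave--Windsor patch to a copy of $\calP$, are direct appeals to Theorem~\ref{thm_Llave-Windsor} and to standard consequences of repetitivity and FLC.
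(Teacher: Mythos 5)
Your proof is correct and follows essentially the same route as the paper: apply Theorem~\ref{thm_Llave-Windsor} to the finite set of multiples of $v$, use repetitivity to replace the large patch by a translate of $\calP$, and use the Meyer bound $(\Xi-\Xi)\cap B_{r}=\{0\}$ with $\e<r/4$ to collapse the consecutive differences $mv+c_{k+1}-c_{k}$ to a single return vector $z$. You in fact supply slightly more detail than the paper does on the rigidification step and on the final claim that $z\neq0$ and $z/\|z\|$ approximates $v$ uniformly in $m$, both of which the paper leaves implicit.
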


\begin{cor}\label{cor_delaLlave}
        Let $\calT$ be a self-affine
         tiling as in Theorem \ref{thm_Lee-Solomyak_Meyer_property} and assume
        $\spec(Q)$ is a Pisot family. Then for any finite $\calT$-legal patch $\calP$ and $n>0$, there
        is a $z\in\R^{d}\setminus\{0\}$ such that the arithmetic progression \eqref{AP_for_given_n}
        is $\calT$-legal. The direction $z/\|z\|$ can be chosen to be arbitrarily close to any given
        vector of length 1.
\end{cor}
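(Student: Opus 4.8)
The plan is to obtain this corollary as an immediate specialization of Corollary \ref{cor_existence_one-dimAP}, since the genuine work has already been done in the discussion preceding it: Furstenberg's multiple Birkhoff recurrence theorem feeds the de la Llave--Windsor construction of an ``almost arithmetic progression'', after which the Meyer-property trick forces the error vectors $c_k$ to cancel and turns the progression into an exact one. So the only thing I would need to verify is that a self-affine tiling $\calT$ of the type described in Theorem \ref{thm_Lee-Solomyak_Meyer_property}, together with the hypothesis that $\spec(Q)$ is a Pisot family, satisfies all the hypotheses of Corollary \ref{cor_existence_one-dimAP}, namely repetitivity, FLC, and the Meyer property.

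First I would note that repetitivity and FLC are built into the definition of a self-affine tiling recalled in Section \ref{section_notation_known-results}: a repetitive FLC fixed point of a substitution is by definition self-affine, so both properties are automatic. Next, the phrase ``as in Theorem \ref{thm_Lee-Solomyak_Meyer_property}'' is exactly the assumption that $Q$ is diagonalizable over $\C$ and its eigenvalues are algebraic conjugates of a common multiplicity; under this hypothesis Theorem \ref{thm_Lee-Solomyak_Meyer_property} states that the Meyer property is equivalent to $\spec(Q)$ being a Pisot family. Since we are assuming the latter, $\calT$ has the Meyer property.

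With repetitivity, FLC, and the Meyer property all in hand, Corollary \ref{cor_existence_one-dimAP} applies verbatim and produces, for any finite $\calT$-legal patch $\calP$ and any $n>0$, a vector $z\in\R^{d}\setminus\{0\}$ such that the progression \eqref{AP_for_given_n} is $\calT$-legal, with $z/\|z\|$ free to be taken arbitrarily close to any prescribed unit vector. I do not expect any real obstacle: the statement is a repackaging of earlier results, and the only points requiring minor care are matching the hypotheses on $Q$ precisely to those of Theorem \ref{thm_Lee-Solomyak_Meyer_property} and recalling that ``self-affine'' already encodes repetitivity and FLC.
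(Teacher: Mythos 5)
Your derivation matches the paper's: both corollaries are stated as consequences of the de la Llave--Windsor argument combined with the Meyer property, and the paper likewise obtains Corollary \ref{cor_delaLlave} by noting that a self-affine tiling is repetitive and has FLC by definition, while the Pisot family hypothesis yields the Meyer property via Theorem \ref{thm_Lee-Solomyak_Meyer_property}, so that Corollary \ref{cor_existence_one-dimAP} applies directly. The proposal is correct and follows essentially the same route.
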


Recently, this corollary was generalized in \cite{KST}, as follows:
\begin{thm}[\cite{KST}]
      Let $\Lambda$ be a Meyer set in $\R^{d}$. Then, for all positive integers $r$  and $k$, 
      there is an $R>0$ such that, for any coloring of points in $\Lambda$ in $r$ colors and 
      any $x\in\R^{d}$, there is a monochromatic arithmetic progression of length $k$ inside
      $\Lambda\cap B(x,R)$.
\end{thm}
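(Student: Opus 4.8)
The plan is to reduce the colouring statement to a density statement about $\Lambda$ inside large balls, and then to transport that density statement into a one–dimensional progression in the lattice of a model set containing $\Lambda$, where the finitary Szemer\'edi theorem can be applied. To begin, since $\Lambda$ is relatively dense there is a constant $c_{0}>0$ with $\card(\Lambda\cap B(x,R))\geq c_{0}R^{d}$ for all $x\in\R^{d}$ and $R\geq 1$; hence for any $r$–colouring of $\Lambda$ and any ball $B(x,R)$ some colour class $S\subseteq\Lambda$ satisfies $\card(S\cap B(x,R))\geq (c_{0}/r)R^{d}$. It therefore suffices to produce $R_{0}=R_{0}(\Lambda,r,k)$ such that for all $x$, all $R\geq R_{0}$ and all $S\subseteq\Lambda$ with $\card(S\cap B(x,R))\geq (c_{0}/r)R^{d}$, the set $S\cap B(x,R)$ contains an arithmetic progression of length $k$. (Alternatively one can reduce to the uncoloured statement ``$\Lambda$ contains a length–$W(r,k)$ progression in every ball of radius $R_{0}$'', with $W(r,k)$ the van der Waerden number, and then apply van der Waerden's theorem to the restricted colouring; the argument below proves either form.)

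For the transport step, recall that every Meyer set is a relatively dense subset of a model set (one of Meyer's classical characterisations); enlarging the window, we may write $\Lambda\subseteq M:=\{\pi(\ell):\ell\in\tilde L,\ \ell^{\star}\in W\}$ for a cut–and–project scheme $(\R^{d},H,\tilde L)$ with projections $\pi,\star$, with $\pi$ injective on $\tilde L$ and $\star(\tilde L)$ dense in $H$, and with $W$ a compact window with non-empty interior and boundary of (Haar) measure zero. Then $M$ is Delone with $\card(M\cap B(x,R))\asymp R^{d}$ uniformly, so the hypothesis on $S$ translates, after pulling back along $\pi$, into $\card(\tilde S)\geq\delta\,\card(\mathcal B)$ for a constant $\delta=\delta(\Lambda,r)>0$, where $\mathcal B=\{\ell\in\tilde L:\pi(\ell)\in B(x,R),\ \ell^{\star}\in W\}$ and $\tilde S=\{\ell\in\mathcal B:\pi(\ell)\in S\}$. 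Fix a nonzero $w\in\tilde L$ with $\|w^{\star}\|$ as small as we wish (possible since $\star(\tilde L)$ is dense), a length $L$, and consider the progressions $G_{\ell}=\{\ell+aw:0\leq a\leq L\}$ in $\tilde L$. If $L\|w^{\star}\|$ is small enough that such a progression stays in the bulk of $W$, and $R$ is large compared with $L\|\pi(w)\|$, then $G_{\ell}\subseteq\mathcal B$ for all base points $\ell$ in a set of size $\asymp R^{d}$, and any $q\in\mathcal B$ whose $\star$–coordinate avoids a thin shell around $\partial W$ and whose $\pi$–coordinate avoids a thin shell around $\partial B(x,R)$ lies in $G_{\ell}$ for exactly $L+1$ admissible base points $\ell$. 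Since $\partial W$ has measure zero and $R$ is large, all but a small fraction of $\tilde S$ consists of such bulk points, so the double count $\sum_{\ell\ \mathrm{adm.}}\card(\tilde S\cap G_{\ell})=\sum_{q\in\tilde S}\card\{\ell\ \mathrm{adm.}:q\in G_{\ell}\}$ forces the average of $\card(\tilde S\cap G_{\ell})$ over admissible $\ell$ to be at least $(\delta/2)(L+1)$; hence some $G_{\ell^{*}}$ has at least a proportion $\delta/2$ of its points in $\tilde S$.

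Then I would invoke the finitary Szemer\'edi theorem for the arithmetic progression $G_{\ell^{*}}$ of length $L+1$: choosing $L$ large in terms of $\delta$ and $k$ — which only forces a correspondingly small choice of $\|w^{\star}\|$ and of $R_{0}$, all independent of $x$ — the set $\tilde S\cap G_{\ell^{*}}$ contains a $k$–term progression $\ell^{*}+bw,\ \ell^{*}+(b+c)w,\ \dots,\ \ell^{*}+(b+(k-1)c)w$ with $c\geq 1$. Applying $\pi$, which is injective on $\tilde L$, produces $k$ points of $S$ in arithmetic progression with nonzero common difference $c\,\pi(w)$, all inside $B(x,R)$ because $G_{\ell^{*}}\subseteq\mathcal B$; since $S$ is a single colour class, this progression is monochromatic. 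Tracing the dependences, every constant introduced — in particular $R_{0}$ — depends only on $\Lambda$, $r$ and $k$, while $x$ entered only through a translation; this is exactly the required uniformity.

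The step I expect to be the main obstacle is the density transfer in the middle paragraph. Relative density of $\Lambda$ in $\R^{d}$ yields positive density of $\Lambda$ inside $M$, but no control whatsoever on the density of $\Lambda$ along any \emph{particular} one–dimensional sub-pattern of $M$ to which one might want to apply Szemer\'edi — $\Lambda$ could be, for instance, a pseudorandom relatively dense subset of $M$. Averaging over the entire family $\{G_{\ell}\}$ of translated progressions evades this by exhibiting a good progression without naming it in advance; the bookkeeping that makes the average come out positive — re-embedding so that $W$ has measure-zero boundary, controlling the thin shells near $\partial W$ and near $\partial B(x,R)$, and keeping all constants free of $x$ and of the colouring beyond the loss by the factor $r$ — is the technical heart of the argument. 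A purely dynamical alternative would replace the finitary Szemer\'edi input by the Furstenberg–Katznelson multiple recurrence theorem on the compact group $(\R^{d}\times H)/\tilde L$, at the cost of additional work to recover uniformity in $x$.
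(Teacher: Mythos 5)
This theorem is quoted by the paper from \cite{KST} and no proof is given in the text, so there is no internal argument to compare against; I assess your proposal on its own merits and against the argument the paper's surrounding material suggests. Your route --- embed $\Lambda$ into a regular model set via Meyer's theorem, double-count over the translated rank-one progressions $G_{\ell}=\{\ell+aw\}$ to find one meeting the pulled-back colour class $\tilde S$ in proportion $\ge\delta/2$, and finish with the finitary Szemer\'edi theorem --- is sound, and the quantifier order ($\delta$ from relative denseness and $r$; $L$ from Szemer\'edi; then $w$ with $w^{\star}$ small; then $R_{0}$) is consistent. The step you flag as the technical heart, the uniform (in $x$) bound on points whose $\star$-image falls in a thin shell around $\partial W$, is exactly the Weyl/Moody uniform-distribution theorem for regular model sets and is citable; but note it can be bypassed entirely: only the final $k$ points must lie in $\tilde S$ (hence automatically in $\mathcal{B}$ and in $B(x,R)$), the auxiliary progressions $G_{\ell}$ need not stay inside the window, so you may declare $\ell$ admissible when $\ell^{\star}\in W+V$ for a fixed compact thickening $V$ and $\pi(G_{\ell})\subseteq B(x,R)$; then $\card(A)\le C'R^{d}$ follows from uniform discreteness alone and the only error term is the $O(R^{d-1})$ physical boundary shell. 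That said, the whole apparatus is heavier than needed. A short proof in the spirit of the paper's own Corollary \ref{cor_existence_one-dimAP} runs as follows: choose $\e>0$ with $\bigl((\Lambda-\Lambda)-(\Lambda-\Lambda)\bigr)\cap B_{2\e}=\{0\}$ (the Meyer property); tile $\R^{d}$ near $x$ by cubes $v+[0,s)^{d}$, $v\in s\Z^{d}$, with $s$ large enough that each cube meets $\Lambda$, pick $\phi(v)\in\Lambda\cap(v+[0,s)^{d})$, partition $[0,s)^{d}$ into finitely many cells of diameter $<\e$, and colour $v$ by the pair (colour of $\phi(v)$, cell containing $\phi(v)-v$). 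Finitary van der Waerden applied to a long segment of $s\Z^{d}$ yields a monochromatic progression $v+ju$, $0\le j<k$, for this finite colouring; writing $\phi(v+ju)=v+ju+e_{j}$ with all $e_{j}$ in one cell, every second difference $(e_{j+1}-e_{j})-(e_{j}-e_{j-1})$ lies in $\bigl((\Lambda-\Lambda)-(\Lambda-\Lambda)\bigr)\cap B_{2\e}=\{0\}$, so the $\phi(v+ju)$ form an exact monochromatic $k$-term progression in $\Lambda$, inside a ball of radius depending only on $\Lambda$, $r$, $k$. This uses only van der Waerden and uniform discreteness of the second difference set; your appeal to Meyer's embedding theorem, internal-space equidistribution and Szemer\'edi is correct but a substantial detour, and you should at least record the uniform-distribution reference (or the simplification above) to close the one step you leave unproved.
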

This van der Waerden-type theorem is more general than Corollary \ref{cor_delaLlave}
because it is valid for arbitrary Meyer sets and colorings.
Corollary \ref{cor_delaLlave} is obtained from this theorem, except for the last statement
on $z/\|z\|$ in
the corollary, when we apply this theorem to the Meyer set $\{x\in\R^{d}\mid \calP+x\subset\calT\}$.

Corollary \ref{cor_existence_one-dimAP}
proves the existence of arithmetic progressions, \emph{given a $\calP$ and the length
$n$ of the
 arithmetic progression}. Comparing this to Lemma \ref{equivalence-for-one-dimAP}, we then ask:

\begin{prob}\label{question_one-dim-AP}
\textcolor{red}{
Take a tiling $\calT$, a patch $\calP$ and a non-zero return vector $x$.
Decide if the arithmetic progression
     \begin{align}
          \bigcup_{k=1}^{n}\calP+kx\label{AP_given_x}
     \end{align}
      is always 
     $\calT$-legal for every $n>0$.}
\end{prob} 

\textcolor{red}{
For a self-affine tiling, we can check all the patterns which can be observed in ball-shaped
windows of a certain radius $R>0$. If for some $\calP$ and $x$, the arithmetic progressions
always stop in the patterns, then we see the arithmetic progressions stop at a fixed finite step
anywhere in the tiling. However, we can check this only for a certain choice of $R$ and there are
infinite possibilities of $R$. In some cases, we can use induction to show there are arbitrarily long
arithmetic progressions (Example \ref{ex_chair_infinite-AP} and Example \ref{ex_table-substi}) for
some $\calP$ and $x$.
How about the Penrose tilings? We will show that for the Penrose tilings and many self-affine tilings,
any arithmetic progression always stops at a fixed finite step, in Section \ref{section_non-existence}.}

By the same argument as Corollary \ref{cor_delaLlave}, we can prove the following ``full-rank version'':

\begin{thm}
        Let $\calT$ be a tiling as in Theorem \ref{thm_Lee-Solomyak_Meyer_property} and assume
        $\spec(Q)$ forms
         a Pisot family. Then for any finite $\calT$-legal patch $\calP$ and an $n>0$, there
        is a basis $\{b_{1},b_{2},\ldots ,b_{d}\}$ of $\R^{d}$ such that the patch
        \begin{align*}
               \bigcup_{(l_{1},l_{2},\ldots, l_{d})\in\{1,2,\ldots,n\}^{d}}\calP+\sum_{j=1}^{d}l_{j}b_{j}
        \end{align*}
        is $\calT$-legal.
\end{thm}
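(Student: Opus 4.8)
The plan is to run the argument of Corollary~\ref{cor_delaLlave} with a full-rank cube of directions in place of the one-dimensional string $\{0,x,2x,\ldots,(n-1)x\}$. Fix any basis $\{x_{1},x_{2},\ldots,x_{d}\}$ of $\R^{d}$ (say the standard one) and set
\[
      F=\left\{\textstyle\sum_{j=1}^{d}l_{j}x_{j}\ \middle|\ (l_{1},\ldots,l_{d})\in\{0,1,\ldots,n-1\}^{d}\right\}.
\]
Apply Theorem~\ref{thm_Llave-Windsor} to $\calT$ with this $F$ and a small $\e>0$, obtaining an $m\in\Zpo$ and a patch $\calR$ whose support covers a ball of radius $1/\e$ such that for each $l=(l_{1},\ldots,l_{d})\in\{0,\ldots,n-1\}^{d}$ there is a vector $c_{l}$ with $\|c_{l}\|<\e$ and $\calR+m\sum_{j}l_{j}x_{j}+c_{l}\subset\calT$. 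Since $\calP$ is finite and $\calT$-legal and $\calT$ is repetitive (being a self-affine tiling), for $\e$ small enough every patch whose support contains a ball of radius $1/\e$ contains a translate $\calP+y$ of $\calP$; choosing such a translate inside $\calR$ gives
\[
      \calP+m\textstyle\sum_{j}l_{j}x_{j}+(c_{l}+y)\subset\calT\qquad\text{for all }l\in\{0,\ldots,n-1\}^{d}.
\]

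Next I would straighten this almost-grid into an exact one using the Meyer property. By Theorem~\ref{thm_Lee-Solomyak_Meyer_property}, since $\spec(Q)$ is a Pisot family, $\calT$ has the Meyer property, so (as recalled before Corollary~\ref{cor_delaLlave}) there is $r>0$ with $(\Xi(\calT)-\Xi(\calT))\cap B_{r}=\{0\}$; assume from the start $\e<r/4$. Write $p(l)=m\sum_{j}l_{j}x_{j}+c_{l}+y$, so $\calP+p(l)\subset\calT$. For a fixed index $i$ and any $l$ with $l,\,l+e_{i}\in\{0,\ldots,n-1\}^{d}$, the vector $p(l+e_{i})-p(l)=mx_{i}+(c_{l+e_{i}}-c_{l})$ (note that $y$ cancels) is a return vector, because every tile $T$ of $\calP$ satisfies $T+p(l)\in\calT$ and $T+p(l+e_{i})\in\calT$. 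Any two such difference vectors, for $l$ and $l'$, differ by $(c_{l+e_{i}}-c_{l})-(c_{l'+e_{i}}-c_{l'})$, which lies in $(\Xi-\Xi)\cap B_{4\e}\subset(\Xi-\Xi)\cap B_{r}=\{0\}$; hence there is a single vector $b_{i}:=p(e_{i})-p(0)$ with $p(l+e_{i})-p(l)=b_{i}$ for all admissible $l$. Walking from $0$ to any $l$ by unit steps, staying inside the cube, yields $p(l)=p(0)+\sum_{j}l_{j}b_{j}$, so $\bigcup_{l\in\{0,\ldots,n-1\}^{d}}\bigl(\calP+\sum_{j}l_{j}b_{j}\bigr)+p(0)\subset\calT$, and after the shift $l_{j}\mapsto l_{j}-1$ the patch $\bigcup_{(l_{1},\ldots,l_{d})\in\{1,\ldots,n\}^{d}}\calP+\sum_{j}l_{j}b_{j}$ is $\calT$-legal.

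It remains to see that $\{b_{1},\ldots,b_{d}\}$ is a basis. Each $b_{i}=mx_{i}+E_{i}$ with $\|E_{i}\|<2\e$, so with $\{x_{j}\}$ the standard basis the matrix $[b_{1}\,|\,\cdots\,|\,b_{d}]$ equals $mI+E$ with the operator norm of $E$ at most $2\sqrt{d}\,\e$; since $m\geq 1$ this matrix is invertible as soon as $2\sqrt{d}\,\e<1$. Thus one fixes $\e$ at the outset smaller than $r/4$, small enough for the repetitivity step, and smaller than $1/(2\sqrt{d})$, and the argument closes.

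The one point that needs care — exactly as in Corollary~\ref{cor_delaLlave} — is that the integer $m$ delivered by Theorem~\ref{thm_Llave-Windsor} is not under our control, so we must make sure none of the three constraints on $\e$ secretly depends on $m$. They do not: the error bounds $\|c_{l}\|<\e$ hold regardless of $m$, the Meyer separation $r$ is intrinsic to $\calT$, and the invertibility estimate uses only $m\geq 1$. The rest is bookkeeping: checking that the telescoped positions genuinely trace out a combinatorial cube and that the global repetitivity shift $y$ disappears from every consecutive difference.
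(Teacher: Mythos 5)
Your proof is correct and is essentially the paper's argument: the paper proves this theorem by remarking that it follows ``by the same argument as Corollary \ref{cor_delaLlave},'' i.e.\ by applying Theorem \ref{thm_Llave-Windsor} to a full-rank finite configuration and then using the Meyer separation $(\Xi-\Xi)\cap B_{r}=\{0\}$ to rigidify the almost-grid into an exact one. Your write-up supplies exactly those details (including the telescoping and the $mI+E$ invertibility check for the basis claim), so nothing further is needed.
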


       The set $\{b_{1}/\|b_{1}\|,b_{2}/\|b_{2}\|,\ldots ,b_{d}/\|b_{d}\|\}$ can be chosen arbitrarily close
        to any subset of cardinality $d$ of the unit sphere that form a basis of $\R^{d}$, because
        we can take at least one $\{b_1,b_2,\ldots ,b_d\}$ which forms a basis.

Then the natural question is the following (compare Lemma \ref{lem_equivalence_full-rankAP}) :
\begin{ques}\label{question_full-rank-AP}
      \textcolor{red}{
      Which tilings $\calT$ admit full-rank infinite arithmetic progressions, and
      which $\calT$ do not?
      }
\end{ques}

We will address this question for self-affine tilings in Section \ref{section_full-rank-AP}.
Under a certain assumption, the existence of full-rank infinite arithmetic progressions is equivalent to
the pure discrete spectrum of the associated tiling dynamical system, in which case
we can actually prove that the tiling is limit-periodic.

\section{The non-existence of arbitrarily long arithmetic progressions}
\label{section_non-existence}

In this section, we prove several results on the non-existence of arbitrarily long arithmetic progressions
that have a fixed distance,
in a self-affine tiling, and so give a partial answer to Problem \ref{question_one-dim-AP}.

\subsection{An argument using continuous eigenfunctions}

In this subsection we prove Theorem \ref{thm_using_conti_eigenft}, which states that
there are not arbitrarily long arithmetic progressions for a fixed difference $v$, 
if the expansion factor is
irrational and Pisot. 

\begin{thm}\label{thm_using_conti_eigenft}
       Let $\omega$ be a primitive and FLC tiling substitution in $\R^d$ with an expansion factor
     $\lambda$ that is irrational and Pisot. Let $\mathcal{T}$ be a fixed point of
     $\omega$. Assume $\mathcal{T}$ is repetitive. Then for any
      $\mathcal{T}$-legal, nonempty and finite patch $\mathcal{P}$ and
      $v\in\mathbb{R}^d\setminus\{0\}$, there is $n_0\in\mathbb{N}$ such that
      $\bigcup_{n=0}^{n_0}(\mathcal{P}+nv)$ is not $\mathcal{T}$-legal.
\end{thm}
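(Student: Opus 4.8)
The plan is to argue by contradiction: suppose $\bigcup_{n=0}^{n_{0}}(\mathcal{P}+nv)$ is $\mathcal{T}$-legal for every $n_{0}\in\mathbb{N}$, and derive a contradiction from the existence of a suitable continuous eigenfunction. The first move is to \emph{inflate} the progression. Since $Q=\lambda I$, the substitution commutes with translations in the form $\omega(T+x)=\omega(T)+\lambda x$ for any translate $T$ of a proto-tile, so $\omega^{j}\bigl(\bigcup_{n=0}^{n_{0}}(\mathcal{P}+nv)\bigr)=\bigcup_{n=0}^{n_{0}}\bigl(\omega^{j}(\mathcal{P})+\lambda^{j}nv\bigr)$; as $\mathcal{T}$ is a fixed point of $\omega$, the image of a $\mathcal{T}$-legal patch is $\mathcal{T}$-legal, so the finite patch $\omega^{j}(\mathcal{P})$ also admits arbitrarily long arithmetic progressions of difference $\lambda^{j}v$, for every $j$. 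Fix an interior point $p_{0}$ of $\supp\mathcal{P}$ and an $r_{0}>0$ with $B(p_{0},r_{0})\subseteq\supp\mathcal{P}$; then $\supp\omega^{j}(\mathcal{P})=\lambda^{j}\supp\mathcal{P}$ contains a ball of radius $R_{j}:=\lambda^{j}r_{0}$, and $R_{j}\to\infty$. By Lemma \ref{equivalence-for-one-dimAP} (using FLC and compactness of $X_{\mathcal{T}}$), for each $j$ there is $\mathcal{S}_{j}\in X_{\mathcal{T}}$ such that $\bigcup_{k\ge 1}(\omega^{j}(\mathcal{P})+\lambda^{j}kv)$ is $\mathcal{S}_{j}$-legal; absorbing the relevant translation, I may assume a translate $\mathcal{P}_{j}$ of $\omega^{j}(\mathcal{P})$ with $B_{R_{j}}\subseteq\supp\mathcal{P}_{j}$ and $\bigcup_{k\ge 0}(\mathcal{P}_{j}+\lambda^{j}kv)\subseteq\mathcal{S}_{j}$.

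Next comes a metric estimate together with the spectral input. For every $k\ge 0$ the tiling $\mathcal{S}_{j}-\lambda^{j}kv$ contains $\mathcal{P}_{j}$, and every tile of a tiling that meets $B_{R_{j}-\delta}$ (where $\delta$ bounds the diameters of tiles) has its support inside $B_{R_{j}}\subseteq\supp\mathcal{P}_{j}$ and hence belongs to $\mathcal{P}_{j}$; so all the tilings $\mathcal{S}_{j}-\lambda^{j}kv$ ($k\ge0$) agree on $B_{R_{j}-\delta}$, giving $\rho\bigl(\mathcal{S}_{j}-\lambda^{j}kv,\ \mathcal{S}_{j}\bigr)\le 1/(R_{j}-\delta)$ for all $k$ once $j$ is large. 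For the eigenfunction, note that since $\lambda$ is a Pisot number the one-element set $\{\lambda\}$ is a Pisot family, so Theorem \ref{thm_Lee-Solomyak_Meyer_property} (with $Q=\lambda I$) shows $\mathcal{T}$ has the Meyer property; by the known description of the spectrum of self-affine tilings whose expansion has Pisot-family spectrum, the set of topological eigenvalues of $(X_{\mathcal{T}},\mathbb{R}^{d})$ is relatively dense, in particular spans $\mathbb{R}^{d}$. I therefore choose a topological eigenvalue $a$ with $\langle v,a\rangle\neq 0$ and a continuous eigenfunction $f$; since $\mathcal{T}$ is repetitive the system is minimal, so $|f|$ is constant and we normalize $|f|\equiv 1$, and the identity $f(\mathcal{S}-x)=e^{2\pi i\langle x,a\rangle}f(\mathcal{S})$ then holds for all $\mathcal{S}\in X_{\mathcal{T}}$ and $x\in\mathbb{R}^{d}$. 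Letting $\eta$ be a modulus of uniform continuity of $f$ on the compact space $X_{\mathcal{T}}$, the estimate above yields, for every $j$ large and every $k\ge 0$,
\begin{align*}
       \bigl|e^{2\pi i\,k\lambda^{j}\langle v,a\rangle}-1\bigr|
       =\bigl|f(\mathcal{S}_{j}-\lambda^{j}kv)-f(\mathcal{S}_{j})\bigr|
       \le\eta\!\left(\tfrac{1}{R_{j}-\delta}\right).
\end{align*}

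Since the right-hand side tends to $0$ as $j\to\infty$, while $\sup_{k\ge 0}|e^{2\pi i k\theta}-1|\ge\sqrt{3}$ for every real $\theta\notin\mathbb{Z}$ (an elementary estimate on roots of unity and dense orbits), we conclude $\lambda^{j}\langle v,a\rangle\in\mathbb{Z}$ for all sufficiently large $j$. Fixing such a $j$ and putting $m=\lambda^{j}\langle v,a\rangle$, both $m$ and $\lambda m=\lambda^{j+1}\langle v,a\rangle$ lie in $\mathbb{Z}$, and $m\neq 0$ because $\langle v,a\rangle\neq 0$; hence $\lambda=\lambda m/m\in\mathbb{Q}$, contradicting the irrationality of $\lambda$. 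This contradiction shows that some finite progression $\bigcup_{n=0}^{n_{0}}(\mathcal{P}+nv)$ fails to be $\mathcal{T}$-legal, which is the assertion.

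The inflation step, the metric estimate and the closing number-theoretic argument are all routine. The substantive point — and the place where the \emph{Pisot} hypothesis is used — is the spectral input: the existence of a continuous eigenfunction whose eigenvalue is not orthogonal to the prescribed difference vector $v$, which rests on the known richness (relative density, and continuity of the eigenfunctions) of the spectrum of self-affine tilings with the Meyer property. The irrationality of $\lambda$, by contrast, enters only in the final line.
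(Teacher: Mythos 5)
Your proof is correct and follows essentially the same route as the paper's: inflate the progression by $\omega^{j}$, pick a continuous eigenfunction $f$ for a topological eigenvalue $a$ with $\langle v,a\rangle\neq 0$ (the paper cites Solomyak directly for a basis of topological eigenvalues, you route through the Meyer property — same external input), observe that $f$ is nearly constant on the set of tilings containing the large patch $\omega^{j}(\calP)$ (your metric estimate is exactly the content of Lemma \ref{lem_existence_R}), and derive a contradiction from the rotation numbers $\lambda^{j}\langle v,a\rangle$. The only cosmetic difference is the endgame: you conclude $\lambda^{j}\langle v,a\rangle\in\Z$ for two consecutive $j$ and hence $\lambda\in\Q$, whereas the paper notes that $\lambda^{m}\langle v,a\rangle$ is irrational for large $m$ so that $f(A_{\omega^{m}(\calP)})$ has diameter greater than $1/2$ — the same use of the irrationality of $\lambda$.
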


\begin{rem}
      If we drop the assumption that $\lambda$ is irrational, there is a counterexample for the 
      statement, as we have shown in Example \ref{ex_chair_infinite-AP}.
      We do not know if there is a counterexample if we drop the assumption that $\lambda$
      is Pisot. For the one-dimensional case, there is no arbitrarily long arithmetic progressions if
      $\lambda$ is irrational (Theorem \ref{thm_complete_picture_for_self-similar}).
      \textcolor{red}{Note that $\calT$ is non-periodic because the expansion factor is irrational so that
      the frequency for a tile is irrational.}
\end{rem}

We divide the proof into several parts, as follows.
First we define $A_{\calP}$, which will play an important role in the proof.

\begin{defi}
       Let $\calT$ be a tiling and $\calP$ be a patch. We consider a set
       \begin{align*}
               A_{\calP}=\{\calS\in X_{\calT}\mid \calP\subset\calS\}.
       \end{align*}
       Although this is dependent on $X_{\calT}$, we denote it just by $A_{\calP}$, since in this subsection
       $\calT$ is fixed.
\end{defi}

For each $\calS\in X_{\calT}$, we have $\calS\supset\calP$ if and only if $\calS\in A_{\calP}$.

Note that we have defined the notation $\sci$ in \eqref{def_sci_cutting-off} in page \pageref{def_sci_cutting-off}.
In the next lemma, the important point is that we can take $R$ that is independent of $x$.

\begin{lem}\label{lem_existence_R}
      Let $\mathcal{T}$ be a tiling of $\mathbb{R}^d$
      which has FLC.
      Let $f\colon X_{\mathcal{T}}\rightarrow\mathbb{T}$ be a continuous eigenfunction.
      Then for any $\varepsilon>0$ there is an $R>0$ such that the conditions
      \begin{itemize}
       \item $\mathcal{S}_1,\mathcal{S}_2\in X_{\mathcal{T}}$,
	 \item $x\in\mathbb{R}^d$, and
	  \item $\mathcal{S}_1\sci B(x,R)=\mathcal{S}_2\sci B(x,R)$
      \end{itemize}
       imply $|f(\mathcal{S}_1)-f(\mathcal{S}_2)|<\varepsilon$.
       Therefore, given a continuous eigenfunction $f$ and an $\e>0$,  if $R>0$ is large enough and
       a patch $\calP$ covers a ball of radius $R$, then
      the diameter of $f(A_{\calP})$ is less than $\e$.
\end{lem}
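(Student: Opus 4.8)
The plan is to prove the uniform statement about continuous eigenfunctions first, and then deduce the diameter estimate as an easy corollary. For the main part, I would argue by contradiction. Suppose $\varepsilon>0$ is such that no $R$ works. Then for each $n$ there exist $\mathcal{S}_1^{(n)},\mathcal{S}_2^{(n)}\in X_{\mathcal{T}}$ and $x_n\in\R^d$ with $\mathcal{S}_1^{(n)}\sci B(x_n,n)=\mathcal{S}_2^{(n)}\sci B(x_n,n)$ yet $|f(\mathcal{S}_1^{(n)})-f(\mathcal{S}_2^{(n)})|\geq\varepsilon$. The key point is that we can translate: replace $\mathcal{S}_i^{(n)}$ by $\mathcal{S}_i^{(n)}-x_n$, which moves the agreement region to $B(0,n)=B_n$, and since $f$ is an eigenfunction with eigenvalue $a$, the translated functions differ by the unimodular factor $e^{2\pi i\langle x_n,a\rangle}$; in particular $|f(\mathcal{S}_1^{(n)}-x_n)-f(\mathcal{S}_2^{(n)}-x_n)|=|f(\mathcal{S}_1^{(n)})-f(\mathcal{S}_2^{(n)})|\geq\varepsilon$. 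Here I need to be slightly careful: the eigenfunction equation $f(\mathcal{S}-x)=e^{2\pi i\langle x,a\rangle}f(\mathcal{S})$ holds only $\mu$-almost everywhere for each fixed $x$, but since $f$ is \emph{continuous} and the $\R^d$-action is minimal (or at least the orbit of $\mathcal{T}$ is dense), the identity in fact holds everywhere on $X_{\calT}$; this is a standard fact I would invoke or prove in a line.

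Now by FLC the hull $X_{\calT}$ is compact, so after passing to a subsequence we may assume $\mathcal{S}_1^{(n)}-x_n\to\mathcal{S}_1$ and $\mathcal{S}_2^{(n)}-x_n\to\mathcal{S}_2$ in the local matching topology. I would check that $\mathcal{S}_1=\mathcal{S}_2$: the agreement $\mathcal{S}_1^{(n)}\sci B_n=\mathcal{S}_2^{(n)}\sci B_n$ (after translation) together with $n\to\infty$ forces the limits to agree on every fixed ball $B_\rho$, hence $\mathcal{S}_1=\mathcal{S}_2$. (One should use here that the local matching topology built from $\sci$ agrees with the one built from $\sqcap$ on a large space of patches, as noted in the excerpt, or argue directly that agreement on large balls in the $\sci$-sense passes to the limit.) But then by continuity of $f$ we get $|f(\mathcal{S}_1)-f(\mathcal{S}_2)|\geq\varepsilon$ while $\mathcal{S}_1=\mathcal{S}_2$, a contradiction. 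This establishes the existence of $R$.

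For the final assertion, given a continuous eigenfunction $f$ and $\e>0$, choose $R$ as above for $\varepsilon=\e/2$ (or just $\e$). If a patch $\calP$ covers a ball of radius $R$, say $B(x_0,R)\subset\supp\calP$, then for any $\mathcal{S}_1,\mathcal{S}_2\in A_{\calP}$ we have $\calP\subset\mathcal{S}_1$ and $\calP\subset\mathcal{S}_2$, so $\mathcal{S}_1$ and $\mathcal{S}_2$ both contain all tiles of $\calP$; in particular $\mathcal{S}_1\sci B(x_0,R)$ and $\mathcal{S}_2\sci B(x_0,R)$ both contain the tiles of $\calP$ meeting $B(x_0,R)$, and since $B(x_0,R)$ lies inside $\supp\calP$ these two cut-offs coincide (every tile of $\mathcal{S}_i$ whose support lies in $B(x_0,R)$ is forced to be a tile of $\calP$ because $\calP$ already tiles a neighborhood). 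Hence $|f(\mathcal{S}_1)-f(\mathcal{S}_2)|<\e$ for all $\mathcal{S}_1,\mathcal{S}_2\in A_{\calP}$, i.e.\ $\diam f(A_{\calP})\leq\e$ (or $<2\e$, which is all that will be needed).

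The main obstacle I anticipate is the bookkeeping around the cutting-off operation $\sci$: one must make sure that ``$\calP$ covers a ball of radius $R$'' really does pin down $\mathcal{S}\sci B(x_0,R)$ for every $\mathcal{S}\in A_{\calP}$, and that convergence in the local matching topology interacts correctly with $\sci$-agreement on growing balls. This is where I would spend the most care; the compactness and contradiction skeleton is routine, and the eigenfunction-factor cancellation is immediate once the everywhere-validity of the eigenvalue equation for a continuous eigenfunction is granted.
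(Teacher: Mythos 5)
Your proposal is correct and rests on the same two ingredients as the paper's proof: the unimodular eigenfunction factor makes $|f(\mathcal{S}_1)-f(\mathcal{S}_2)|$ invariant under simultaneous translation (reducing to balls centered at the origin), and compactness of the FLC hull upgrades continuity of $f$ to the uniform statement. The paper phrases this directly (uniform continuity gives $R$ for $x=0$, then translate), whereas you package the same content as a compactness-and-contradiction argument; the difference is cosmetic.
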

\begin{proof}
      Since $f$ is continuous,
      there is an $R>0$ such that if $\mathcal{S}_1,\mathcal{S}_2\in X_{\mathcal{T}}$ and
      \begin{align*}
           \mathcal{S}_1\sci B(0,R)=\mathcal{S}_2\sci B(0,R),
      \end{align*}
      then $|f(\mathcal{S}_1)-f(\mathcal{S}_2)|<\varepsilon$.
      In order to prove the statement of this lemma, take
      $\mathcal{S}_1,\mathcal{S}_2\in X_{\mathcal{T}}$ and an $x\in\mathbb{R}^d$, and
     assume
      \begin{align*}
           \mathcal{S}_1\sci B(x,R)=\mathcal{S}_2\sci B(x,R).
      \end{align*}
       Then
      \begin{align*}
            (\mathcal{S}_1-x)\sci B(0,R)=(\mathcal{S}_2-x)\sci B(0,R),
      \end{align*}
      and so $|f(\mathcal{S}_1-x)-f(\mathcal{S}_2-x)|<\varepsilon$.
       Since we have
       \begin{align*}
       |f(\mathcal{S}_1)-f(\mathcal{S}_2)|=
     |e^{2\pi i\langle a,x\rangle}f(\mathcal{S}_1)-
	 e^{2\pi i\langle a,x\rangle}f(\mathcal{S}_2)|	=
	|f(\mathcal{S}_1-x)-f(\mathcal{S}_2-x)|,
       \end{align*}
      we have proved the first claim.

       If $\calP$ is a patch that covers $B(x,R)$ for some $x\in\R^{d}$ and
       $\calS_{1},\calS_{2}\in A_{\calP}$, then $\calS_{1}\sci B(x,R)=\calS_{2}\sci B(x,R)$, and so
       $|f(\calS_{1})-f(\calS_{2})|<\e$. This means that
       the diameter of $f(A_{\calP})$ is less than $\e$.
 \end{proof}

\begin{proof}[The proof for Theorem \ref{thm_using_conti_eigenft}]
     Let $\{b_1,b_2,\ldots b_d\}$ be a basis of $\mathbb{R}^d$ consisting of
     topological eigenvalues for $(X_{\mathcal{T}},\mathbb{R}^d)$.
     Such a basis exists by \cite[the last part of page 13]{Solomyak_eigenfunction}.
     
     For a $\calP$ and a $v$ as in the statement of the theorem, there is an $i$ such that
     $\langle b_i,v\rangle\neq 0$. Let $f$ be a $\mathbb{T}$-valued
     continuous eigenfunction for $b_{i}$.
     
     To prove the statement, let us assume on the contrary that for each $n>0$, the patch
     $\bigcup_{k=0}^{n}\calP+kv$ is $\calT$-legal. We will show this leads to a contradiction.
     Since $\omega(\calT)=\calT$, for each $n>0$ and $m>0$, the patch
     $\calP_{n,m}=\bigcup_{k=0}^{n}\omega^{m}(\calP)+\lambda^{m}kv$ 
     is $\calT$-legal, which means that
     there is an $\calS_{n,m}\in X_{\calT}$ such that $\calS_{n,m}\supset\calP_{n,m}$.
     By the definition of $A_{\omega^{m}(\calP)}$, for each $n,m>0$ and $k=0,1,\ldots ,n$, we have
     \begin{align*}
             \calS_{n,m}-\lambda^{m}kv\in A_{\omega^{m}(\calP)}.
     \end{align*}
     This implies that
     \begin{align*}
              f(\calS_{n,m}-\lambda^{m}kv)=e^{2\pi i\langle b_{i},\lambda^{m}kv\rangle}f(\calS_{n,m})\in
              f(A_{\omega^{m}(\calP)}).
     \end{align*}
     If $m$ is large enough, then
      the number $\langle b_{i},\lambda^{m}v\rangle$ is irrational, and so the diameter of 
      $f(A_{\omega^{m}(\calP)})$ is larger than $1/2$. 
      On the other hand, by Lemma \ref{lem_existence_R}, if $m$ is large enough, then the
      diameter of $f(A_{\omega^{m}(\calP)})$ can be arbitrarily small. We have a contradiction and have proved
      the theorem.
%
\end{proof}

\begin{ex}
      The Penrose tilings are essentially the same (MLD) as the Robinson triangle tilings, which are 
      generated by a substitution with an expansion factor $\frac{1+\sqrt{5}}{2}$.
      This is an irrational Pisot number and, by Theorem \ref{thm_using_conti_eigenft}, we see that
      there are no arbitrary long arithmetic progressions when we fix $\calP$ and $v$, 
      \textcolor{red}{both in the Penrose tilings and the Robinson triangle tilings.}
\end{ex}

\subsection{An argument using an internal space}

In this subsection, we introduce another method to show the non-existence of  arbitrarily long
 arithmetic progressions 
in a given self-affine tiling (Theorem \ref{thm_internal_space}).
The argument relies on \cite[Theorem 4.1]{PisotFam} and is valid for a more general class of
self-affine tilings, whereas Theorem \ref{thm_using_conti_eigenft} is valid only for self-similar
tilings.

Let $\Tk$ be a self-affine tiling with an alphabet $\calA=\{T_1,T_{2},\ldots T_{m}\}$,
an expansion map $Q$ and a primitive FLC substitution map $\omega$.
We assume $Q$ is diagonalizable
over $\mathbb{C}$ and
all eigenvalues are algebraic conjugates of the same multiplicity $J$.
We select control points (page \pageref{def_control-point})
to obtain a Delone set
$\Lam=\bigcup_{i=1}^m \Lam_i$, where each $\Lambda_i$ is the set of all
control points of the tiles of type $i$.

Since $Q$ is diagonalizable and the multiplicities of the eigenvalues of $Q$ are always $J$,
if the real eigenvalues are $\lambda_1,\lambda_2,\ldots,\lambda_s$ and
imaginary eigenvalues are $\mu_1=a_1+b_1\sqrt{-1},\overline{\mu_1}=a_1-b_1\sqrt{-1},\ldots,
\mu_t=a_t+b_t\sqrt{-1},\overline{\mu_t}=a_t-b_t\sqrt{-1}$,
there is a basis $\mathcal{B}$ of $\R^d$ such that
the matrix $[Q]_{\mathcal{B}}$ of $Q$ with respect to $\mathcal{B}$ is
\begin{align*}
       [Q]_{\mathcal{B}}=
       \begin{bmatrix}
	     A & 0 & 0&\cdots &0 \\
	     0 & A & 0 & \cdots & 0\\
	     \vdots & & \ddots & &\vdots \\
	     0 & \cdots &   & & A
       \end{bmatrix},
\end{align*}
where
\begin{align*}
      A=
      \begin{bmatrix}
            \lambda_1& 0         & 0      &\cdots    &    &   &      &   &0\\
            0        & \lambda_2 & 0      &\cdots    &    &   &      &   &0\\
            \vdots   &           & \ddots &          &    &   &      &   &0\\
                     &           &        &\lambda_s &    &   &      &   &\\
                     &           &        &          &a_1 &-b_1&\cdots&  &\\
                     &           &        &          &b_1 &a_1&      &   &\\
                     &           &        &          &    &   &\ddots&   &\\
            0        &           &        &          &    &   &      &a_t&-b_t\\
            0        &           &        &          &    &   &      &b_t&a_t
      \end{bmatrix}.
\end{align*}
(This is the real canonical form of $Q$.)
Let $\alpha_j$ ($j=1,2,\ldots, J$) be the vector of $\R^d$ such that the representation
with respect to the basis $\mathcal{B}$ is
\begin{align*}
       (0,0,\ldots,0,\underbrace{1,1,\ldots 1,}_{\text{from $(j-1)m+1$ to $jm$}} 0,\ldots 0)
\end{align*}
where $m=s+2t$. 

By \cite[Theorem 4.1]{PisotFam}, there exists an isomorphism $\rho:\R^d\to \R^d$
such that $\rho Q=Q \rho$ and
\begin{align*}
\Lam \subset \rho( \Z[Q] \alpha_1 + \dots +\Z[Q] \alpha_J) .
\end{align*}
This is the first tool that we use in Theorem \ref{thm_internal_space}.

The other tool is the following.
By \cite[Lemma 6.5]{Solomyak_Tiling}, there 
exists a finite set $F\subset \Lambda-\Lambda$ such that
for any element $x \in\Lambda-\Lambda$, there are an $n>0$ and $w_1,w_2,\ldots,w_n\in F$
with
\begin{align} \label{translation-vector-form}
 x=\sum_{i=0}^{n} Q^{i} w_i.
\end{align}

In fact, $x=c(T)-c(S)$ for some tiles $T$ and $S$ in $\calT$. By repetitivity, we may
assume that $T$ and $S$ are both in $\omega^n(U)$ for some $n>0$ and $U\in\calT$.
There are $T^{(0)}=U, T^{(1)}\in\omega(T^{(0)}), T^{(2)}\in\omega(T^{(1)}),\ldots$
such that
$T=T^{(n)}\in\omega(T^{(n-1)})$. We have
\begin{align*}
        c(T)=\sum_{k=0}^{n-1}Q^{n-1-k}(c(T^{(k+1)}))-Q^{n-k}(c(T^{(k)}))+Q^n(c(T^{(0)})).
\end{align*}
There are also $S^{(0)}=U, S^{(1)}\in\omega(S^{(0)}), S^{(2)}\in\omega(S^{(1)}),\ldots$
such that
$S=S^{(n)}\in\omega(S^{(n-1)})$. We have
\begin{align*}
        c(S)=\sum_{k=0}^{n-1}Q^{n-1-k}(c(S^{(k+1)}))-Q^{n-k}(c(S^{(k)}))+Q^n(c(S^{(0)})).
\end{align*}
Therefore,
\begin{align*}
      c(T)-c(S)=\sum_{k=0}^{n-1}Q^{n-1-k}(c(T^{(k+1)})-Q(c(T^{(k)})))
      -\sum_{k=0}^{n-1}Q^{n-1-k}(c(S^{(k+1)})-Q(c(S^{(k)}))).
\end{align*}
Each of $c(T^{(k+1)})-Q(c(T^{(k)}))$ and $c(S^{(k+1)})-Q(c(S^{(k)}))$ is
a difference of tiles inside a level-one supertile.

With these machinery, we can prove the following theorem.
The idea is to use a kind of internal space.
For a model set $D$ \cite[Chapter 7]{Baake-Grimm1} with a Euclidean internal space,
if there are arbitrarily long arithmetic progressions $x, x+y, x+2y,\ldots x+ny\in D$ with 
\textcolor{red}{a
fixed} $y\neq 0$, 
then since $y^*\neq 0$, for a large $n$,
the corresponding point $x^*+ny^*$ should be outside the window and
we have a contradiction.
Here, we construct a similar Euclidean ``internal space'' and have a similar argument to obtain
a contradiction if there are arbitrarily long arithmetic progressions.

\begin{thm}\label{thm_internal_space}
Let $\Tk$ be a self-affine tiling of $\R^d$ with an expansion map $Q$ and a primitive FLC
substitution rule $\omega$.
Suppose $Q$ is diagonalizable over $\mathbb{C}$ and all the eigenvalues are algebraic
 conjugates with the same multiplicity $J$.
 If there exists an algebraic conjugate $\beta$
of eigenvalues of $Q$ such that $|\beta|<1$, 
then for any return vector
$x\in\mathbb{R}\setminus\{0\}$, there is an $n_0\in\mathbb{N}$ such that
$T+nx\notin\Tk$ for all $n>n_0$ and $T\in\Tk$.
 Therefore, for each $T\in\calT$ and $x\neq 0$, there is an $n_0>0$ such that
   $\bigcup_{k=1}^{n_0}T+kx$ is not $\Tk$-legal.
\end{thm}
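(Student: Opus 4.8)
The plan is to mimic the cut-and-project ``star map'' argument sketched in the paragraph preceding the theorem, replacing the genuine internal space of a model set by an algebraically-defined one built from the two tools just introduced: the inclusion $\Lambda\subset\rho(\Z[Q]\alpha_1+\dots+\Z[Q]\alpha_J)$ coming from \cite[Theorem 4.1]{PisotFam}, and the fact that every element of $\Lambda-\Lambda$ has the form $x=\sum_{i=0}^n Q^i w_i$ with $w_i$ in a fixed finite set $F\subset\Lambda-\Lambda$, from \cite[Lemma 6.5]{Solomyak_Tiling}. Fix the algebraic conjugate $\beta$ of the eigenvalues of $Q$ with $|\beta|<1$. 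First I would reduce the geometric statement to an arithmetic one: if $\bigcup_{k=1}^{n}T+kx$ is $\calT$-legal for every $n$, then, picking control points, the vector $x$ is a return vector and $kx\in\Lambda-\Lambda$ for all $k\ge 1$ (more precisely, for each $k$ there are tiles $T_k,S_k$ of $\calT$ with $c(T_k)-c(S_k)=kx$, after translating into a common supertile by repetitivity). So it suffices to show that $\{kx : k\ge 1\}$ cannot all lie in $\Lambda-\Lambda$ when $x\ne 0$.

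Next I would construct the internal map. Because $Q$ is diagonalizable with all eigenvalues algebraic conjugates of common multiplicity $J$, there is a ring homomorphism (Galois-type embedding) $\sigma\colon\Z[Q]\to\C$ (or into $\C^J$, diagonalizing the $J$ copies) sending the eigenvalue-block of $Q$ to $\beta$; extending $\rho$-compatibly one gets a $\Z$-linear ``star map'' $(\cdot)^\star\colon \rho(\Z[Q]\alpha_1+\dots+\Z[Q]\alpha_J)\to V$, where $V$ is a finite-dimensional real vector space on which $Q$ acts (via the star) as a contraction: $\|Q^\star\|<1$, because the relevant conjugate is $\beta$ with $|\beta|<1$. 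Concretely, for $v=\rho(\sum_j p_j(Q)\alpha_j)$ with $p_j\in\Z[t]$, set $v^\star=\big(p_j(\beta)\big)_j$ (or its real form), and note $(Qv)^\star=\beta\, v^\star$ so that iterating $Q$ shrinks stars. Now apply the star map to the expansion $x=\sum_{i=0}^n Q^i w_i$: we get $x^\star=\sum_{i=0}^n \beta^i\, w_i^\star$. Since $F$ is finite, $M:=\max_{w\in F}\|w^\star\|<\infty$, hence $\|x^\star\|\le M\sum_{i\ge 0}|\beta|^i = M/(1-|\beta|) =: C$, a bound \emph{uniform over all of $\Lambda-\Lambda$}. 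This is the analogue of ``the window is bounded.''

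Finally I would derive the contradiction. Since $x\ne 0$ and $\rho$, $\sigma$ are injective on the relevant module — here one needs $x^\star\ne 0$, which follows because $\sigma$ is a field/ring embedding and the representation of $x$ in $\Z[Q]\alpha_1+\dots+\Z[Q]\alpha_J$ is nonzero, so some conjugate, in particular the $\beta$-conjugate, is nonzero (this uses that $\beta$ is a genuine conjugate, not an arbitrary number) — we have $\|x^\star\|=:\delta>0$, and by $\Z$-linearity $\|(kx)^\star\|=k\delta$. But $kx\in\Lambda-\Lambda$ for every $k$, so the uniform bound gives $k\delta\le C$ for all $k\ge 1$, which is absurd for $k>C/\delta$. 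Hence $\bigcup_{k=1}^{n}T+kx$ fails to be $\calT$-legal for $n$ large, and in particular $T+nx\notin\calT$ for all large $n$; taking $n_0=\lceil C/\delta\rceil$ makes this explicit.

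I expect the main obstacle to be the careful construction of the star map and the verification that $x^\star\ne 0$: one has to set up the homomorphism $\Z[Q]\to\C$ so that it is simultaneously (i) well-defined despite $Q$ satisfying its minimal polynomial, (ii) contracting with ratio exactly $\beta$ on the eigen-directions, and (iii) injective enough that a nonzero module element has nonzero star. The multiplicity-$J$ hypothesis and the explicit real canonical form of $Q$ (with the vectors $\alpha_j$) are exactly what make (i)--(iii) work, and the decomposition $\Lambda-\Lambda\subset\rho(\bigoplus_j\Z[Q]\alpha_j)$ from \cite[Theorem 4.1]{PisotFam} is what lets us even speak of stars of return vectors; the rest — the geometric series bound and the linear blow-up $k\delta$ — is routine. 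A secondary subtlety is checking that a return vector $x$ really yields $kx\in\Lambda-\Lambda$ for all $k$ rather than just $x\in\Lambda-\Lambda$, which is where one invokes that an arithmetic progression of length $n+1$ based at a tile $T$ provides the pairs $(T+kx, T)$ inside $\calT$ and hence, after choosing control points and translating to a common supertile, the relations $c(T+kx)-c(T)=kx$.
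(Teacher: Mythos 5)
Your proposal is correct and follows essentially the same route as the paper: the paper's map $\Phi$ sending $\sum_j q_j(Q)\alpha_j$ to $(q_j(\beta))_j$ is exactly your star map, the boundedness of $\Phi(\Lambda)$ is obtained from the finite set $F$ and the geometric series in $|\beta|<1$ just as you describe, and the contradiction comes from the linear growth of $n\Phi(x)$ against that bound. The injectivity point you flag ($x^\star\neq 0$ for $x\neq 0$, via the conjugates sharing a minimal polynomial) is precisely the one the paper asserts when it calls $\Phi$ injective.
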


\begin{proof}
Without loss of generality, we may assume that $\rho$ is the identity.

For each $q_1(x),q_2(x),\cdots q_J(x)\in\mathbb{Z}[x]$, the representation of
 the vector $q_1(Q)\alpha_1+\cdots +q_J(Q)\alpha_J$ with respect to the basis
 $\mathcal{B}$ is
 \begin{align*}
      \begin{bmatrix}
            q_1(A)\boldsymbol 1\\
            q_2(A)\boldsymbol 1\\
            \vdots   \\
            q_J(A)\boldsymbol 1
      \end{bmatrix}
 \end{align*}
 where $\boldsymbol 1\in\R^m$ is the vector whose entries are all 1.
 Define a well-defined injective map $\Phi\colon \mathbb{Z}[Q]\alpha_1+\cdots+\mathbb{Z}[Q]\alpha_J\rightarrow\mathbb{C}^J$ by
 \begin{align*}
      \begin{bmatrix}
            q_1(A)\boldsymbol 1\\
            q_2(A)\boldsymbol 1\\
            \vdots   \\
            q_J(A)\boldsymbol 1
      \end{bmatrix}
      \mapsto
      \begin{bmatrix}
             q_1(\beta)\\
             q_2(\beta)\\
             \vdots\\
             q_J(\beta)
      \end{bmatrix}.
 \end{align*}

 Since the set $F$ in (\ref{translation-vector-form}) is finite and $|\beta|<1$, we
can observe that the image of $\Lambda$ by $\Phi$ is bounded. If $x\in\Lambda-\Lambda$ and $x\neq 0$,
 then since $\Phi(x)\neq 0$, there is an $n_0$ such that $\|\Phi(n_0x)\|=\|n_0\Phi(x)\|$ is
 greater than the diameter of $\Phi(\Lambda)$. This implies that if  $y\in\Lambda$, then
 for each $n>n_0$, we have
$\Phi(y+nx)\notin\Phi(\Lambda)$ and
 $y+nx\notin\Lambda$. 
\end{proof}

\section{The existence and non-existence of full-rank infinite arithmetic progressions}
\label{section_full-rank-AP}

In this section, we investigate Question \ref{question_full-rank-AP} for self-affine
tilings in $\R^{d}$.
In particular, we prove that, under additional assumptions, the existence of full-rank
infinite arithmetic progressions implies the pure discrete spectrum of the tiling
(Theorem \ref{full-rank-infiniteAP_implies_pp}), and the converse
(Corollary \ref{cor_case_Xi_generates_lattice},
 Corollary \ref{cor_general_sufficient_condition_existence_full-rankAP},
 Theorem \ref{thm_self-similar_inf-fullrank-AP}).
Then the conclusion is strengthened by
showing that the tiling is in fact limit periodic (Theorem \ref{thm_limit_periodic},
Corollary \ref{cor_sufficient_conditions_for_limit-period}).

 \subsection{Full-rank  infinite arithmetic progressions imply pure discrete spectrum}
The following setting is assumed throughout the subsection.
\begin{setting}\label{setting_infap_implies_pp}
     In this subsection, $\calT$ is a self-affine tiling with an expansion map $Q$ and
     a primitive FLC substitution map $\omega$.
\end{setting}

The key notion here is the overlap coincidence, which is under an mild assumption equivalent to
pure discrete spectrum.

\begin{defi}[\cite{Solomyak_Tiling}] \label{def-overlap}  A triple $(T,y,S)$,
with $T,S \in \calT$ and $y \in \Xi(\calT)$, is called an {\em overlap}
if the intersection
$\supp(y+T)\cap \supp(S)$ has a non-empty interior.
We say that two overlaps $(T,y,S)$ and $(T',y',S')$ are {\em equivalent} if
for some $g\in \R^d$ we have $y+T = g+y'+T',\ S = g+S'$. Denote by
$[(T,y,S)]$ the equivalence class of an overlap. 
An overlap $(T,y,S)$ is a
{\em coincidence} if $y+T = S$. The {\em support} of an overlap
$(T,y,S)$ is $\supp(T,y,S) = \supp(y+T)\cap \supp(S)$.
\end{defi}

We define the {\em subdivision graph $\Gk_{\Ok}(\calT)$ for overlaps}.
Its vertices are the equivalence classes of overlaps for $\calT$. Let $\Ok=(T,y,S)$ be
an overlap.  We will specify directed edges
leading from the equivalence class $[\Ok]$. 
$\om(y+T) = Q y+\om(T)$ is a patch of $Q y+\calT$, and $\om(S)$ is a $\calT$-patch,
and moreover,
$$
\supp(Q y+\om(T)) \cap \supp(\om(S)) = Q (\supp(T,y,S)).
$$
For each pair of tiles $T'\in \om(T)$ and $S'\in \om(S)$ such that
$\Ok':= (T',Q y,S')$ is an overlap, we draw a directed edge from $[\Ok]$ to
$[\Ok']$.

The following equivalence is useful when we discuss overlaps.

\begin{lem}
      For an overlap $\mathcal{O}=(T,y,S)$, the following conditions are equivalent:
      \begin{enumerate}
      \item there is an $n>0$ such that $\omega^{n}(T+y)\cap\omega^{n}(S)\neq\emptyset$;
        \item from the equivalence class $[\mathcal{O}]$ there is a path that leads to the equivalence
        class of a coincidence.
      \end{enumerate}
\end{lem}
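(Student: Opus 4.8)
The plan is to prove the two implications separately, both times exploiting the directed-edge structure of the subdivision graph $\Gk_{\Ok}(\calT)$, whose edges were constructed precisely so that an edge $[\Ok]\to[\Ok']$ records the relation $\Ok'=(T',Qy,S')$ with $T'\in\om(T)$, $S'\in\om(S)$ and $\supp(\om(T')+ \ldots)$... more precisely so that $\supp(\Ok')\subset Q(\supp\Ok)$. The key observation I would isolate first is that, because $\supp(Qy+\om(T))\cap\supp(\om(S))=Q(\supp(T,y,S))$ and $\om(T)$, $\om(S)$ tile $Q(\supp T)$, $Q(\supp S)$ respectively without interior overlaps, every overlap $\Ok'$ supported on a subset of $Q(\supp\Ok)$ with nonempty interior arises as $(T',Qy,S')$ for a unique pair $T'\in\om(T)$, $S'\in\om(S)$, and conversely the supports $\supp\Ok'$ for the out-neighbours $[\Ok']$ of $[\Ok]$ cover $Q(\supp\Ok)$ up to measure zero. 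Equivalently, iterating the edge relation $n$ times, the out-neighbours at distance $n$ from $[\Ok]$ have supports covering $Q^n(\supp\Ok)$ up to a null set, and each such support is $\supp(T^{(n)},Q^ny,S^{(n)})$ for tiles $T^{(n)}\in\om^n(T)$, $S^{(n)}\in\om^n(S)$; moreover a vertex on such a path is the class of a coincidence exactly when the corresponding $T^{(n)}+Q^n y=S^{(n)}$.

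For the implication (2)$\Rightarrow$(1): suppose there is a path from $[\Ok]$ of length $n$ to the class of a coincidence $\Ok^{(n)}=(T^{(n)},Q^n y,S^{(n)})$ with $T^{(n)}+Q^n y=S^{(n)}$, where (unwinding the edges, using the injectivity observation above) $T^{(n)}\in\om^n(T)$ and $S^{(n)}\in\om^n(S)$. Since $\om^n(\calT)=\calT$ we have $\om^n(T)\subset\calT$ and $\om^n(S)\subset\calT$, and the tile $T^{(n)}$ appears in $\om^n(T+y)=Q^n y+\om^n(T)$ shifted by $Q^n y$; the coincidence condition says this shifted tile equals $S^{(n)}\in\om^n(S)$. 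Hence $\om^n(T+y)\cap\om^n(S)$ contains the common tile $S^{(n)}$ (viewing both as patches, they literally share that tile), so it is nonempty, giving (1). The only care needed is bookkeeping of which tiles in $\om^n(T+y)$ and $\om^n(S)$ the path selects, which the injectivity/covering observation makes routine.

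For the converse (1)$\Rightarrow$(2): assume $\om^n(T+y)\cap\om^n(S)\neq\emptyset$, i.e.\ there is a tile $U$ lying in both $Q^n y+\om^n(T)$ and $\om^n(S)$; write $U=Q^n y+T^{(n)}=S^{(n)}$ with $T^{(n)}\in\om^n(T)$, $S^{(n)}\in\om^n(S)$. Then $\supp(T^{(n)},Q^n y,S^{(n)})=\supp(Q^n y+T^{(n)})\cap\supp S^{(n)}=\supp U$ has nonempty interior, so $(T^{(n)},Q^n y,S^{(n)})$ is a genuine overlap, and it is a coincidence. By the covering observation, $\supp U\subset Q^n(\supp\Ok)$ up to a null set, and in fact there is a chain $T=T^{(0)}\in\calT$, $T^{(1)}\in\om(T^{(0)})$, $\dots$, $T^{(n)}\in\om(T^{(n-1)})$ and similarly $S=S^{(0)},S^{(1)},\dots,S^{(n)}$ with each $(T^{(k)},Q^k y,S^{(k)})$ an overlap (its support is nonempty because it contains $Q^{-(n-k)}\supp U$ up to measure zero, which has nonempty interior since $Q$ is expansive). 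These overlaps give exactly a directed path $[\Ok]=[(T^{(0)},y,S^{(0)})]\to[(T^{(1)},Qy,S^{(1)})]\to\cdots\to[(T^{(n)},Q^n y,S^{(n)})]$ in $\Gk_{\Ok}(\calT)$ ending at a coincidence, which is (2).

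The main obstacle I expect is not conceptual but combinatorial: making precise the claim that an edge out of $[\Ok]$ corresponds bijectively to choosing a pair of tiles $(T'\in\om(T),\,S'\in\om(S))$ whose images overlap, and that this is compatible with passing to equivalence classes (different representatives of $[\Ok]$ must give the same set of out-neighbour classes). This amounts to checking that the map from overlaps-supported-in-$Q(\supp\Ok)$ to pairs of subtiles is well defined and equivariant under the $\R^d$-translations defining overlap equivalence, and that "has nonempty interior" is preserved under $Q$ and $Q^{-1}$ — which it is, since $Q$ and $Q^{-1}$ are homeomorphisms. Once that bookkeeping is in place, both directions are short.
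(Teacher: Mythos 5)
Your argument is correct. Note that the paper states this lemma without proof (it is presented as an immediate unwinding of the definition of the subdivision graph $\Gk_{\Ok}(\calT)$), and your proof is exactly that intended unwinding: in one direction a common tile $U=Q^ny+T^{(n)}=S^{(n)}$ of $\om^n(T+y)$ and $\om^n(S)$ determines chains $T^{(k)}\in\om(T^{(k-1)})$, $S^{(k)}\in\om(S^{(k-1)})$ whose triples $(T^{(k)},Q^ky,S^{(k)})$ are genuine overlaps because their supports contain $Q^{-(n-k)}(\supp U)$, yielding a path to a coincidence; in the other direction a path unwinds to such chains and the terminal coincidence supplies the common tile. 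The only cosmetic blemishes are the unnecessary ``up to measure zero'' qualifiers (the containment $Q^{-(n-k)}\supp U\subset\supp(T^{(k)},Q^ky,S^{(k)})$ is exact) and the covering observation, which is true but not actually needed for either implication.
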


We also use the following two lemmas concerning overlaps.

\begin{lem}[\cite{LMS}, Lemma A.8]    \label{equivclass_overlap_finite}
    Assume that $\Xi(\calT)$ is a Meyer set.
     Then the number of equivalence classes of overlaps for $\calT$ is finite.
\end{lem}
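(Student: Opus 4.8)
The plan is to show that every equivalence class of overlaps is determined by a triple of discrete invariants, each of which ranges over a finite set. Fix a radius $R_{0}$ with every tile of $\calT$ contained in a ball of radius $R_{0}$, recall that FLC forces $\calT$ to have only finitely many tiles up to translation, and fix control points as on page~\pageref{def_control-point}, obtaining the Delone set $\Lambda=\bigcup_{i}\Lambda_{i}$ in which a tile of $\calT$ is recovered from its translation class (its ``type'') together with its control point. To an overlap $\mathcal{O}=(T,y,S)$ I attach the triple $\bigl(\mathrm{type}(T),\mathrm{type}(S),v_{\mathcal{O}}\bigr)$, where $v_{\mathcal{O}}=\bigl(c(T)+y\bigr)-c(S)$ records the control point of $T+y$ relative to that of $S$. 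Checking against Definition~\ref{def-overlap}: if $(T,y,S)$ and $(T',y',S')$ are equivalent via a vector $g$, then $c(S)=g+c(S')$ and $c(T)+y=g+c(T')+y'$, so the two triples agree; conversely two overlaps with the same triple are equivalent, by translating the second through $g=c(S)-c(S')$. Thus the triple is a complete invariant of the equivalence class, and since there are finitely many types it remains only to prove that $V=\{\,v_{\mathcal{O}}\mid\mathcal{O}\text{ an overlap}\,\}$ is finite.

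First, $V$ is bounded: $\supp(T+y)\cap\supp(S)$ is non-empty, so $c(T)+y$ and $c(S)$ both lie within $2R_{0}$ of a common point and $\|v_{\mathcal{O}}\|\le 4R_{0}$. Next, $v_{\mathcal{O}}$ lies in an iterated sum–difference set of $\Xi(\calT)$: a return vector has the form $c(P+y)-c(P)$ for a suitable $P\in\calT$ (using that control points of translationally equivalent tiles sit in the same relative position), so $\Xi(\calT)\subseteq\Lambda-\Lambda$; since also $c(T)-c(S)\in\Lambda-\Lambda$ trivially, we get $v_{\mathcal{O}}=\bigl(c(T)-c(S)\bigr)+y\in(\Lambda-\Lambda)+(\Lambda-\Lambda)$. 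Finally, fixing one base point in each $\Lambda_{i}$, the difference of two control points splits as a sum of two differences of control points of equal type — each a return vector — plus a difference of base points, so $\Lambda-\Lambda\subseteq\Xi(\calT)+\Xi(\calT)+F_{0}$ for a finite set $F_{0}$; hence $v_{\mathcal{O}}$ lies in a fixed finite sumset of $\Xi(\calT)$ translated by a finite set.

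Now I invoke the Meyer hypothesis: for a Meyer set $M$ every finite sum–difference set is again uniformly discrete — for instance because a Meyer set embeds into a model set, whose finite sums and differences are again model sets with enlarged windows; equivalently $M-M\subseteq M+F$ for a finite $F$, and one iterates this. Applying this to $\Xi(\calT)$, the set in which $v_{\mathcal{O}}$ lies meets the ball $B_{4R_{0}}$ in only finitely many points, so $V$ is finite, and the number of equivalence classes of overlaps for $\calT$ is finite (bounded by $\#V$ times the square of the number of tile types). I expect the genuine obstacle to be exactly the control of $v_{\mathcal{O}}$ in the previous paragraph: FLC alone does not suffice, because $v_{\mathcal{O}}$ is a bounded value of the twice-iterated difference set $(\Lambda-\Lambda)+(\Lambda-\Lambda)$, which FLC governs only one step at a time, so a priori infinitely many such values could accumulate in a bounded region; it is precisely the Meyer property (cf. page~\pageref{def_Meyer-set}) that forbids this by keeping all iterated difference sets of $\Xi(\calT)$ uniformly discrete.
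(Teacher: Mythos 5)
The paper does not actually prove this lemma --- it is imported verbatim from \cite{LMS}, Lemma A.8 --- so there is no internal proof to compare against; your argument is correct and is essentially the standard one behind the cited result: an equivalence class of overlaps is faithfully encoded by the two tile types together with the control-point displacement $v_{\mathcal{O}}=(c(T)+y)-c(S)$, that displacement lies in a ball of fixed radius inside a finite sumset of $\Xi(\calT)$ (translated by a finite set of base-point differences), and the Meyer property forces that intersection to be finite. The one step worth making explicit is that the last part genuinely requires the Lagarias characterization $\Xi-\Xi\subseteq\Xi+F$ with $F$ finite --- the paper's stated definition of the Meyer property only gives uniform discreteness of $(\Xi-\Xi)-(\Xi-\Xi)$ directly, and the equivalence with the $\Xi+F$ form uses that $\Xi(\calT)$ is relatively dense, which here comes from repetitivity; once that is granted, $\Xi+\Xi+\Xi\subseteq\Xi+F+F$ is a finite union of uniformly discrete sets and meets $B_{4R_{0}}$ in finitely many points, exactly as you argue.
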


We set
\begin{align}
      D_{x}=\calT\cap (\calT+x)
      \label{def_Dx}
\end{align}
for a vector $x\in\R^{d}$.

The following lemma is slightly different from \cite[Lemma A.9]{LMS}, but the proof is the same.
\begin{lem} [\cite{LMS}, Lemma A.9]\label{equivalence-to-coincidence}
Assume that $\Xi(\calT)$ is a Meyer set.
Let $x\in \Xi(\calT)$. The following are equivalent:

 {\em (i)} $\lim_{n\to\infty} \dens(D_{Q^n x})=1$;

 {\em (ii)} $1-\dens(D_{Q^n x}) \le Cr^n$, $n\ge 1$,
for some $C > 0$ and $r\in (0,1)$;

 {\em (iii)} From each vertex $\Ok$ of the graph $\Gk_{\Ok}(\calT)$
such that $\Ok=[(T,Q^kx,S)]$ for some $T,S\in\calT$ and $k>0$,
 there is a path
leading to a coincidence.
\end{lem}

\begin{thm} [\cite{Solomyak_Tiling},Theorem 6.1] \label{sufficient-condition-ppd}
Assume that
$\Xi(\calT)$ is a Meyer set.
If there exists a basis $\mathcal{B}$ for $\R^d$ such that for all $x \in \mathcal{B}$,
\[  \sum_{n=0}^{\infty} (1 - \dens(D_{Q^n x})) < \infty \,, \]
then the tiling dynamical system $(X_{\calT}, \mu, \R^d)$ has pure discrete spectrum.
\end{thm}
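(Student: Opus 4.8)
Since the statement is \cite[Theorem 6.1]{Solomyak_Tiling}, one option is simply to invoke it; below I outline the argument one would run from the material already assembled. The plan is to first turn the summability hypothesis into overlap-coincidence data carrying a geometric rate, and then feed that into a renormalization of the spectral measures that kills the continuous part of the spectrum.

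First I would unwind the hypothesis. For each $b\in\mathcal B$, $\sum_n(1-\dens(D_{Q^nb}))<\infty$ forces $1-\dens(D_{Q^nb})\to 0$, which is condition (i) of Lemma \ref{equivalence-to-coincidence}; hence conditions (ii) and (iii) hold as well, yielding constants $C_b>0$ and $r_b\in(0,1)$ with $1-\dens(D_{Q^nb})\le C_br_b^n$ and, for every overlap class of the form $[(T,Q^kb,S)]$, a path in $\Gk_{\Ok}(\calT)$ leading to a coincidence. Using the elementary facts $\dens(D_{-y})=\dens(D_y)$ and $1-\dens(D_{y+z})\le(1-\dens(D_y))+(1-\dens(D_z))$ — immediate from translation invariance of the density and the inclusion $\calT\setminus D_{y+z}\subseteq(\calT\setminus D_y)\cup((\calT\setminus D_z)+y)$ — this geometric-rate estimate extends, with the obvious bound $1-\dens(D_z)\le\sum_{b,n}|k_{b,n}|C_br_b^n$, to every $z=\sum_{b,n}k_{b,n}Q^nb$; combined with the self-affine structure already recorded (the representation \eqref{translation-vector-form} of return vectors and the lattice $\rho(\Z[Q]\alpha_1+\dots+\Z[Q]\alpha_J)$ containing $\Lam$ from \cite[Theorem 4.1]{PisotFam}) this covers all the vectors needed below.

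Next, the spectral side. A total set in $L^2(\mu)$ is provided by indicators of cylinder sets based on legal patches (a patch placed within a small ball around a prescribed position) and their $\R^d$-translates; writing $(U_xg)(\calS)=g(\calS-x)$, such an $f$ has $\widehat{\sigma_f}(x)=\langle U_xf,f\rangle$. Applying the substitution $\omega$ to the patches defining these cylinders produces a relation expressing $\langle U_{Qx}f,f\rangle$ as a finite linear combination of correlations $\langle U_xf',f''\rangle$, with $f',f''$ ranging over a fixed finite family — the finiteness being exactly Lemma \ref{equivclass_overlap_finite}, as $\Xi(\calT)$ is Meyer — up to a defect term whose size at scale $n$ is $O(1-\dens(D_{Q^nb}))$, coming from the regions where two overlapping level-$n$ supertiles fail to coincide. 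Iterating this renormalization along the basis directions and invoking the geometric decay from the first step, one bounds the mass of the continuous part of $\sigma_f$ by a convergent series of defects, which can be made arbitrarily small; hence each $\sigma_f$ is pure point, and since these $f$ form a total set, $(X_\calT,\mu,\R^d)$ has pure discrete spectrum.

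The main obstacle is the precise set-up of the renormalization relation and the defect estimate for a general expansion map $Q$ rather than a scalar $\lambda I$: one must work in the real canonical form of $Q$, track the vectors $\alpha_1,\dots,\alpha_J$, and use the subdivision graph of overlaps together with its finiteness to ensure that the coincidence paths produced in the first step actually absorb every defect arising under iteration. This closure-and-absorption bookkeeping is the technical heart, and it is carried out in \cite[Theorem 6.1]{Solomyak_Tiling}.
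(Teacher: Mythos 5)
This theorem is quoted in the paper directly from \cite{Solomyak_Tiling} (Theorem 6.1) with no proof given, so your primary move --- invoking that reference --- is exactly what the paper does, and your accompanying sketch (turning the summability into $L^2$-defect estimates via the overlap/coincidence structure and concluding pure pointness over a total family of cylinder functions) is a fair outline of the argument that lives in that reference. Since the substantive ``closure-and-absorption bookkeeping'' is, as you say yourself, deferred to \cite{Solomyak_Tiling}, there is nothing further to check here.
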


By these two results we have the following corollary:
\begin{cor}\label{cor_sufficient_condition_for_purepoint}
      Assume that $\Xi(\calT)$ is a Meyer set.
      Let $\mathcal{B}$ be a basis of $\R^{d}$ that is included in $\Xi(\calT)$.
      Suppose that for each overlap $(S,Q^{k}y,T)$ with $y\in\mathcal{B}$ and $k>0$, 
      there is a path from $[(S,Q^{k}y,T)]$ to
      a coincidence (that is, there is an $n>0$ such that
       $\omega^{n}(S+Q^{k}y)\cap\omega^{n}(T)\neq\emptyset$).
       Then the dynamical system $(X_{\calT},\mu,\R^{d})$ is pure discrete.
\end{cor}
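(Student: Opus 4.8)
The plan is to deduce Corollary~\ref{cor_sufficient_condition_for_purepoint} by combining Lemma~\ref{equivalence-to-coincidence} with Theorem~\ref{sufficient-condition-ppd}. The only thing that needs doing is to translate the hypothesis ``every overlap of the form $[(S,Q^k y,T)]$ with $y\in\mathcal{B}$, $k>0$, leads to a coincidence'' into the summability condition $\sum_{n=0}^\infty(1-\dens(D_{Q^n x}))<\infty$ for every $x\in\mathcal{B}$ that Theorem~\ref{sufficient-condition-ppd} asks for.

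First I would fix $x\in\mathcal{B}$. The hypothesis says that for every $k>0$ and every pair $S,T\in\calT$ for which $(S,Q^k x,T)$ is an overlap, the vertex $[(S,Q^k x,T)]$ of the subdivision graph $\Gk_{\Ok}(\calT)$ admits a directed path to a coincidence; this is exactly condition (iii) of Lemma~\ref{equivalence-to-coincidence} for the vector $x$. Note here that $\mathcal{B}\subset\Xi(\calT)$, so each $x\in\mathcal{B}$ is a legitimate return vector to which the lemma applies, and the standing Meyer hypothesis on $\Xi(\calT)$ is in force, so Lemma~\ref{equivalence-to-coincidence} is available. Applying the equivalence (iii)$\Rightarrow$(ii) of that lemma gives constants $C_x>0$ and $r_x\in(0,1)$ with $1-\dens(D_{Q^n x})\le C_x r_x^n$ for all $n\ge 1$. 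Summing the geometric series then yields $\sum_{n=0}^\infty(1-\dens(D_{Q^n x}))<\infty$.

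Since this holds for each of the finitely many $x\in\mathcal{B}$, the basis $\mathcal{B}$ witnesses the hypothesis of Theorem~\ref{sufficient-condition-ppd}, and we conclude that $(X_\calT,\mu,\R^d)$ has pure discrete spectrum. The parenthetical remark in the statement — that a path to a coincidence is the same as the existence of $n>0$ with $\omega^n(S+Q^k y)\cap\omega^n(T)\neq\emptyset$ — is just the unnamed lemma immediately preceding (the equivalence of the two formulations of ``reaching a coincidence''), so it needs no separate argument. There is essentially no obstacle here: the content is entirely in Lemma~\ref{equivalence-to-coincidence} and Theorem~\ref{sufficient-condition-ppd}, and the corollary is the routine bookkeeping step of chaining them together; the only point requiring a moment's care is checking that the hypotheses of Lemma~\ref{equivalence-to-coincidence} (Meyer property of $\Xi(\calT)$, $x$ a return vector) are met for every $x\in\mathcal{B}$, which they are by assumption.
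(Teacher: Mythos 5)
Your proof is correct and follows exactly the route the paper intends: the corollary is stated as an immediate consequence of Lemma~\ref{equivalence-to-coincidence} (using the implication (iii)$\Rightarrow$(ii) to get geometric decay, hence summability) together with Theorem~\ref{sufficient-condition-ppd}. The paper leaves this chaining implicit, and your write-up supplies precisely that bookkeeping.
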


%

\begin{thm}\label{full-rank-infiniteAP_implies_pp}
Suppose that there exist a tile $T_0 \in \calT$ and a lattice $L$ on $\R^d$
 such that $\{T_0 + v \ | \ v \in L\}  \subset \calT$ and $Q(L)\subset L$.
Then $(X_{\calT}, \mu, \R^d)$ has pure discrete spectrum.
\end{thm}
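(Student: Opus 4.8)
The plan is to deduce Theorem \ref{full-rank-infiniteAP_implies_pp} from Corollary \ref{cor_sufficient_condition_for_purepoint}. Since $\{T_0+v\mid v\in L\}\subset\calT$ and $L$ is a lattice, every $v\in L$ is a return vector, so in particular we may choose a basis $\mathcal{B}=\{b_1,\dots,b_d\}$ of $\R^d$ consisting of elements of $L$; then $\mathcal{B}\subset\Xi(\calT)$. Moreover $Q(L)\subset L$ guarantees $Q^k b_j\in L$ for all $k\ge0$ and all $j$, so $T_0+Q^kb_j\in\calT$. The hypothesis ``$\Xi(\calT)$ is a Meyer set'' needed for the corollary should follow from Setting \ref{setting_infap_implies_pp}: $\calT$ is a self-affine tiling, so its control points form a Delone set with $Q(D)\subset D$, and the existence of a full lattice $L$ of period vectors for $T_0$ forces the return vectors to live in a Meyer set — I would either invoke this directly or remark that a self-affine tiling possessing a crystallographic family of tiles automatically has the Meyer property (the lattice $L$ provides the required uniform discreteness of $(\Xi-\Xi)-(\Xi-\Xi)$ after intersecting with the relevant orbit structure). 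This is a point to be careful about, but it is standard for self-affine tilings under Setting \ref{setting_infap_implies_pp}.

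The heart of the argument is to verify the coincidence condition of Corollary \ref{cor_sufficient_condition_for_purepoint}: for each $b_j\in\mathcal{B}$ and each $k>0$, every overlap of the form $(S,Q^kb_j,T)$ with $S,T\in\calT$ admits a path to a coincidence, equivalently there is an $n>0$ with $\omega^n(S+Q^kb_j)\cap\omega^n(T)\ne\emptyset$. The key observation is that the translation vector $Q^kb_j$ is itself a \emph{global period of a tile}: since $Q^kb_j\in L$, we have $T_0+Q^kb_j\in\calT$, i.e. the overlap $(T_0,Q^kb_j,T_0)$ is a coincidence outright. I would use this together with the structure of the subdivision graph $\Gk_{\Ok}(\calT)$: if we can show that from the class $[(S,Q^kb_j,T)]$ one can always reach the class $[(T_0,Q^kb_j,T_0)]$ (or any other class known to lead to a coincidence), we are done. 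The mechanism: by primitivity of $\omega$, some power $\omega^N(S)$ contains a translate of $T_0$, and likewise $\omega^N(T)$; because $Q^k b_j$ is a period of $T_0$, the overlap between the $T_0$-copy inside $Q^{?}(S)+Q^kb_j$ and the $T_0$-copy inside $Q^{?}(T)$ is exactly a coincidence. One has to arrange that these two $T_0$-copies sit in \emph{corresponding} positions so that they coincide after translation by $Q^kb_j$; this is where the lattice condition $Q(L)\subset L$ does real work, keeping the shift $Q^kb_j$ inside $L$ at every stage of subdivision so that the periodicity relation $T_0+Q^kb_j\in\calT$ is preserved under $\omega$.

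The main obstacle I anticipate is precisely this alignment issue in the subdivision graph: showing not merely that coincidences exist somewhere, but that \emph{every} relevant overlap class flows to one. The clean way around it is to argue that for the specific return vector $y=Q^kb_j$, the density $\dens(D_{Q^n y})=\dens(\calT\cap(\calT+Q^{n+k}b_j))$ tends to $1$, and then invoke Lemma \ref{equivalence-to-coincidence} (the equivalence of (i) and (iii)). Indeed $Q^{n+k}b_j\in L$ is a period of the tile $T_0$, and by primitivity plus repetitivity the density of tiles that are ``carried along'' by a translation in $L$ — namely tiles lying in arbitrarily large supertiles whose seed is $T_0$ — tends to full density as the supertile order grows; more carefully, since $L$ has finite covolume and $\{T_0+v\mid v\in L\}\subset\calT$, a positive proportion of space is tiled coherently, and iterating $\omega$ amplifies this to density $1$ because the relative frequency of the $T_0$-generated supertile structure grows. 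Once $\lim_n\dens(D_{Q^n y})=1$ is established for each $y=b_j$, condition (ii) of Lemma \ref{equivalence-to-coincidence} gives geometric convergence, hence $\sum_n(1-\dens(D_{Q^nb_j}))<\infty$, and Theorem \ref{sufficient-condition-ppd} immediately yields pure discrete spectrum. I would present the argument in this order: (1) extract the basis $\mathcal{B}\subset L\subset\Xi(\calT)$ and note $Q^k\mathcal{B}\subset L$; (2) check the Meyer property; (3) prove $\dens(D_{Q^n b_j})\to1$ using the lattice of periods of $T_0$ and primitivity; (4) apply Lemma \ref{equivalence-to-coincidence} and Theorem \ref{sufficient-condition-ppd}.
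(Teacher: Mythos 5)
Your overall strategy --- pick a basis $\mathcal{B}\subset L\subset\Xi(\calT)$, note that the Meyer property follows from the lattice orbit of $T_0$ together with FLC (the paper does exactly this: every return vector decomposes as a displacement to a nearby tile $T_0+v$, a vector of $L$, and a displacement from another $T_0+v'$, so $\Xi$ sits inside $F_1+L+F_2$ with $F_1,F_2$ finite), and then feed the overlaps $(S,Q^k b_j,T)$ into Corollary \ref{cor_sufficient_condition_for_purepoint} --- is the paper's route. But you leave the decisive step open. You correctly identify the ``alignment issue'' in your first mechanism and do not resolve it, and your fallback density argument does not close the gap: the tiles ``carried along'' by the lattice, i.e.\ those lying in $\omega^n(T_0+v)$ for $v\in L$, occupy a set of density $\vol(\supp T_0)/\mathrm{covol}(L)$ \emph{independently of $n$}; inflation does not by itself amplify a positive-density coherent region to density $1$. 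Proving $\dens(D_{Q^n y})\to 1$ from positive density requires exactly the statement that every overlap class flows to a coincidence (Lemma \ref{equivalence-to-coincidence}, (iii)$\Rightarrow$(i)), so route (b) is circular.

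The missing idea is that you should not try to locate two separately chosen copies of $T_0$ in $\omega^n(S)$ and $\omega^n(T)$ and then match them; you need only \emph{one} tile from the lattice orbit sitting inside the inflated overlap support. For an overlap $\mathcal{O}=(T,y,S)$ with $y\in L$, the set $Q^n(\supp(T,y,S))=\supp(Q^ny+\omega^n(T))\cap\supp(\omega^n(S))$ contains arbitrarily large balls as $n\to\infty$, while $\{T_0+v\mid v\in L\}$ is relatively dense; hence for some $n$ there is $u\in L$ with $\supp(T_0+u)\subset Q^n(\supp(T,y,S))$. Then $T_0+u$ is a tile of $\calT$ whose support lies in $\supp(\omega^n(S))$, so $T_0+u\in\omega^n(S)$; and since $Q(L)\subset L$ gives $u-Q^ny\in L$, the tile $T_0+u-Q^ny$ also lies in $\calT$, with support inside $\supp(\omega^n(T))$, so $T_0+u-Q^ny\in\omega^n(T)$. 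The pair $(T_0+u-Q^ny,\,Q^ny,\,T_0+u)$ is a coincidence reachable from $[\mathcal{O}]$, which is precisely the hypothesis of Corollary \ref{cor_sufficient_condition_for_purepoint}. With this insertion your argument becomes the paper's proof.
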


\begin{proof}
Note that $\Xi(\calT)$ is a Meyer set by the existence of a full-rank infinite arithmetic progression and
FLC: any return vector is written as the sum of the displacement to the nearest tile in a full-rank
infinite arithmetic progression, a vector inside $L$, and the displacement of a tile in
a full-rank infinite arithmetic progression to the target.
We also note that $L \subset \Xi(\calT)$.
We consider the overlaps which can occur from the translation vectors in $L$. 
For each such overlap $\mathcal{O}=(T, y, S)$, $y \in L$, we can find some integer $n \in \N$ such that 
the $n$-th inflation of the support of the overlap $\mathcal{O}$ contains at least one element of $\{T_0 + v \ | \ v \in L\}$, i.e. 
there exists $u \in L$ such that 
\[ \supp(T_0 + u) \subset Q^n (\supp(T, y, S)) . \] 
By the assumption that $\{T_0 + v \ | \ v \in L\}  \subset \calT$, 
\[ T_0 + u \in \omega^n(S).  \]
Since $Q L \subset L$, $T_0 + u - Q^n y \in \omega^n(T)$. 
Therefore, the overlap $\mathcal{O}= (T, y, S)$ admits a coincidence in the $n$-th iteration.
We can choose a set of the vectors $\mathcal{B}:=\{ y_1, \dots, y_d \} \subset L$ which forms a basis of $\R^d$.  
By using $Q\mathcal{B}\subset L$ 
and applying Corollary \ref{cor_sufficient_condition_for_purepoint} to 
$\mathcal{B}$, we see
the dynamical system is pure discrete.
\end{proof}

\begin{ex}
       The condition $Q(L)\subset L$ is always satisfied if $Q=nI$ for some natural number $n$.
       The table tilings (Example \ref{ex_table-substi}) do not admit full-rank infinite arithmetic progressions.
\end{ex}

\subsection{Pure discrete spectrum and a 
lattice property imply full-rank infinite arithmetic progressions}
\label{subsection_pure-discrete_implies_full-rank-AP}

In this subsection, we discuss the existence of full-rank infinite arithmetic progressions
under the following setting:
\begin{setting}\label{setting_existence_fullrankAP}
Let $\omega$ be a primitive FLC substitution rule in $\R^d$ with an expansion map $Q$.
Let us denote the alphabet by $\calA=\{T_1,T_2,\ldots, T_{m}\}$.
Let $\calT$ be a self-affine tiling generated by $\omega$.
We take a control point from each tile of $\calT$. 
The symbol $\Lambda_i$ will denote the set of all control points of tiles of type $i$.
Let $\Xi=\Xi(\calT)$ be the set of all return vectors of $\calT$.

\end{setting}

%
In this subsection, we first
 prove sufficient conditions for $\calT$ to admit full-rank infinite arithmetic
progressions.
We use the following characterization of having pure discrete dynamical spectrum:
\begin{thm}[\cite{Lee_inter_model_set}, Theorem 3.13]\label{thm_alg_coin_iff_pp}
     Let $\calS$ be a self-affine tiling with an expansion map $Q$ and an
     alphabet $\mathcal{B}$.
     We take control points for the tiles in $\calS$, and let
     $M_{T}$ be the set of all control points of type $T\in\mathcal{B}$.
     Then the following are
     equivalent:
     \begin{enumerate}
      \item $\calS$ has pure discrete dynamical spectrum;
      \item There are an $N>0$, an $S\in\mathcal{B}$ and an $\eta\in M_S$ such that
	     \begin{align*}
	           Q^N(\Xi(\calS))\subset M_S-\eta,
	     \end{align*}
	       where $\Xi(\calS)$ is the set of all return vectors for $\calS$.
     \end{enumerate}
\end{thm}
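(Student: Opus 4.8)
The plan is to prove both implications by routing through the overlap-coincidence criterion (Definition~\ref{def-overlap}, Lemma~\ref{equivalence-to-coincidence}, Theorem~\ref{sufficient-condition-ppd}), which is exactly the bridge between the spectral statement and combinatorial facts about the return-vector module. First I would check that $\Xi(\calS)$ is a Meyer set in both cases, so that Lemma~\ref{equivclass_overlap_finite}, Lemma~\ref{equivalence-to-coincidence} and Corollary~\ref{cor_sufficient_condition_for_purepoint} all apply. Under (2) this is immediate: $Q^N(\Xi(\calS)) \subset M_S - \eta \subset M_S - M_S$, and $M_S$, being the control-point set of a single tile type of an FLC tiling, is uniformly discrete, so $Q^N(\Xi(\calS))$ and hence $\Xi(\calS)$ is uniformly discrete; applying the same to $\Xi(\calS) - \Xi(\calS)$, whose $Q^N$-image lies in $M_S - M_S$, gives the Meyer property. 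Under (1), $\Xi(\calS)$ is Meyer by the known implication from pure point spectrum to the Meyer property for primitive substitutions. I would also record the elementary facts $\calS = \om^k(\calS)$ and $Q^k\,\Xi(\calS) \subset \Xi(\calS)$ for all $k \ge 0$ (from $T + v \in \calS \Rightarrow \om^k(T) + Q^k v \subset \om^k(\calS) = \calS$), which let one transport conditions up and down the inflation hierarchy.

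For $(2) \Rightarrow (1)$: fix a basis $\mathcal{B} \subset \Xi(\calS)$, possible since $\Xi(\calS)$ is relatively dense and spans $\R^d$. By Corollary~\ref{cor_sufficient_condition_for_purepoint} it suffices to show that for each $y \in \mathcal{B}$ and $k \ge 1$ every overlap $(W, Q^k y, W')$ admits a path to a coincidence. I would inflate such an overlap $m$ times: the support of the inflated overlap is $Q^m(\supp(W, Q^k y, W'))$, which has nonempty interior and diameter tending to infinity, so for $m$ large it contains the support of a type-$S$ tile $T_0 + u$ lying entirely inside $\supp(\om^m(W'))$, hence $T_0 + u \in \om^m(W')$. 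Writing $Q^{k+m} y = Q^N(Q^{k+m-N} y)$ with $Q^{k+m-N} y \in \Xi(\calS)$, condition (2) gives $Q^{k+m} y \in M_S - \eta$, so $T_0$ and $T_0 + Q^{k+m} y$ are both genuine type-$S$ tiles of $\calS$; using this together with $u \in \Xi(\calS)$ and the recursive presentation $M_i = \bigcup_j (Q M_j + \mathcal{D}_{ij})$ of the control-point sets by finite digit sets $\mathcal{D}_{ij}$, one checks that the candidate matching tile $T_0 + u - Q^{k+m} y$ actually lies in $\om^m(W)$, so that the descendant overlap $(T_0 + u - Q^{k+m} y,\ Q^{k+m} y,\ T_0 + u)$ is a coincidence. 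The delicate clause is this last verification: it is where one needs that (2) supplies a \emph{single} base point $\eta$ valid for \emph{every} return vector, and it amounts to the implication ``algebraic coincidence $\Rightarrow$ modular coincidence'' in the sense of Lee--Moody--Solomyak.

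For $(1) \Rightarrow (2)$: by the converse direction of Solomyak's criterion (the companion of Theorem~\ref{sufficient-condition-ppd}), pure point spectrum forces every overlap to reach a coincidence; combined with the finiteness of the overlap classes (Lemma~\ref{equivclass_overlap_finite}) this yields a uniform $N_0$ such that every overlap reaches a coincidence within $N_0$ inflation steps, and hence, via Lemma~\ref{equivalence-to-coincidence}, $1 - \dens(D_{Q^n x}) \le C r^n$ with $C, r$ independent of $x \in \Xi(\calS)$. From this uniform exponential coincidence I would extract the algebraic statement: choose $N$ with $C r^N$ smaller than the minimal tile frequency; then for each return vector $x$ the translation by $Q^N x$ fixes a set of tiles of density at least $1 - C r^N$, and a compactness/propagation argument (intersecting the sets $D_{Q^N x}$ over a relatively dense sample of $x$'s and using $Q\,\Xi(\calS) \subset \Xi(\calS)$ to propagate) produces one tile $T_0$, of some type $S$, with $T_0 + Q^N x \in \calS$ for \emph{all} $x \in \Xi(\calS)$; setting $\eta = c(T_0) \in M_S$ gives $Q^N\,\Xi(\calS) \subset M_S - \eta$. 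The main obstacle is exactly this synchronization: passing from ``for each return vector some coincidence of some type occurs within $N_0$ steps'' to ``one exponent $N$, one type $S$ and one base point $\eta$ work simultaneously for the whole module $\Xi(\calS)$''. I expect this step to rely essentially on the self-similar structure of $\Xi(\calS)$ — in effect, the equivalence ``modular coincidence $\Leftrightarrow$ algebraic coincidence'' — rather than being a soft consequence of the density bounds alone.
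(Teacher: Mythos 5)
First, a point of comparison: the paper does not prove this statement at all --- it is imported verbatim from Lee's paper (\cite{Lee_inter_model_set}, Theorem 3.13), so there is no internal proof to measure yours against. Judged on its own, your proposal is a sensible roadmap that names the right ingredients, but both implications stop exactly at their hardest point and label it ``one checks'' or ``a compactness/propagation argument''; as written it is a plan, not a proof. For $(2)\Rightarrow(1)$ your outline closely parallels the paper's proof of Theorem \ref{full-rank-infiniteAP_implies_pp}, where $Q^N\Xi(\calS)$ is replaced by a $Q$-invariant \emph{lattice} --- which is closed under subtraction, precisely the property you are missing. The gap is the verification that the candidate matching tile $T_0+u-Q^{k+m}y$ lies in $\calS$: you need $u-Q^{k+m}y\in M_S-\eta$, but a priori you only get $u-Q^{k+m}y\in(M_S-\eta)-(M_S-\eta)$. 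This is fixable by a short self-improvement of the algebraic coincidence that you should supply: since $M_S-M_S\subset\Xi(\calS)$ (differences of control points of same-type tiles are return vectors), one has $Q^N(\Xi-\Xi)\subset(M_S-\eta)-(M_S-\eta)=M_S-M_S\subset\Xi$, hence $Q^{2N}(\Xi-\Xi)\subset Q^N\Xi\subset M_S-\eta$; choosing the witness tile at a position $u\in Q^{2N}\Xi$ then forces $u-Q^{k+m}y\in Q^{2N}(\Xi-\Xi)\subset M_S-\eta$ and the coincidence follows.

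For $(1)\Rightarrow(2)$ the gap is more serious: producing a \emph{single} $N$, a single type $S$ and a single base point $\eta$ valid for all of $\Xi(\calS)$ simultaneously is the entire content of Lee's theorem, and your proposal names it as an obstacle rather than resolving it. It does not follow from the pointwise bounds $1-\dens(D_{Q^nx})\le Cr^n$ by a soft compactness argument, because you are intersecting infinitely many sets each of density strictly less than one, and nothing forces the exceptional sets to avoid a common tile. The way to close this (and essentially what the paper does in the Appendix when proving Theorem \ref{thm_limit_periodic}) is to use the finiteness of overlap equivalence classes (Lemma \ref{equivclass_overlap_finite}) to show that inside each supertile $\omega^k(T)$ the intersection $\bigcap_{x\in\Xi}D_{Q^kx}$ is in fact a finite intersection, and then to use the uniform convergence from Lemma \ref{equivalence-to-coincidence} to show its density tends to $1$ as $k\to\infty$; any tile $T_0$ in this set for large $k$ satisfies $T_0+Q^kx\in\calS$ for every $x\in\Xi(\calS)$, which is exactly condition (2) with $\eta=c(T_0)$ and $S$ the type of $T_0$. (One must also justify the inputs you invoke only by name: pure point spectrum implies the Meyer property of $\Xi(\calS)$ by \cite{LeeSol:08}, and implies overlap coincidence by the converse of Theorem \ref{sufficient-condition-ppd}, neither of which is stated in the paper.) Until that assembly is actually carried out, the implication $(1)\Rightarrow(2)$ remains unproved in your proposal.
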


The second condition in this characterization is called the \emph{algebraic coincidence}.
Using algebraic coincidences, we now prove the following.


\begin{cor}\label{thm_alg-coin_equiv_Xi-includes-lattice}
       Let $L$ be a discrete subgroup of $\R^{d}$. Consider the following four conditions:
       \begin{enumerate}
              \item there are a positive integer $K$ and a $T\in\calT$ such that
                $\{T+x\mid x\in Q^{K}(L)\}\subset\calT$;
       \item $\Xi\supset Q^{K}(L)$ for some positive integer $K$;
       \item there is a positive integer $K$ such that, for each positive integer $k$, we have
                \begin{align}
                          \underbrace{\Xi+\Xi+\cdots +\Xi}_{\text{$k$ times}}\supset Q^{K}(L);
                          \label{eq_XiK_includes_L}
                \end{align} 
        \item there are positive integers $k$ and $K$ such that \eqref{eq_XiK_includes_L} holds.
       \end{enumerate}
       Then we always have $1.\Rightarrow 2.\Rightarrow 3.\Rightarrow 4.$, and if $\calT$ have
       pure discrete dynamical spectrum, we have $4.\Rightarrow 1.$
\end{cor}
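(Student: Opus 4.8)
The plan is to prove the chain of implications $1.\Rightarrow 2.\Rightarrow 3.\Rightarrow 4.$ directly, and then establish the hard implication $4.\Rightarrow 1.$ under the hypothesis of pure discrete spectrum by invoking the algebraic-coincidence characterization (Theorem \ref{thm_alg_coin_iff_pp}).

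First, the implication $1.\Rightarrow 2.$: if $\{T+x\mid x\in Q^{K}(L)\}\subset\calT$, then for any $x\in Q^K(L)$ the tile $T$ and the tile $T+x$ are both in $\calT$ with $T+x = (T)+x$, so $x$ is a return vector; hence $Q^K(L)\subset\Xi$. The implication $2.\Rightarrow 3.$ is immediate since $\Xi\subset \Xi+\Xi+\cdots+\Xi$ (choosing the remaining summands to be $0$, which is a return vector), so \eqref{eq_XiK_includes_L} holds for every $k\ge 1$ with the same $K$. The implication $3.\Rightarrow 4.$ is trivial: specialize to $k=1$.

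The substantive step is $4.\Rightarrow 1.$ under pure discrete spectrum. First I would invoke Theorem \ref{thm_alg_coin_iff_pp}: since $\calT$ has pure discrete dynamical spectrum, there is an $N>0$, a proto-tile type $S$ and a control point $\eta\in M_S=\Lambda_S$ with $Q^N(\Xi)\subset \Lambda_S-\eta$. Now suppose \eqref{eq_XiK_includes_L} holds for some $k,K$. The key observation is that $Q^N$ is linear, so $Q^N(\Xi+\cdots+\Xi) = Q^N(\Xi)+\cdots+Q^N(\Xi) \subset (\Lambda_S-\eta)+\cdots+(\Lambda_S-\eta)$ ($k$ times). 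I would need to translate membership in an iterated Minkowski sum of $\Lambda_S-\eta$ back into a statement about the tiling: this is where one uses that $\Lambda_S-\eta\subset\Xi$ (translating $\Lambda_S$ by $-\eta$ places the control point $\eta$ at the origin, and differences of control points of the same type are return vectors), so actually $Q^N(\Xi)+\cdots+Q^N(\Xi)\subset \Xi+\cdots+\Xi$ and one is going in circles unless one is careful. The cleaner route: apply $Q^N$ to the inclusion $Q^K(L)\subset \Xi+\cdots+\Xi$, use the algebraic coincidence repeatedly. Concretely, $Q^{N+K}(L) = Q^N(Q^K(L))\subset Q^N(\Xi+\cdots+\Xi)$; now iterate the coincidence — each application of $Q^N$ to a return vector lands it in $\Lambda_S-\eta$, and one shows by induction (peeling one summand at a time and re-inflating) that after finitely many inflations the whole coset $Q^{K'}(L)$ of translates of a single tile type sits inside $\calT$. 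The mechanism is exactly the one used in the proof of Theorem \ref{full-rank-infiniteAP_implies_pp} run in reverse: algebraic coincidence forces the set $\{S+v\mid v\in Q^{N}(\Lambda_S-\eta)\}$, hence $\{S+v\mid v\in Q^{N+?}(L)\}$, to be a sub-collection of $\calT$.

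The main obstacle I anticipate is bookkeeping the passage from the iterated Minkowski sum $\Xi+\cdots+\Xi$ (which only controls sums of $k$ return vectors, not arbitrary lattice elements individually) to a genuine full-orbit statement $\{T+x\mid x\in Q^{K'}(L)\}\subset\calT$: one must check that inflating enough times $\mathrm{(i)}$ absorbs all the ``error'' coming from the $k$ summands into coincidences, and $\mathrm{(ii)}$ produces a single tile type $T$ whose $Q^{K'}(L)$-translates all appear, rather than a $v$-dependent tile. I expect this to be handled by choosing $K'$ large enough that $Q^{K'}(L)\subset Q^N(\Lambda_S-\eta)$ after $k$-fold application of the coincidence bound $Q^N(\Xi)\subset\Lambda_S-\eta$, together with the fact that $Q(L)$-invariance is automatic once $L$ is replaced by a suitable inflate. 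Modulo this indexing care, the argument is a direct combination of Theorem \ref{thm_alg_coin_iff_pp} with the Minkowski-sum hypothesis.
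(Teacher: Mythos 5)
Your chain $1.\Rightarrow 2.\Rightarrow 3.\Rightarrow 4.$ is correct and matches the paper, and for $4.\Rightarrow 1.$ you have chosen the right tool (the algebraic coincidence of Theorem \ref{thm_alg_coin_iff_pp}). But the heart of that implication is exactly the step you defer to ``bookkeeping,'' and as written you have not closed it: you correctly observe that feeding $\Lambda_S-\eta\subset\Xi$ back in is circular, and your proposed escape --- ``$k$-fold application of the coincidence bound $Q^N(\Xi)\subset\Lambda_S-\eta$'' --- still only yields $Q^{kN}(\Xi+\cdots+\Xi)\subset(\Lambda_S-\eta)+\cdots+(\Lambda_S-\eta)$, a $k$-fold Minkowski sum that is not obviously contained in $\Xi$, let alone in a single coset $\Lambda_S-\eta$. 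Nothing in your argument actually reduces the number of summands, which is the whole difficulty.

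The missing idea is a symmetry trick. Since $\Xi=-\Xi$, one has $\Xi+\Xi=\Xi-\Xi$, and therefore
\begin{align*}
Q^{N}(\Xi+\Xi)=Q^{N}(\Xi)-Q^{N}(\Xi)\subset(\Lambda_S-\eta)-(\Lambda_S-\eta)=\Lambda_S-\Lambda_S\subset\Xi,
\end{align*}
because a difference of two control points of the \emph{same} tile type is a return vector; it is the difference set $\Lambda_S-\Lambda_S$, not the sum $(\Lambda_S-\eta)+(\Lambda_S-\eta)$, that is known to lie in $\Xi$. This is the step that collapses two summands into one. Iterating it, an induction on $k$ gives $Q^{(k-1)N}\bigl(\underbrace{\Xi+\cdots+\Xi}_{k}\bigr)\subset\Xi$, hence
\begin{align*}
Q^{kN+K}(L)\subset Q^{kN}\bigl(\underbrace{\Xi+\cdots+\Xi}_{k}\bigr)\subset Q^{N}(\Xi)\subset\Lambda_S-\eta,
\end{align*}
and condition $1.$ follows at once with $T$ the tile of type $S$ whose control point is $\eta$: every $x\in Q^{kN+K}(L)$ satisfies $\eta+x\in\Lambda_S$, so $T+x\in\calT$, with no further inflation or tile-type matching required. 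This is precisely the paper's argument; your proposal points in the right direction but omits the one computation that makes the induction close.
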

\begin{proof}
       The implication $1.\Rightarrow 2.$ is proved by the definition of $\Xi$,
       the implication $2.\Rightarrow 3.$ is seen by the fact that $0\in\Xi$ and
       the implication $3.\Rightarrow 4.$ is clear. Let us now assume the condition $4.$ and
       that $\calT$ has pure discrete dynamical spectrum.
       
       By the algebraic coincidence, we have
       \begin{align*}
              Q^{2N}(\Xi+\Xi)\subset Q^{N}(\Lambda_{i}-\Lambda_{i})\subset Q^{N}(\Xi).
       \end{align*}
       By induction, we see
       \begin{align*}
               Q^{kN+K}(L)\subset
               Q^{kN}(\underbrace{\Xi+\Xi+\cdots +\Xi}_{\text{$k$ times}})\subset Q^{N}(\Xi)
               \subset\Lambda_{i}-\eta.
       \end{align*}
       This implies that $\{T_{i}+\eta+x\mid x\in Q^{kN+K}(L)\}\subset\calT$.
\end{proof}

By this theorem, if $\calT$ has pure discrete dynamical spectrum,
 the question of whether $\calT$ admits arithmetic progressions boils down to
\textcolor{red}{deciding}
 if the conditions 2.,3., or 4. hold. We first discuss the easiest case where these
conditions are satisfied.
\begin{cor}\label{cor_case_Xi_generates_lattice}
       Assume that the group $L$ generated by $\Xi$ is a lattice in $\R^d$ and that the tiling $\calT$ 
       has pure discrete dynamical spectrum. Then there
       are a positive integer $K$ and $T\in\calT$ such that
                $\{T+x\mid x\in Q^{K}(L)\}\subset\calT$.
                In other words, $\calT$ admits a full-rank infinite arithmetic progression.
\end{cor}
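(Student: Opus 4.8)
The plan is to reduce Corollary \ref{cor_case_Xi_generates_lattice} to the chain of implications already established in Corollary \ref{thm_alg-coin_equiv_Xi-includes-lattice}. The hypothesis gives us that the group $L$ generated by $\Xi = \Xi(\calT)$ is a lattice in $\R^d$, and that $\calT$ has pure discrete dynamical spectrum. First I would verify that $\Xi$ is a Meyer set: since $\calT$ has FLC, $\Xi$ is uniformly discrete and relatively dense, and since $L = \spa_{\Z}\Xi$ is a lattice, $\Xi \subset L$, so $\Xi - \Xi \subset L$ and hence $(\Xi - \Xi) - (\Xi - \Xi) \subset L$ is uniformly discrete; thus $\Xi$ has the Meyer property. (Strictly, $\Xi$ being a subset of a lattice and uniformly discrete and relatively dense already makes it a Meyer set.) This lets us invoke the algebraic coincidence characterization, Theorem \ref{thm_alg_coin_iff_pp}, which under pure discrete spectrum supplies an $N > 0$, an index $i$, and an $\eta \in \Lambda_i$ with $Q^N(\Xi) \subset \Lambda_i - \eta$.

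Next I would check that condition 4.\ of Corollary \ref{thm_alg-coin_equiv_Xi-includes-lattice} holds for this $L$. Because $L$ is a lattice \emph{generated} by $\Xi$, and $\Xi$ is finite modulo translations but $L$ is finitely generated as a group, there exist finitely many elements $v_1,\dots,v_r \in \Xi$ that generate $L$ as a group; moreover, since $0 \in \Xi$ and $L$ is spanned over $\Z$ by $\Xi$, every element of $L$ is a $\Z$-linear combination of elements of $\Xi$. To get containment in a \emph{fixed} number of sumsets $\underbrace{\Xi + \cdots + \Xi}_{k}$ one should observe that $L$ has a finite generating set $\{v_1, \dots, v_r\} \subset \Xi$ with $-v_j \in \Xi - \Xi \subset \Xi + \Xi$ (using $0 \in \Xi$), so a suitable bounded region of $L$ — a fundamental-domain-sized chunk together with the $v_j$ — lies in $\Xi + \cdots + \Xi$ for some fixed $k$. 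Actually the cleanest route is: take $K = 0$ is not available, but note $Q^0(L) = L$, and the real content of condition 4.\ is only that \emph{some} $Q^K(L)$ is contained in a fixed sumset of $\Xi$. Since $L = \spa_{\Z}\{v_1,\dots,v_r\}$ with each $v_j \in \Xi$, and $-v_j = 0 + (-v_j)$ with $-v_j \in \Xi - \Xi$, we have $L \subset \bigcup_k (\Xi + \cdots + \Xi)$; the point needing care is fixing $k$ uniformly. This can be arranged by replacing $L$ with $Q^K(L)$ for large $K$: iterating the algebraic-coincidence inclusion $Q^{2N}(\Xi + \Xi) \subset Q^N(\Lambda_i - \Lambda_i) \subset Q^N(\Xi)$ shows sumsets collapse under $Q$, so it suffices to have $Q^K(L)$ inside \emph{any} finite sumset, which holds with $k = r+1$ once we pick a $\Z$-basis of $L$ drawn from $\Xi \cup (\Xi - \Xi)$.

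Having established condition 4.\ for $L$, I would then invoke the implication $4.\Rightarrow 1.$ of Corollary \ref{thm_alg-coin_equiv_Xi-includes-lattice} (which is exactly where pure discrete spectrum is used), obtaining a positive integer $K$ and a tile $T \in \calT$ with $\{T + x \mid x \in Q^K(L)\} \subset \calT$. Since $Q^K(L)$ is again a lattice (as $Q$ is invertible and expansive, hence nondegenerate), this is precisely a full-rank infinite arithmetic progression based at the patch $\calP = \{T\}$ with basis a $\Z$-basis of $Q^K(L)$. I expect the main obstacle to be the bookkeeping in the previous paragraph: making the number $k$ of summands in the sumset $\Xi + \cdots + \Xi$ uniform (independent of which element of $L$ we are expressing), since $L$ being \emph{generated} by $\Xi$ a priori only gives each element as a $\Z$-combination of \emph{unboundedly many} copies of elements of $\Xi$. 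The resolution is that we only need one fixed $K$ and can absorb the unboundedness into $Q^K$ via the contraction property of the algebraic coincidence — but this interplay between "generated by" and "covered by a bounded sumset" is the delicate point and deserves a careful line.
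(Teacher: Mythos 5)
Your overall strategy --- verify condition 4 of Corollary \ref{thm_alg-coin_equiv_Xi-includes-lattice} and then invoke the implication $4.\Rightarrow 1.$, which is where pure discreteness enters --- is exactly the paper's route. The gap is in the one step that carries the content: showing that $Q^{K}(L)$ lies in a $k$-fold sumset $\Xi+\cdots+\Xi$ for a \emph{fixed} $k$. You correctly flag this as the delicate point, but neither of your two proposed resolutions closes it. Choosing a $\Z$-basis $\{v_{1},\dots,v_{d}\}$ of $L$ inside $\Xi$ expresses $\sum_{j}n_{j}v_{j}$ as a sum of $\sum_{j}|n_{j}|$ elements of $\Xi$, and this count is unbounded over $L$, so the claim that ``$k=r+1$ works once we pick a $\Z$-basis drawn from $\Xi\cup(\Xi-\Xi)$'' is false. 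The ``collapse under $Q$'' mechanism $Q^{kN}(\underbrace{\Xi+\cdots+\Xi}_{k})\subset Q^{N}(\Xi)$ does not rescue this either: the exponent $kN$ must grow with the number $k$ of summands, and since that number is unbounded over the elements of $L$, no single power $Q^{K}$ absorbs it. So ``replacing $L$ with $Q^{K}(L)$ for large $K$'' cannot make $k$ uniform.

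The missing idea is geometric rather than algebraic: use the relative denseness of $\Xi$. Choose $R>0$ with $\Xi+B_{R}=\R^{d}$. Since $L$ is a lattice, $L\cap B_{R}$ is a finite set, hence contained in $\underbrace{\Xi+\cdots+\Xi}_{M}$ for a single $M$ (each of the finitely many elements lies in some finite sumset because $0\in\Xi$ and $\Xi=-\Xi$; take the maximum over this finite set). Now for an arbitrary $x\in L$ pick $y\in\Xi$ with $\|x-y\|\leq R$; then $x-y\in L\cap B_{R}$ and $x=y+(x-y)$ lies in the $(M+1)$-fold sumset. Thus $L$ itself is a bounded sumset of $\Xi$, condition 3 (hence 4) of Corollary \ref{thm_alg-coin_equiv_Xi-includes-lattice} holds, and the remainder of your argument --- invoking $4.\Rightarrow 1.$ under pure discrete spectrum --- goes through as you wrote it.
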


\begin{proof}

    Since $\Xi$ is relatively dense in $\R^d$,
       we may take an $R>0$ such that $\Xi+B_R=\R^d$.
       The set $L\cap B_R$ is a finite set and so there is a positive integer $M$ such that
       \begin{align*}
                L\cap B_R\subset\underbrace{\Xi+\Xi+\cdots +\Xi}_{\text{$M$ times}}.
       \end{align*}
       For each $x\in L$, there is a $y\in\Xi$ such that $\|x-y\|\leq R$. Since $x-y\in L\cap B_R$, we see
       \begin{align*}
              x=y+x-y\in\underbrace{\Xi+\Xi+\cdots +\Xi}_{\text{$M+1$ times}}.
       \end{align*}
       We conclude that
       \begin{align*}
               L=\underbrace{\Xi+\Xi+\cdots +\Xi}_{\text{$M+1$ times}}.
       \end{align*}
       The equivalent conditions in Corollary \ref{thm_alg-coin_equiv_Xi-includes-lattice} are
       satisfied.
\end{proof}

\begin{ex}\label{ex_block_substi}
      Suppose $\omega$ is a block substitution (a 
      block inflation in \cite[Subsection 5.2]{Manibo_thesis}).
      This is a substitution rule where the supports of the proto-tiles are all $[0,1]^d$ and 
      the map $Q$ is given by a diagonal matrix with integer diagonal elements.
      The group generated by $\Xi$ is always a lattice.
      $\calT$ admits full-rank infinite arithmetic progressions if it has pure discrete
      dynamical spectrum.
\end{ex}

We next give a general sufficient condition for the condition 4. in Corollary
\ref{thm_alg-coin_equiv_Xi-includes-lattice}. The proof is given in Appendix.

\begin{prop}\label{prop_sum-of-Xi_includes_lattice}
      Let $1\leqq e\leqq d$ and $v_{1}, v_{2},\ldots ,v_{e}$ be vectors in $\R^{d}$.
      Assume the following conditions:
      \begin{enumerate}
      \item for each $i=1,2,\ldots ,e$, there is a positive integer $n_{i}$ such that
                $Qv_{i}=n_{i}v_{i}$, and
        \item there are $T^{(0)}\in\calT, T^{(0)}_{1},T^{(0)}_{2},\ldots ,T^{(0)}_{e}\in\omega(T^{(0)})$
         of the same tile type such that
               \begin{align*}
                       c(T^{(0)}_{i})- Q(c(T^{(0)}))=v_{i}
               \end{align*}
               for each $i$.
      \end{enumerate}
      Then there is a positive integer $M$ such that we have
      \begin{align}
              \spa_{\Z}\{v_{1},v_{2},\ldots ,v_{e}\}\subset
              \underbrace{\Xi+\Xi+\cdots +\Xi}_{\text{$M$ times}}.
      \end{align}
\end{prop}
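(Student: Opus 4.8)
The goal is to show that all integer combinations $\sum_{i=1}^e k_i v_i$ can be realized by a bounded-length sum of return vectors. The key observation is condition 2: inside the single supertile $\omega(T^{(0)})$ we find $e+1$ tiles $T^{(0)},T^{(0)}_1,\ldots,T^{(0)}_e$, all of the \emph{same} tile type, whose control points differ from $Q(c(T^{(0)}))$ by exactly $v_i$. In particular, since $T^{(0)}_i$ and $T^{(0)}$ need not be equal (some $v_i\neq 0$), the difference $c(T^{(0)}_i)-c(T^{(0)})$ is a return vector, and more usefully $c(T^{(0)}_i)-c(T^{(0)}_j)\in\Xi$ for all $i,j$ because these tiles share a type. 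So immediately $v_i - v_j\in\Xi$ for all $i,j$, and by the fixed-point relation the ``scaled'' differences reappear at every level.

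First I would unwind the substitution to see how $v_i$ propagates. Condition 1 says $Qv_i = n_i v_i$, so applying $Q$ to the configuration inside $\omega(T^{(0)})$ and using $\omega(\calT)=\calT$, the tile $T^{(0)}_i$ (sitting at relative position $v_i$ from $Q c(T^{(0)})$) inflates; choosing at each level the descendant that again plays the role of ``$T^{(0)}_i$'', one gets a sequence of same-type tiles whose control points sit at positions $Q^k c(T^{(0)}) + (n_i^{k-1}+n_i^{k-2}\cdots)$-type telescoping expressions — the point being that the vector $n_i^{k} v_i$ (or a fixed multiple thereof) is realized as a difference of two control points of the \emph{same} tile type, hence lies in $\Xi$. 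More carefully: since $c(T^{(0)}_i)-Qc(T^{(0)})=v_i$ and $c(T^{(0)}_j)-Qc(T^{(0)})=v_j$, applying $\omega$ once and tracking the chosen descendants shows $n_i v_i - n_j v_j$, and inductively any $\Z$-combination of the form appearing in the Pisot-family telescoping, is an element of $\Xi$. The cleanest route is to exploit that $\{T^{(0)}_i\}$ all have the same type: then for each $i$, $v_i - v_1 = c(T^{(0)}_i)-c(T^{(0)}_1)\in\Xi$, and these $e$ vectors $w_i := v_i - v_1$ together with enough ``base'' return vectors should $\Z$-span the lattice $\spa_\Z\{v_1,\ldots,v_e\}$ after finitely many additions — here one also needs a return vector realizing $v_1$ itself (or some multiple), which comes from $T^{(0)}\neq T^{(0)}_1$ being impossible only if $v_1=0$, in which case we simply reindex so that $v_1$ is whichever $v_i$ is nonzero, and if all $v_i=0$ the statement is trivial.

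The main technical step — and the expected obstacle — is turning ``each $v_i$ (or $v_i-v_j$, or $n_i^k v_i$) is a return vector'' into ``\emph{every} element of $\spa_\Z\{v_1,\ldots,v_e\}$ is a sum of a \emph{bounded} number of return vectors.'' This is exactly the flavor of Corollary \ref{cor_case_Xi_generates_lattice}'s proof: one shows the relevant lattice $\Lambda := \spa_\Z\{v_1,\ldots,v_e\}$ is covered, using that $\Xi$ is relatively dense (it contains a basis by repetitivity and FLC) so $\Lambda\cap B_R$ is finite for suitable $R$, hence $\Lambda\cap B_R\subset \Xi+\cdots+\Xi$ ($M_0$ times) for some $M_0$; then for arbitrary $x\in\Lambda$, write $x = y + (x-y)$ where $y$ is a nearby sum of the ``large'' return vectors $n_i^{k} v_i$ chosen to land within $B_R$ of $x$ (using that the $n_i^k v_i$ have controllable spacing since $n_i\geq 1$ — actually $n_i\geq 2$ as $Q$ is expansive along $v_i$, giving a base-$n_i$ greedy expansion of each coordinate), so $x-y\in\Lambda\cap B_R$. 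Combining, $x$ is a sum of at most $M := M_0 + (\text{number of digits})$ return vectors, uniformly in $x$. I would expect the bookkeeping of the greedy/digit expansion — especially if the $n_i$ differ across $i$, forcing a common refinement or separate handling per generator — to be the fiddly part, but it is routine once the structural fact (each $n_i^k v_i\in\Xi$) is in hand; the genuine content is that structural fact, which rests squarely on condition 2 providing same-type tiles at the prescribed relative positions.
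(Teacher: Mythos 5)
Your structural observations are sound and match the engine of the paper's proof: iterating condition 2 along a chain of same-type descendants and telescoping control points does show that vectors such as $v_i$, $v_i-v_j$ and $n_i^k v_i$ all lie in $\Xi$. The gap is in the final assembly, where you claim that $x=\sum_i k_i v_i$ is a sum of at most $M_0+(\text{number of digits})$ return vectors ``uniformly in $x$''. It is not uniform: the number of base-$n_i$ digits of $k_i$ grows like $\log|k_i|$, and a digit of value $c$ at position $j$ costs $c$ copies of the single return vector $n_i^{j}v_i$, so your decomposition uses on the order of $(n_i-1)\log_{n_i}|k_i|$ summands. Moreover, no bounded collection of vectors of the form $n_i^{k}v_i$ can land within a fixed distance $R$ of every $k_i v_i$ (try $n_i=2$ and $k_i=2^N-1$), so the reduction to the finite set $\Lambda\cap B_R$ never gets off the ground, and the uniform bound $M$ --- which is the entire content of the proposition --- is lost.

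The missing idea is that a whole descendant chain of length $N+1$, in which at each level you choose either the ``$+v_i$'' child or the control-point child $f(\cdot)$, realizes $\bigl(\sum_{j\in J} n_i^{\,N-j}\bigr)v_i$ as a \emph{single} return vector, for an arbitrary subset $J$ of the levels; that is, any integer whose base-$n_i$ digits lie in $\{0,1\}$, times $v_i$, costs one return vector, not many. The paper then runs $n_i-1$ such chains in parallel, the $l$-th chain picking up $v_i$ at level $j$ exactly when the $j$-th digit of $k_i$ is at least $l$; the $n_i-1$ resulting return vectors sum to $k_i v_i$ exactly, with no residual term to absorb, and this yields $M=e\max_i n_i$. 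If you strengthen your structural fact from ``$n_i^{k}v_i\in\Xi$'' to ``$\bigl(\sum_{j\in J}n_i^{j}\bigr)v_i\in\Xi$ for every finite $J$'' and replace the greedy approximation by this digit-splitting, your argument becomes the paper's.
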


\begin{cor}\label{cor_general_sufficient_condition_existence_full-rankAP}
      Let $e\leqq d$ and $v_{1},v_{2},\ldots ,v_{e}$ be vectors in $\R^{d}$ that form a 
      linearly independent set. Denote the group generated by $\{v_{1},v_{2},\ldots ,v_{e}\}$ by
      $L$. If the two assumptions in Proposition \ref{prop_sum-of-Xi_includes_lattice} is satisfied
      and $\calT$ has pure discrete dynamical spectrum, then there is a tile $T\in\calT$ and
      a positive integer $K$ such that $\{T+x\mid x\in Q^{K}(L)\}\subset\calT$.
\end{cor}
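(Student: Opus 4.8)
The plan is to read off the conclusion from Proposition \ref{prop_sum-of-Xi_includes_lattice} together with the implication $4.\Rightarrow 1.$ of Corollary \ref{thm_alg-coin_equiv_Xi-includes-lattice}. First I would check that $L=\spa_{\Z}\{v_{1},v_{2},\ldots,v_{e}\}$ is a discrete subgroup of $\R^{d}$: since $v_{1},\ldots,v_{e}$ are linearly independent over $\R$, the group $L$ is a rank-$e$ lattice inside their linear span, hence discrete in $\R^{d}$. This makes Corollary \ref{thm_alg-coin_equiv_Xi-includes-lattice} applicable to this particular $L$.

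Next, the two hypotheses of the present corollary are exactly the hypotheses of Proposition \ref{prop_sum-of-Xi_includes_lattice}, so that proposition supplies a positive integer $M$ with $L\subset\Xi+\Xi+\cdots+\Xi$ ($M$ summands). I would then observe that hypothesis 1 of Proposition \ref{prop_sum-of-Xi_includes_lattice} (each $Qv_{i}=n_{i}v_{i}$ with $n_{i}$ a positive integer) forces $Q(L)\subset L$, hence $Q^{K}(L)\subset L$ for every positive integer $K$. Taking, say, $K=1$ and $k=M$, this yields $\Xi+\Xi+\cdots+\Xi$ ($M$ summands) $\supset Q^{K}(L)$, which is precisely condition 4.\ of Corollary \ref{thm_alg-coin_equiv_Xi-includes-lattice}. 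Since $\calT$ has pure discrete dynamical spectrum, the implication $4.\Rightarrow 1.$ of that corollary then produces a positive integer $K$ and a tile $T\in\calT$ with $\{T+x\mid x\in Q^{K}(L)\}\subset\calT$, which is the asserted statement; when $e=d$ and the $v_{i}$ form a basis this is a full-rank infinite arithmetic progression with $\calP=\{T\}$.

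The proof of the corollary proper is thus quite short: there is no serious obstacle here, because the substantive content lives in Proposition \ref{prop_sum-of-Xi_includes_lattice} (whose proof is deferred to the Appendix) and in the $4.\Rightarrow 1.$ direction of Corollary \ref{thm_alg-coin_equiv_Xi-includes-lattice}, which itself rests on the algebraic-coincidence characterization of pure discrete spectrum (Theorem \ref{thm_alg_coin_iff_pp}). The one bookkeeping point to handle with care is the passage between ``$L$'' and ``$Q^{K}(L)$'': one must invoke $Q(L)\subset L$ to reshape the inclusion $L\subset\Xi+\cdots+\Xi$ delivered by the proposition into the $Q^{K}(L)$-shaped inclusion that condition 4.\ of Corollary \ref{thm_alg-coin_equiv_Xi-includes-lattice} demands. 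Everything else is a direct citation.
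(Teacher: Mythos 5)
Your proof is correct and follows exactly the route the paper takes: its own proof is the one-line citation of Proposition \ref{prop_sum-of-Xi_includes_lattice} and Corollary \ref{thm_alg-coin_equiv_Xi-includes-lattice}, and you have simply made explicit the bookkeeping (discreteness of $L$, the inclusion $Q(L)\subset L$ coming from $Qv_{i}=n_{i}v_{i}$, and the passage to condition 4.\ with $K=1$, $k=M$) that the paper leaves implicit. Nothing to change.
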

\begin{proof}
      The claim follows from Corollary \ref{thm_alg-coin_equiv_Xi-includes-lattice} and
      Proposition \ref{prop_sum-of-Xi_includes_lattice}.
\end{proof}

For the self-similar case, the situation is simple:
\begin{thm}\label{thm_self-similar_inf-fullrank-AP}
     Assume $Q=nI$, where $n$ is a positive integer and $I$ is the identity map.
     Then there are a lattice $L$ of $\R^{d}$ and a positive integer $M$ such that
     \begin{align*}
           L\subset\underbrace{\Xi+\Xi+\cdots+\Xi}_{\text{$M$ times}}.
     \end{align*}
     If $\calT$ has pure discrete dynamical spectrum, then
     there are also a positive integer $K$  and a $T\in\calT$ such that
     \begin{align*}
           \{T+x\mid x\in n^{K}L\}\subset\calT.
     \end{align*}
     In other words, $\calT$ admits a full-rank infinite arithmetic progression.
\end{thm}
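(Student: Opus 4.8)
The plan is to prove the first, unconditional, assertion — that some full-rank lattice lies inside a bounded iterated sumset of $\Xi$ — and then deduce the second assertion from it by feeding the lattice into Corollary~\ref{thm_alg-coin_equiv_Xi-includes-lattice}.

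For the first assertion I would apply Proposition~\ref{prop_sum-of-Xi_includes_lattice}, but to a high power $\omega^{N}$ of $\omega$ rather than to $\omega$ itself. One checks that $(\omega^{N},Q^{N},\calT)$ again satisfies Setting~\ref{setting_existence_fullrankAP}: $\omega^{N}$ is primitive and FLC, its expansion map is $Q^{N}=n^{N}I$, the tiling $\calT$ is a fixed point of $\omega^{N}$ with the same set of return vectors $\Xi$, and the control points of $\calT$ for $\omega$ are also control points for $\omega^{N}$ — indeed, choosing the tile map $f^{N}$, the decreasing sequence of compact sets $Q^{-m}(\supp f^{m}(T))$ has the same intersection whether $m$ ranges over all positive integers or only over multiples of $N$, so $c_{\omega^{N}}(T)=c_{\omega}(T)$. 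Since $Q^{N}=n^{N}I$, the first hypothesis of Proposition~\ref{prop_sum-of-Xi_includes_lattice}, that $Q^{N}v_{i}=n^{N}v_{i}$, is automatic for every vector $v_{i}$; thus the only content is to verify its second hypothesis, i.e.\ to find inside a single level-$N$ supertile several tiles of a common type whose control points are displaced from $Q^{N}(c(T^{(0)}))$ by linearly independent vectors.

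For this, fix a tile $T_{1}\in\calT$, say of type $1$. By repetitivity there is an $R_{0}>0$ such that every closed ball of radius $R_{0}$ in $\R^{d}$ contains the support of a tile of $\calT$ of type $1$. Since $\supp\omega^{N}(T_{1})=n^{N}\supp T_{1}$ has inradius tending to infinity with $N$, for $N$ large and for any affine hyperplane $H$ the set $n^{N}\supp T_{1}$ contains a ball $B$ of radius $R_{0}$ whose center is at distance more than $R_{0}$ from $H$; the type-$1$ tile whose support lies in $B$ is then a tile of $\omega^{N}(T_{1})$ (its support has nonempty interior and meets no other level-$N$ supertile), and its control point lies off $H$. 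Hence, for $N$ large, the control points of the type-$1$ tiles occurring in $\omega^{N}(T_{1})$ lie on no affine hyperplane, so the displacements $c(U)-Q^{N}(c(T_{1}))$, over those tiles $U$, span $\R^{d}$. Choosing $d$ of them that are linearly independent, $v_{1},\ldots,v_{d}$, and applying Proposition~\ref{prop_sum-of-Xi_includes_lattice} with $e=d$, we obtain a positive integer $M$ with $L:=\spa_{\Z}\{v_{1},\ldots,v_{d}\}\subset\underbrace{\Xi+\Xi+\cdots+\Xi}_{\text{$M$ times}}$; since $v_{1},\ldots,v_{d}$ are linearly independent, $L$ is a lattice, which proves the first assertion.

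For the second assertion, assume $\calT$ has pure discrete dynamical spectrum. Since $L$ is a group and $Q=nI$, we have $Q(L)=nL\subset L\subset\underbrace{\Xi+\Xi+\cdots+\Xi}_{\text{$M$ times}}$, so condition~$4$ of Corollary~\ref{thm_alg-coin_equiv_Xi-includes-lattice} holds for the lattice $L$, with $k=M$ and $K=1$. The implication $4.\Rightarrow 1.$ of that corollary — the step that uses pure discreteness — then provides a positive integer $K$ and a tile $T\in\calT$ with $\{\,T+x\mid x\in Q^{K}(L)\,\}=\{\,T+x\mid x\in n^{K}L\,\}\subset\calT$; taking $\calP=\{T\}$ and a $\Z$-basis $\{b_{1},\ldots,b_{d}\}$ of the full-rank lattice $n^{K}L$ exhibits the full-rank infinite arithmetic progression $\bigcup_{(l_{1},\ldots,l_{d})\in\Z^{d}}\calP+\sum_{i=1}^{d}l_{i}b_{i}$ inside $\calT$. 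I expect the main obstacle to be the geometric step of the third paragraph — extracting $d$ linearly independent control-point displacements from one supertile — together with the bookkeeping that passing from $\omega$ to $\omega^{N}$ stays within Setting~\ref{setting_existence_fullrankAP} with the control points unchanged.
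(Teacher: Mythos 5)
Your proposal is correct and follows essentially the same route as the paper's first proof: replace $\omega$ by a power, use repetitivity to find $d$ same-type tiles in one supertile with linearly independent control-point displacements, then invoke Proposition \ref{prop_sum-of-Xi_includes_lattice} and Corollary \ref{thm_alg-coin_equiv_Xi-includes-lattice}; you merely spell out the geometric "by repetitivity we may take" step that the paper leaves implicit. (The paper also records a second, shorter proof via Kenyon's theorem that $\Xi$ lies in a lattice, but that is optional.)
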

\begin{proof}
       We give two proofs and the first one relies on Proposition
       \ref{prop_sum-of-Xi_includes_lattice}.
       By replacing $Q$ and $\omega$ with their powers, by repetitivity we may take a
       $T^{(0)}\in\calT$, a linearly independent $\{v_{1},v_{2},\ldots ,v_{d}\}\subset\R^{d}$ and
       $T^{(0)}_{1},T^{(0)}_{2},\ldots T^{(0)}_{d}\in\omega(T^{(0)})$ such that
       \begin{itemize}
       \item $T^{(0)}_{1},T^{(0)}_{2},\ldots T^{(0)}_{d}$ are the same tile type as $T^{(0)}$, and
       \item $c(T^{(0)}_{i})-Q(c(T^{(0)}))=v_{i}$ for each $i$.
       \end{itemize}
       The first statement of this theorem is proved by Proposition \ref{prop_sum-of-Xi_includes_lattice}.
       The second statement is seen by Corollary \ref{thm_alg-coin_equiv_Xi-includes-lattice}.
       
       The second proof does not use Proposition
       \ref{prop_sum-of-Xi_includes_lattice} and relies on a result by Kenyon, which is found in
       \cite[Theorem 5.1]{Solomyak_eigenfunction}, or its generalization \cite[Theorem 4.1]{LeeSol:08}.
       There is a basis $\{b_1,b_2,\ldots ,b_d\}$ of $\R^d$ such that
       \begin{align*}
               \Xi\subset\mathbb{Z}b_1+\mathbb{Z}b_2+\cdots +\mathbb{Z}b_d.
       \end{align*}
       The group $L$ generated by $\Xi$ is a lattice of $\R^d$.        
       By the same argument as Corollary \ref{cor_case_Xi_generates_lattice},
       the first statement is proved.
       The second statement follows from
       Corollary \ref{thm_alg-coin_equiv_Xi-includes-lattice}.
\end{proof}

We will show that for irrational expansion factors, there are no full-rank infinite arithmetic
progressions in self-similar tilings
(Theorem \ref{thm_complete_picture_for_self-similar}).

In fact, we can prove a stronger statement that $\calT$ is limit-periodic, which is defined as follows.
To define it, recall the densities of patches are defined in \eqref{def_density} in page \pageref{def_density}.
Using densities, we define the limit periodic tilings:
\begin{defi}[\cite{Dirk}, Definition 4.2]
      We say that
      a tiling $\calS$ is limit periodic if there are a subset $\calP\subset\calS$ of
      zero density and a decreasing sequence $L_{1}\supset L_{2}\supset\cdots$ of lattices of $\R^{d}$
      such that for any $T\in\calS\setminus\calP$ there is $n$ with
      $\{T+x\mid x\in L_{n}\}\subset\calS$.
 \end{defi}

For example, the period doubling sequence and the repetitive fixed points of the chair substitution
 are limit periodic (\cite{BMS-limit_periodic}).
 
\begin{rem}
      It is hard to find the definition of limit-periodic tilings in the literature and different authors
      seem to use the term for different meanings.
      Limit-periodic tilings are geometric analogues for Toeplitz sequences \cite{JK}.
      The regular Toeplitz sequences have pure discrete dynamical spectrum, but
      there are non-regular Toeplitz sequences with non-zero continuous spectrum
      \cite{IL}, which means a ``Toeplitz structure'' in a definition of limit-periodic tilings
      does not imply pure discrete dynamical spectrum.
     The definition of limit-periodicity in \cite{Baake-Grimm_limit-periodic} seems to
     include pure discrete dynamical spectrum, whereas in \cite{GK} the authors do
     not include it.
     Here, we adopt the definition in \cite{Dirk} and do not assume pure discrete spectrum.
     Note that by
     Theorem \ref{full-rank-infiniteAP_implies_pp}, if $\calT$ is self-similar
     with $Q=nI$, where $n$ is a positive 
     integer and $I$ is the identity map, the limit periodicity of $\calT$ implies the pure 
     discrete dynamical spectrum.
\end{rem}

We now have the following theorem.
Note that we assumed the conditions in
Setting \ref{setting_existence_fullrankAP} at the beginning of this subsection.

\begin{thm}\label{thm_limit_periodic}
      If $\calT$ has pure discrete dynamical spectrum and
      satisfies the equivalent conditions in Corollary \ref{thm_alg-coin_equiv_Xi-includes-lattice},
      then the tiling $\calT$ is limit periodic.
      In particular, there is a lattice $L$ and a
      patch $\calP\subset\calT$ of zero density such that,
       if $T\in\calT\setminus\calP$, then $\{T+x\mid x\in Q^{l}(L)\}\subset\calT$ 
       for some natural number $l$.
\end{thm}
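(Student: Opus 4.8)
\emph{Overall plan.} The plan is to produce a $Q$-invariant lattice $L$ along which a full-rank progression of some tile sits inside $\calT$, to use $L_{n}:=Q^{n}(L)$ as the decreasing sequence of lattices in the definition of limit-periodicity, and to show that the set of tiles that never become $L_{n}$-periodic is null. First I would use the hypotheses to fix a tile $T_{0}\in\calT$ and a lattice $L$ with $Q(L)\subseteq L$ and $\{T_{0}+x\mid x\in L\}\subseteq\calT$: condition~1 of Corollary~\ref{thm_alg-coin_equiv_Xi-includes-lattice} already yields a progression $\{T'+x\mid x\in Q^{K}(L')\}\subseteq\calT$, and since $Q(\Xi)\subseteq\Xi$ and, by pure discreteness, $Q^{N}(\Xi)\subseteq\Lambda_{i}-\eta$ for suitable $N,i,\eta$ (Theorem~\ref{thm_alg_coin_iff_pp}), one passes to a power of $\omega$ (equivalently of $Q$) and to a finite-index sublattice to arrange $Q(L)\subseteq L$, exactly as in the setting of Theorem~\ref{full-rank-infiniteAP_implies_pp}. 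Then $(L_{n})_{n}$ is a decreasing sequence of lattices, and applying $\omega^{n}$ to the progression gives $\{\omega^{n}(T_{0})+y\mid y\in L_{n}\}\subseteq\calT$ for all $n$.

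\emph{The exceptional set.} For each $n$ set $\calP_{n}=\{T\in\calT\mid T+L_{n}\not\subseteq\calT\}$ and $\calP=\bigcap_{n\geq 1}\calP_{n}$. By construction, if $T\in\calT\setminus\calP$ then $T\notin\calP_{n}$ for some $n$, i.e. $\{T+x\mid x\in Q^{n}(L)\}\subseteq\calT$; this is the ``in particular'' assertion, and together with the chain $(L_{n})$ it verifies the definition of a limit-periodic tiling once we know $\dens\calP=0$. Since $\calP\subseteq\calP_{n}$ for every $n$, it suffices to show $\dens_{(A_{k})}(\calP_{n})\to 0$ as $n\to\infty$ uniformly over all van Hove sequences $(A_{k})_{k}$; I expect an exponential bound $\dens(\calP_{n})\le Cr^{n}$.

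\emph{The density estimate.} This is where pure discreteness does the work. Every tile lying inside one of the visible copies $\omega^{n}(T_{0})+y$ ($y\in L_{n}$) already repeats along $L_{n}$, because $T\in\omega^{n}(T_{0})+y$ and $z\in L_{n}$ give $T+z\in\omega^{n}(T_{0})+(y+z)\subseteq\calT$; hence the ``manifestly good'' tiles have density at least $\vol(\supp T_{0})\big/\vol(\R^{d}/L)$. To push the good density up to $1$, I would invoke the exponential overlap coincidence available under pure discreteness and the Meyer property (Lemma~\ref{equivalence-to-coincidence}, Theorem~\ref{sufficient-condition-ppd}), giving $1-\dens(D_{Q^{n}b})\le Cr^{n}$ for each $b$ in a basis of $L$, where $D_{w}$ is as in \eqref{def_Dx}. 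Using the elementary inclusion $D_{u+v}\supseteq D_{u}\cap D_{v}\cap(D_{u}+v)$ and translation-invariance of patch densities, this propagates to $1-\dens(D_{w})\le C(R)\,r^{n}$ for all $w\in L_{n}$ whose coordinates with respect to $Q^{n}b_{1},\dots,Q^{n}b_{d}$ are bounded by $R$, so that a tile repeats along $L_{n}$ within any fixed radius outside an exponentially small density set. To turn finite-range periodicity into genuine $L_{n}$-periodicity I would combine the persistence of coincidences under inflation ($S\in D_{Q^{n}z}$ forces $\omega^{j}(S)\subseteq\calT$ and $\omega^{j}(S)-Q^{n+j}z\subseteq\calT$, hence $\omega^{j}(D_{Q^{n}z})\subseteq D_{Q^{n+j}z}$) with the precise algebraic coincidence $Q^{N}(\Xi)\subseteq\Lambda_{i}-\eta$ and the $Q$-invariance of $L$: the periodicity witnessed on a bounded window is pulled back through the substitution to the whole lattice orbit, while the error density is multiplied only by a bounded factor at each inflation step. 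Balancing the growing radius against the decaying coincidence error then yields $\dens(\calP_{n})\to 0$, completing the verification of limit-periodicity.

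\emph{Main obstacle.} The delicate point is this last step: promoting finite-range periodicity to honest $L_{n}$-periodicity for all but a vanishing-density set of tiles, with estimates uniform over van Hove sequences. This is where one must use the specific inclusion $Q^{N}(\Xi)\subseteq\Lambda_{i}-\eta$ (not merely the existence of coincidences) and the mutual compatibility of the individual directional periodicities; the bare density statements $\dens(D_{Q^{n}b_{j}})\to 1$ are not by themselves sufficient.
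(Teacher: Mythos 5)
Your overall architecture --- extract a $Q$-invariant lattice $L$ from condition~1 of Corollary \ref{thm_alg-coin_equiv_Xi-includes-lattice}, use $L_n=Q^n(L)$ as the decreasing chain, and reduce everything to showing that the exceptional set $\calP=\bigcap_n\calP_n$ has zero density --- matches the paper's, and the reductions up to the density estimate are sound. But the density estimate itself contains a genuine gap, which you correctly locate but do not close. Knowing $1-\dens(D_{w})\le C(R)\,r^{n}$ for every $w\in L_n$ with coordinates bounded by $R$ controls only the finite intersections $\bigcap_{\|z\|\le R}D_{Q^{n}z}$; the set of genuinely $L_n$-periodic tiles is the infinite intersection $\bigcap_{z\in L}D_{Q^{n}z}$, and an infinite intersection of sets each of density at least $1-\e$ can perfectly well have density zero. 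Your proposed remedy --- persistence of coincidences under inflation, $\omega^{j}(D_{Q^{n}z})\subseteq D_{Q^{n+j}z}$ --- preserves the density of each individual coincidence set but does not enlarge the range of $z$ over which the intersection is controlled, so ``pulling the bounded-window periodicity back to the whole lattice orbit'' is not actually achieved; balancing a growing radius $R_n$ against the decaying error $r^{n}$ still yields only finite-range periodicity. (Your first observation does give a set of fully $L_n$-periodic tiles of density at least $\vol(\supp T_0)/\vol(\R^d/L)$, but that is bounded away from $1$ and does not improve under iteration.)

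The paper closes this gap with a device absent from your outline: since $\Xi(\calT)$ is a Meyer set, there are only finitely many equivalence classes of overlaps (Lemma \ref{equivclass_overlap_finite}), and for a fixed tile $S$ the coincidence patch $\omega^{k}(S)\cap\omega^{k}(\calT-x)$ depends only on the finite datum $\{[S,x,T]\mid T\in\calT\sqcap(\Int S+x)\}$. Consequently the family $\{\omega^{k}(S)\cap\omega^{k}(\calT-x)\mid x\in\Xi\}$ has at most $N=2^{N_{0}}$ distinct members ($N_{0}$ being the number of overlap classes), so the infinite intersection over $x\in\Xi$ is, supertile by supertile, an intersection of at most $N$ sets, uniformly in $S$ and $k$. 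Combined with a uniform geometric convergence rate for a single overlap (the paper's Lemma \ref{lem_for_limit-periodic}, with factor $1-V_{\min}/(DV_{\max})$ per subdivision step), this bounds the density of $\calT\setminus\bigcap_{x\in\Xi}D_{Q^{k}x}$ by $N\e/V_{\min}$, which is what finishes the proof. Note also that the algebraic coincidence $Q^{N}(\Xi)\subseteq\Lambda_{i}-\eta$ is used only to obtain $Q^{M}L\subset\Xi$, so that tiles in the intersection over $\Xi$ are automatically $Q^{k+M}(L)$-periodic; your hope that it would instead promote finite-range periodicity to full lattice periodicity is exactly where the argument, as written, would fail.
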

\begin{proof}
       The proof is given in Appendix, and here, we shall give an intuitive explanation of the idea of the
       proof. We recall the set $D_{x}=\calT\cap (\calT-x)$ in \eqref{def_Dx}. For each $k=1,2,\ldots$, 
       consider the set
       \begin{align}
              \bigcap_{x\in\Xi}D_{Q^{k}x}.\label{eq_bigcapDx}
       \end{align}
       If $T$ is in this set, then for all $x\in\Xi$, we have $T\in\calT-Q^{k}(x)$.
       By assumption, there is an $M>0$ such that $Q^{M}(L)\subset\Xi$, and so
       for each $x\in L$, we have $T+Q^{k+M}(x)\in\calT$. In other words, $T$ is a part of a full-rank
       infinite arithmetic progression. If the density of the set \eqref{eq_bigcapDx} is close to $1$,
        then there are ``many'' $T$ in this set, and each of such $T$ 
        satisfies the condition $\{T+Q^{M+k}(x)\mid x\in L\}\subset\calT$.
        Thus it suffices to show that the set \eqref{eq_bigcapDx} has large density.
        More precisely, it suffices to show the density of \eqref{eq_bigcapDx} tends to $1$ as
        $k\rightarrow\infty$.
        
        This is seen by modifying the proof for the claim that an overlap coincidence implies
        an algebraic coincidence \cite{Lee_inter_model_set}. The set \eqref{eq_bigcapDx} is written as
        \begin{align*}
              \bigcap_{x\in\Xi}D_{Q^{k}(x)}=\bigcup_{T\in\calT}\bigcap_{x\in\Xi}\omega^{k}(T)\cap
              \omega^{k}(\calT-x).
        \end{align*}
        In other words, for
        each $T\in\calT$, we can consider all possible overlaps
        $(T,x,S)$. By inflation, it gives rise to coincidences and the set of all coincidences for this overlap
        \textcolor{red}{by the $k$th inflation}
        is $\omega^k(T)\cap\omega^k(S-x)$.
        Inside the supertile $\omega^k(T)$, there are other sets of coincidences
        $\omega^k(T)\cap\omega^k(S'-x)$ with the same $x$ but a different tile $S'$. Fixing $x$,
        we collect
        all such coincidences, and the resulting set is $\omega^k(T)\cap\omega^k(\calT-x)$.
        The ratio of the area that this set of coincidences covers to the area that the supertile
        $\omega^k(T)$ covers tends to $1$, by the argument of Lemma \ref{equivalence-to-coincidence}.
        Moreover, since there are only finitely many overlaps, the intersection
       \begin{align}
                \bigcap_{x\in\Xi}\omega^{k}(T)\cap
              \omega^{k}(\calT-x)
              \label{eq_intersection_overlaps_for_T}
       \end{align}
       is in fact a finite intersection.
       For each $x\in\Xi$, the set $\omega^{k}(T)\cap \omega^{k}(\calT-x)$
       grows, and so the ratio of the area that the patch
       \eqref{eq_intersection_overlaps_for_T} covers to the support $\supp\omega^{k}(T)$ tends to $1$.
       
       This happens for each $T\in\calT$, and again by using the fact that there are
       only finitely many overlaps up to equivalence, the convergence of the ratio of the area of
       \eqref{eq_intersection_overlaps_for_T} to $1$ is uniform for all
       $T\in\calT$. This in turn implies that the density of \eqref{eq_bigcapDx} tends to $1$.
\end{proof}

\begin{cor}\label{cor_sufficient_conditions_for_limit-period}
       Suppose that the tiling $\calT$ has pure discrete dynamical spectrum.
      Moreover, assume one of the following conditions:
      \begin{enumerate}
      \item the group generated by $\Xi$ is a lattice;
      \item there is a basis $\{b_1,b_2,\ldots ,b_d\}$ of $\R^d$ that satisfies the two
      assumptions in Proposition \ref{prop_sum-of-Xi_includes_lattice};
      \item there is a positive integer $n$ such that $Q=nI$.
      \end{enumerate}
      Then $\calT$ is limit-periodic.
\end{cor}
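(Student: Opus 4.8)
The plan is to derive Corollary \ref{cor_sufficient_conditions_for_limit-period} directly from Theorem \ref{thm_limit_periodic} by checking, in each of the three cases, that the tiling satisfies the equivalent conditions in Corollary \ref{thm_alg-coin_equiv_Xi-includes-lattice}. Since $\calT$ is already assumed to have pure discrete dynamical spectrum, Theorem \ref{thm_limit_periodic} then immediately yields limit-periodicity, so the entire content of the corollary is the verification that one of the conditions 1.--4. of Corollary \ref{thm_alg-coin_equiv_Xi-includes-lattice} holds. I expect this to be short, as the hard analytic work has already been done in Theorem \ref{thm_limit_periodic} and in the propositions feeding Corollary \ref{thm_alg-coin_equiv_Xi-includes-lattice}.

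For case 1., where the group $L$ generated by $\Xi$ is a lattice, I would invoke the argument already given in Corollary \ref{cor_case_Xi_generates_lattice}: since $\Xi$ is relatively dense, pick $R$ with $\Xi + B_R = \R^d$, note that $L \cap B_R$ is finite and hence contained in a bounded number of sumsets of $\Xi$, and conclude that $L = \underbrace{\Xi + \cdots + \Xi}_{M+1\text{ times}}$ for some $M$; this gives condition 4. (indeed 3.) of Corollary \ref{thm_alg-coin_equiv_Xi-includes-lattice} with this lattice $L$, so $\calT$ satisfies the equivalent conditions there. For case 2., where there is a basis $\{b_1,\ldots,b_d\}$ satisfying the two assumptions of Proposition \ref{prop_sum-of-Xi_includes_lattice} (taking $e = d$), Proposition \ref{prop_sum-of-Xi_includes_lattice} directly provides a positive integer $M$ with $\spa_{\Z}\{b_1,\ldots,b_d\} \subset \underbrace{\Xi + \cdots + \Xi}_{M\text{ times}}$; since the $b_i$ form a basis, $L := \spa_{\Z}\{b_1,\ldots,b_d\}$ is a lattice and condition 4. of Corollary \ref{thm_alg-coin_equiv_Xi-includes-lattice} holds for it. For case 3., where $Q = nI$, Theorem \ref{thm_self-similar_inf-fullrank-AP} asserts the existence of a lattice $L$ and a positive integer $M$ with $L \subset \underbrace{\Xi + \cdots + \Xi}_{M\text{ times}}$, which is again condition 4.

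Having verified in all three cases that $\calT$ satisfies the equivalent conditions of Corollary \ref{thm_alg-coin_equiv_Xi-includes-lattice} for an appropriate lattice $L$, I would then apply Theorem \ref{thm_limit_periodic} to the pair $(\calT, L)$ to conclude that $\calT$ is limit periodic, which is exactly the assertion of the corollary. The only point requiring a word of care is that in cases 2. and 3. the lattice $L$ used is the one produced by the relevant proposition/theorem rather than the group generated by all of $\Xi$, but this is immaterial since Corollary \ref{thm_alg-coin_equiv_Xi-includes-lattice} and Theorem \ref{thm_limit_periodic} are stated for an arbitrary discrete subgroup $L$ with the stated sumset property; I expect no genuine obstacle here, as this is purely an assembly of the already-proved ingredients.
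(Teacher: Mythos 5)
Your proposal is correct and follows essentially the same route as the paper: the paper's own proof is a one-line assembly citing Corollary \ref{cor_case_Xi_generates_lattice}, Proposition \ref{prop_sum-of-Xi_includes_lattice} and Theorem \ref{thm_self-similar_inf-fullrank-AP} to verify the hypotheses of Theorem \ref{thm_limit_periodic} in the three respective cases. Your additional remark that the lattice $L$ in cases 2 and 3 need not be the group generated by $\Xi$ is a fair point of care (one also uses $Q(L)\subset L$ to pass to the form $Q^{K}(L)\subset\Xi+\cdots+\Xi$ required in Corollary \ref{thm_alg-coin_equiv_Xi-includes-lattice}), but this matches what the paper implicitly does.
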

\begin{proof}
       By using Theorem \ref{thm_limit_periodic},
       this follows from Corollary \ref{cor_case_Xi_generates_lattice},
       Proposition \ref{prop_sum-of-Xi_includes_lattice} and
        Theorem \ref{thm_self-similar_inf-fullrank-AP}.
\end{proof}

\begin{ex}
       The block substitution (Example \ref{ex_block_substi}) satisfies the first condition in
       Corollary \ref{cor_sufficient_conditions_for_limit-period}. If the substitution satisfies
       the coincidence condition (a high-dimensional version of Dekking's condition
       \cite[Definition 6.5]{Q}, which follows from the overlap algorithm 
       \cite{Solomyak_Tiling}),
       then it is limit-periodic.

       The self-similar tiling by the 
       sphinx substitution \cite{Godreche-sphinx} are known to be limit-periodic.
       We give another proof for this fact: we can check that the sphinx substitution
       satisfies the overlap coincidence \cite[Example 7.2]{Solomyak_Tiling};
       by Corollary \ref{cor_sufficient_conditions_for_limit-period},
       the fact that the expansion factor is $2$ implies the sphinx tilings are
       limit-periodic.
\end{ex}

\section{The conclusion for full-rank infinite arithmetic progressions in self-similar tilings}\label{section_one-dim}

By what we have proved, we have the following complete picture for the 
existence/non-existence of full-rank infinite arithmetic progressions in any self-similar tiling:

\begin{thm}\label{thm_complete_picture_for_self-similar}
      Let $\mathcal{T}$ be a self-similar tiling in $\mathbb{R}^d$ with a
      primitive FLC substitution and an expansion factor $\lambda$.
      \begin{enumerate}
            \item If $\lambda$ is irrational, then there are no full-rank infinite arithmetic progressions in 
                      $\mathcal{T}$.
            \item If $\lambda$ is rational (i.e. it is a natural number), then the following three
            conditions are equivalent:
            \begin{enumerate}
            \item   $\mathcal{T}$ admits full-rank infinite
                     arithmetic progressions;
            \item $\mathcal{T}$ is limit periodic;
            \item $\mathcal{T}$ has pure discrete dynamical spectrum.
            \end{enumerate}
       \end{enumerate}
\end{thm}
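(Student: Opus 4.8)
The plan is to handle the rational and irrational cases separately, in each case assembling results already proved in Sections \ref{section_non-existence} and \ref{section_full-rank-AP}; almost nothing new is needed beyond choosing the right earlier statement.

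For part 2 (so $\lambda=n\in\Z_{>0}$ and $Q=nI$) I would prove the cycle $(a)\Rightarrow(c)\Rightarrow(b)\Rightarrow(a)$. The implication $(a)\Rightarrow(c)$ is immediate from Theorem \ref{full-rank-infiniteAP_implies_pp}: a full-rank infinite arithmetic progression is, by repetitivity and Lemma \ref{lem_equivalence_full-rankAP}, legal in $\calT$ itself, so there is a tile $T_0\in\calT$ and a full-rank lattice $L$ with $\{T_0+v\mid v\in L\}\subset\calT$, and $Q(L)=nL\subset L$ holds for every lattice; hence the dynamical system has pure discrete spectrum. The implication $(c)\Rightarrow(b)$ is exactly the third case of Corollary \ref{cor_sufficient_conditions_for_limit-period}. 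Finally $(b)\Rightarrow(a)$ is just unwinding the definition of limit periodicity: with exceptional zero-density patch $\calP_0\subset\calT$ and lattices $L_1\supset L_2\supset\cdots$, choose any $T\in\calT\setminus\calP_0$ (such $T$ exists since $\dens\calP_0=0$ while $\calT$ has full density) and an $n$ with $\{T+x\mid x\in L_n\}\subset\calT$; this is a full-rank infinite arithmetic progression for the one-tile patch $\{T\}$ and a basis of $L_n$.

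For part 1 I would argue by contradiction: assume $\lambda$ is irrational and $\calT$ admits a full-rank infinite arithmetic progression. As observed in the proof of Theorem \ref{full-rank-infiniteAP_implies_pp}, the existence of this progression together with FLC forces $\Xi(\calT)$ to be a Meyer set (every return vector is the displacement to the nearest tile of the progression, plus a lattice vector, plus a displacement off the progression, and the two displacement sets are finite by FLC). Now apply Theorem \ref{thm_Lee-Solomyak_Meyer_property} to $Q=\lambda I$: this $Q$ is diagonalizable over $\C$ with all eigenvalues equal to $\lambda$ (trivially mutually algebraically conjugate, all of multiplicity $d$), so the Meyer property is equivalent to $\spec(Q)=\{\lambda\}$ being a Pisot family, which for a single irrational algebraic integer $\lambda$ says precisely that $\lambda$ is a Pisot number. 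Thus $\lambda$ would be irrational \emph{and} Pisot; but then Lemma \ref{lem_equivalence_full-rankAP} converts the full-rank progression into arbitrarily long one-dimensional arithmetic progressions with a fixed nonzero difference (a basis vector $b_1$ applied to a translate of the given patch), contradicting Theorem \ref{thm_using_conti_eigenft}. Hence no full-rank infinite arithmetic progression exists when $\lambda$ is irrational.

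The one step carrying real content, and the one I would be most careful with, is the chain ``full-rank progression $\Rightarrow$ $\Xi(\calT)$ Meyer $\Rightarrow$ $\lambda$ Pisot'' in part 1: it is what lets us bypass all non-Pisot irrational expansion factors at once (for which neither Theorem \ref{thm_using_conti_eigenft} nor, in general, Theorem \ref{thm_internal_space} applies), since a non-Pisot irrational $\lambda$ already fails to give a Meyer $\Xi(\calT)$ and so cannot support any full-rank progression, while a Pisot irrational $\lambda$ is killed by the continuous-eigenfunction argument. I would also check the harmless identifications used tacitly — that ``$\Xi(\calT)$ is Meyer'' coincides with $\calT$ having the Meyer property of page \pageref{def_Meyer-set} for an FLC tiling, that passing between a full-rank progression legal in some $\calS\in X_\calT$ and one legal in $\calT$ is free by repetitivity, and that a rational expansion factor of a self-similar FLC tiling is automatically a natural number since $\lambda>1$ is an algebraic integer. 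None of these is a genuine obstacle; the proof is essentially bookkeeping on top of the Meyer-set dichotomy.
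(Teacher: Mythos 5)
Your proof of part 2 matches the paper's (which simply cites Theorem \ref{full-rank-infiniteAP_implies_pp} and Corollary \ref{cor_sufficient_conditions_for_limit-period}); your explicit unwinding of $(b)\Rightarrow(a)$ from the definition of limit periodicity is the only step the paper leaves tacit, and it is correct.

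For part 1 you take a genuinely different route, and it works. The paper's own argument is elementary and self-contained: from $\{T+x\mid x\in L\}\subset\calT$ it applies $\omega^N$ (with $N$ chosen so that $\omega^N(T)$ contains a translate $T+y$) to produce a second family $\{T+y+\lambda^N x\mid x\in L\}\subset\calT$, and since $\lambda^N$ is irrational the group $\lambda^N L+L$ is dense, so two distinct copies of $T$ land within $\e$ of each other and overlap --- contradicting the tiling property. This uses nothing beyond the substitution structure (the only care needed is choosing $N$ with $\lambda^N\notin\Q$, which is possible since $\lambda\notin\Q$). Your argument instead runs the Meyer dichotomy: a full-rank progression forces $\Xi(\calT)$ to be Meyer (exactly the observation opening the proof of Theorem \ref{full-rank-infiniteAP_implies_pp}), Theorem \ref{thm_Lee-Solomyak_Meyer_property} applied to $Q=\lambda I$ then forces $\lambda$ to be Pisot, and the irrational Pisot case is killed by Theorem \ref{thm_using_conti_eigenft} via the one-dimensional sub-progressions supplied by Lemma \ref{lem_equivalence_full-rankAP}. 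Both proofs are valid; yours buys a conceptual explanation (non-Pisot irrational factors are excluded already at the level of the Meyer property, Pisot ones by continuous eigenfunctions) at the cost of invoking Lee--Solomyak and the eigenfunction theorem as black boxes, whereas the paper's is shorter and more elementary. The identifications you flag --- $\Xi(\calT)$ Meyer versus the control-point Meyer property, legality in some $\calS\in X_\calT$ versus in $\calT$, and $\lambda$ rational implying $\lambda\in\N$ for an algebraic-integer expansion factor --- are indeed standard and are used freely elsewhere in the paper, so no gap there.
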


\begin{proof}
       The second statement is clear by Theorem \ref{full-rank-infiniteAP_implies_pp} and
        Corollary \ref{cor_sufficient_conditions_for_limit-period}.
        We shall prove the first statement.
        
        Suppose that $\lambda$ is irrational.
        The statement follows from the observation that if we have two full-rank
        infinite arithmetic progressions with incommensurate distances, then there are
        two different tiles that are arbitrarily close,
        which contradicts the  fact that $\calT$ is a tiling.
        We shall elaborate the argument.
         We assume there are a
        $T\in\mathcal{T}$ and a lattice $L$ of $\R^d$ such that
        $\{T+x\mid x\in L\}\subset\calT$ and we will obtain a contradiction.
        We take a basis $\{b_1,b_2,\ldots ,b_d\}$ of $\R^d$
        such that $L$ is the $\mathbb{Z}$-span of this basis.
        
        We can take a natural number $N$ such that 
        there is a $y\in\R^d$ with $T+y\in\omega^N(T)$. For each $\varepsilon>0$, we can take
        integers $n_j,m_j$ ($j=1,2,\ldots, d$), such that
        \begin{align*}
              0<\|\lambda^N\sum_{j=1}^dn_jb_j+y-\sum_{j=1}^dm_jb_j\|<\e.
        \end{align*}
        These inequalities show that the two tiles $T+\lambda^N\sum n_jb_j+y$ and
        $T+\sum m_jb_j$ are different but their interiors overlap.
        Since these tiles are both in $\calT$, we obtain a contradiction.
\end{proof}

\begin{rem}
      Although we described Theorem \ref{thm_complete_picture_for_self-similar} as a
      ``complete picture'', it is not clear when $\mathcal{T}$ has pure discrete dynamical spectrum.
      The Pisot conjecture, which states a self-similar tiling for any irreducible substitution
      with Pisot expansion factor has pure discrete dynamical spectrum, has not been proved.
      However, we have an algorithm to decide if a given self-similar tiling has pure discrete dynamical
      spectrum (\cite{AL_algorithm,Solomyak_Tiling}).
\end{rem}

For the one-dimensional case,
if the substitution $\omega$ is also irreducible, we can prove the first claim of 
Theorem \ref{thm_complete_picture_for_self-similar} by using the Perron-Frobenius theory, as
follows.

Assume that $\omega$ has an
 indexed alphabet $\calA=\{T_{1},T_{2},\ldots ,T_{m}\}$ and is primitive.
 Assume also that the support of each tile in $\calA$ is an interval.
By the Perron-Frobenius theorem, there is a positive eigenvalue that is
greater than any other eigenvalues in modulus. We call it the Perron-Frobenius eigenvalue, and
for substitutions it coincides with the expansion factor. Let $(l_{j})_{j=1,2,\ldots, m}$ and 
$(r_{j})_{j=1,2,\ldots, m}$ be left and right eigenvectors, respectively, with positive elements, which
exist uniquely up to scalar multiples.

\begin{lem}\cite[Lemma 4.3]{OverlapStrong}
\label{Linindep}
If $\omega$ is primitive and irreducible, then the $d$-entries of
$(l_j)_{j\in A}$ (resp. $(r_j)_{j\in A}$) are linearly independent over $\Q$.
\end{lem}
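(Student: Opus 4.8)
The plan is to reduce the assertion to a purely linear-algebraic fact about an integer matrix whose characteristic polynomial is irreducible, and then exploit the cyclic-vector phenomenon for such matrices.

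First I would set $M=M_{\om}$, the $m\times m$ non-negative integer substitution matrix; by hypothesis $\om$ is irreducible, so $p(x)=\det(xI-M)$ is irreducible over $\Q$ of degree $m$. The right eigenvector $(r_j)_j$ spans the one-dimensional $\lambda$-eigenspace of $M$ (the PF eigenvalue is simple), and the left eigenvector $(l_j)_j$ is a right $\lambda$-eigenvector of $M^{T}$, which has the same characteristic polynomial $p$. Hence it suffices to prove: if $A$ is an $m\times m$ integer matrix with characteristic polynomial irreducible over $\Q$ and $w\in\R^{m}\setminus\{0\}$ satisfies $Aw=\lambda w$ for a real root $\lambda$ of that polynomial, then the coordinates $w_1,\dots,w_m$ are linearly independent over $\Q$. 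Applying this with $A=M$ handles $(r_j)_j$, and with $A=M^{T}$ handles $(l_j)_j$.

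To prove this, suppose for contradiction that $q=(q_1,\dots,q_m)\in\Q^{m}$ is nonzero with $\sum_j q_j w_j=q^{T}w=0$. Since $Aw=\lambda w$, for every $k\geq 0$ we get $\bigl((A^{T})^{k}q\bigr)^{T}w=q^{T}A^{k}w=\lambda^{k}(q^{T}w)=0$. Now I would invoke the cyclic-vector property: regarding $\Q^{m}$ as a $\Q[x]$-module via $A^{T}$, the annihilator of the nonzero vector $q$ is generated by its minimal polynomial, which divides $p$ by Cayley--Hamilton and is nontrivial since $q\neq 0$; by irreducibility it equals $p$, so $q,A^{T}q,\dots,(A^{T})^{m-1}q$ are linearly independent over $\Q$ and therefore form a basis of $\Q^{m}$, hence an $\R$-basis of $\R^{m}$. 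Together with the previous display this forces $v^{T}w=0$ for all $v\in\R^{m}$, i.e. $w=0$, contradicting the fact that the Perron--Frobenius eigenvectors are strictly positive by primitivity.

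The main obstacle — essentially the only nonroutine point — is the cyclic-vector step, where both hypotheses enter: primitivity guarantees the relevant eigenvector is genuinely nonzero (indeed positive), and irreducibility of the characteristic polynomial forces every nonzero vector to be cyclic for $A^{T}$. Everything else is bookkeeping with transposes and with $\Q$- versus $\R$-spans. One could alternatively phrase the cyclic-vector step through the isomorphism $\Q^{m}\cong\Q[x]/(p(x))\cong\Q(\lambda)$ under which $A^{T}$ becomes multiplication by $\lambda$, but the annihilator-ideal argument is the most economical.
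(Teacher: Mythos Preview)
Your argument is correct and is the standard proof of this well-known fact: an integer matrix with $\Q$-irreducible characteristic polynomial has every nonzero rational vector cyclic for (the transpose action), which forces the orthogonal complement of any eigenvector to contain no nonzero rational vector. Note that the paper does not actually supply a proof of this lemma; it is quoted from \cite[Lemma~4.3]{OverlapStrong}, so there is no in-paper argument to compare against. One small quibble with your closing commentary: primitivity is not what makes $w\neq 0$ (eigenvectors are nonzero by definition); its role here is only to guarantee that the Perron--Frobenius eigenvalue and the positive left/right eigenvectors $(l_j)$, $(r_j)$ exist and are the objects under discussion. The mathematical content of the linear-independence step uses only irreducibility.
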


Recall that by primitivity, the corresponding tiling dynamical system is uniquely ergodic, which is
equivalent to having uniform patch frequency.

We now prove that an 
infinite arithmetic progression cannot be observed in a self-similar tiling
generated by an irreducible primitive substitution $\omega$ with the cardinality of the alphabet
greater than $1$.
Note that the irreducibility implies that $\lambda$ is irrational.

\begin{thm}
\label{IrrPrm}
Let $\omega$ be a one-dimensional
 irreducible primitive substitution with interval supports of the tiles in the alphabet
 and with the cardinality of the alphabet greater than 1. 
 Let $\Tk_{0}$ be a repetitive fixed point for $\omega$.
If $\calT$ is a tiling in $X_{\calT_{0}}$,
then for any tile $T\in \Tk$ and
 non-zero vector $v$,  the patch $\{T+kv\mid k\in\N\}$ is not included in $\Tk$.
 Therefore, there are no arbitrarily long arithmetic progressions if we fix the distance $v\neq 0$.
\end{thm}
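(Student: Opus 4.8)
The plan is to derive a contradiction from the existence of an infinite one-dimensional arithmetic progression $\{T+kv\mid k\in\N\}\subset\calT$ by examining how the lengths of the gaps between consecutive copies of $T$ must be constrained, using the unique ergodicity (uniform patch frequency) together with the $\Q$-linear independence of the entries of the right Perron--Frobenius eigenvector (Lemma \ref{Linindep}). First I would set up coordinates so that $\R^1=\R$ and normalise the lengths of the prototile intervals: since $\calT$ is a fixed point of a self-similar substitution with expansion factor $\lambda$, the length of the tile $T_j$ is, up to a common scalar, the $j$-th entry $r_j$ of the right eigenvector $(r_j)_{j=1}^m$ (this is the standard fact that the natural tile lengths form a $\lambda$-eigenvector of the transpose of the substitution matrix). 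The key point is that $v$, being a return vector of $\calT$ realised as the displacement between two translationally-equivalent tiles, must be an integer combination of the tile lengths, i.e. $v=\sum_{j=1}^m c_j r_j$ for some $c_j\in\Z$, and moreover $v\neq 0$.

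Next I would count, along the progression, the number of tiles of each type lying strictly between $T+kv$ and $T+(k+1)v$. Because the progression is infinite and $\calT$ has uniform patch frequency, the empirical frequency of each tile type $T_j$ among these interstitial tiles, averaged over $k=1,\dots,n$ as $n\to\infty$, must converge to the global frequency $\mathrm{freq}(T_j)$ of $T_j$ in $\calT$; equivalently, the average number of type-$j$ tiles per period $v$ is $\mathrm{freq}(T_j)\cdot v$ (using that $v$ is the length of one period and frequencies are normalised per unit length). On the other hand, in \emph{any} single block between $T+kv$ and $T+(k+1)v$ the number $N_{k,j}$ of type-$j$ tiles is a nonnegative integer, and the lengths must add up exactly: $\sum_{j=1}^m N_{k,j}\, r_j = v$ for every $k$ (the tiles between two consecutive copies of $T$ tile the interval of length $v$ exactly, there being no room for error since it is a genuine tiling of $\R$ and $T$ occupies the same relative position each time). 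So we get infinitely many integer vectors $(N_{k,1},\dots,N_{k,m})\in\Znn^m$ all satisfying $\sum_j N_{k,j} r_j = v$.

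Now I would invoke the $\Q$-linear independence of $r_1,\dots,r_m$ (Lemma \ref{Linindep}, applicable since $\omega$ is primitive and irreducible): this forces the integer solution of $\sum_j N_{k,j} r_j = v$ to be \emph{unique}, say $(N_1,\dots,N_m)$, independent of $k$. Hence every block between consecutive copies of $T$ contains exactly the same multiset of tile types, and in particular the same total number of tiles. Combined with the fact that $T$ itself recurs, this says the pattern of tile types along the progression is periodic with period $\sum_j N_j$ tiles. But a tiling of $\R$ by a primitive irreducible substitution with more than one prototile is non-periodic (its tile frequencies are irrational, or: the expansion factor $\lambda$ is irrational because irreducibility forces the characteristic polynomial to have degree $>1$, so a spatially periodic pattern of tile types cannot occur; alternatively apply Theorem \ref{thm_using_conti_eigenft} or the non-periodicity remark). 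A periodic sequence of tile types would mean $\calT$ has a translational period, contradicting non-periodicity. This is the contradiction, and it proves $\{T+kv\mid k\in\N\}\not\subset\calT$; the final sentence about ``arbitrarily long arithmetic progressions with fixed $v$'' then follows from Lemma \ref{equivalence-for-one-dimAP} applied with $\calP=\{T\}$.

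I expect the main obstacle to be the step asserting that $v$ must be an \emph{integer} combination of the prototile lengths and that consequently $\sum_j N_{k,j} r_j = v$ holds \emph{exactly} (not just approximately). This requires care: one must use that $v$ is a return vector between two copies of the \emph{same} prototile $T$ occupying the \emph{same} relative position within it, so that the interval of length $|v|$ between the left endpoints of $T+kv$ and $T+(k+1)v$ is partitioned \emph{precisely} by whole tiles — here the one-dimensionality and the convention $\overline{S^\circ}=S$ for tiles are what make ``no leftover room'' rigorous. A secondary subtlety is making the unique-ergodicity/frequency argument unnecessary once one has uniqueness of the integer representation: in fact the clean route is to skip averaging entirely and argue directly that uniqueness of $(N_1,\dots,N_m)$ forces eventual periodicity of $\calT$, then contradict non-periodicity; I would present it that way to keep the proof short.
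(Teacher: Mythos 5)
Your setup matches the paper's proof almost exactly: you tile the gap of length $v$ between consecutive copies of $T$ by whole tiles, observe that $v=\sum_j N_{k,j}r_j$ holds exactly, and use the $\Q$-linear independence of the tile lengths (Lemma \ref{Linindep}) to conclude that the count vector $(N_{k,1},\dots,N_{k,m})$ is the same for every $k$. Up to that point everything is sound. The gap is in the step you single out as the ``clean route'': constancy of the \emph{multiset} of tile types in each block does not imply that the pattern of tile types along the progression is periodic. The tiles within each block can occur in different orders for different $k$, so no translational period of $\calT$ is produced and the contradiction with non-periodicity never materialises.

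The correct way to finish is precisely the frequency argument you set up in your second paragraph and then discarded as unnecessary, and it is what the paper does. Since every block $\calP_k$ contains exactly $N_j$ tiles of type $j$ and has support of length $v$, the union $\bigcup_{k=0}^{m-1}\calP_k$ contains $mN_j$ tiles of type $j$ on an interval of length $mv$; by unique ergodicity (uniform patch frequency) the relative frequency of type $j$ in $\calT$ must therefore equal $N_j/\sum_i N_i\in\Q$. But these relative frequencies are, up to normalisation, the entries of a Perron--Frobenius eigenvector, which by Lemma \ref{Linindep} are $\Q$-linearly independent --- impossible once the alphabet has more than one letter, since a collection of at least two positive rationals is always $\Q$-linearly dependent. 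So keep the averaging step; it is not a detour but the essential closing move.
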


\begin{proof}
Assume that $\{ T+kv\mid k\in\N\}$ is  included in $\Tk$.
We prove that this leads to a contradiction, for the case where $v>0$.
The case where $v<0$ is proved in a similar way.
Consider a legal patch
$\calP_k\ (k\in \N)$ of interval support
whose left most tile is $T+kv$ and right most tile is the tile just before $T+(k+1)v$.
The support of the patch $\calP_k$ is an interval of the fixed length $v$ which is filled by translates of alphabets $T_j$'s with cardinalities 
$n_j$'s where $j\in \{1,2,\ldots n\}$.
Then the cardinality vector $(n_j)_{j\in \{1,2,\ldots n\}}\in \N^d$ is 
independent of the choice of $k$ by Lemma \ref{Linindep}. 
For each $m>0$, the patch $\bigcup_{k=0}^{m-1} \calP_{k}$ contains $m n_j$
translates of $T_j$ for $j\in \{1,2,\ldots ,n\}$ and has support $[0,mv]$.
By the uniform patch frequency, this shows that $r_i=n_i/(\sum_{j=1}^{n} n_j)\in \Q$ for 
$i\in \{1,2,\ldots,n\}$.
It gives a contradiction to Lemma \ref{Linindep}.
\end{proof}

\section{Further questions}
\label{sec_further_questions}
In Theorem \ref{thm_using_conti_eigenft}, we assumed that
the expansion factor $\lambda$ is
irrational and Pisot.
If $\lambda$ is rational, which means it is a natural number, there may be arbitrarily long arithmetic
progressions even if we fix the distance $x\in\R^{d}\setminus\{0\}$, as we have seen in
Example \ref{ex_chair_infinite-AP} and Example \ref{ex_table-substi}.
However, we do not understand the situation where we drop the Pisot assumption.
We do not know any examples with irrational non-Pisot expansion factors that admit one-dimensional arithmetic progressions of arbitrary length.
Moreover, the existence and absence of \emph{infinite one-dimensional} arithmetic progressions
may depend on the choice of a tiling from the hull. For example, we do not know if there is a tiling
in the hull of the table substitution that does not admit infinite one-dimensional arithmetic progressions.

The study of full-rank infinite arithmetic progressions in Section \ref{section_full-rank-AP} is  also not
complete. 
We understand the self-similar case, but for general $Q$, there are still open problems.
For example, we suspect if all the eigenvalues for $Q$ are integers and the
tiling has pure discrete dynamical spectrum, then the tiling is limit-periodic, but we do not have
a proof for this.
We have assumed several conditions, such as that 
$Q(L)\subset L$ in Theorem \ref{full-rank-infiniteAP_implies_pp}.
We do not know what happens if we drop these assumptions.

\section{Appendix}
Here, we put some proofs in the main part of the paper.
\subsection{The proof for Proposition \ref{prop_sum-of-Xi_includes_lattice}}
First we prove  Proposition \ref{prop_sum-of-Xi_includes_lattice}.
\begin{proof}
     We set $M=e\max\{n_{1},n_{2},\ldots ,n_{e}\}$ and prove that this $M$ satisfies the desired 
     condition.
     To prove this,  take arbitrary $k_{1},k_{2},\ldots ,k_{e}\in\Z$ and we shall prove that
     $\sum_{i=1}^{e}k_{i}v_{i}$ is included in $\Xi+\Xi+\cdots +\Xi$ (the result of the summation for
     $M$ $\Xi$'s).
     We use a tile map $f\colon\calT\rightarrow\calT$ which was used to define a control points map
     $c$. We can assume that $T$ and $f(T)$ are always the same tile type.
     For any $i=1,2,\ldots ,e$ such that $k_{i}\geqq 0$, by considering $n_{i}$-adic
     expansion of $k_{i}$, there are
     $k_{i,0},k_{i,1},\ldots ,k_{i,N_{i}}\in\{0,1,\ldots n_{i}-1\}$ such that
     \begin{align*}
             k_{i}=\sum_{j=0}^{N_{i}}k_{i,j}n_{i}^{N_{i}-j}.
     \end{align*}
     For $i$ with $k_{i}<0$, similarly there are $k_{i,0},k_{i,1},\ldots,k_{i,N_{i}}\in\{0,1,\ldots ,n_{i}\}$
     such that
          \begin{align*}
             -k_{i}=\sum_{j=0}^{N_{i}}k_{i,j}n_{i}^{N_{i}-j}.
     \end{align*}
     Since each $N_{i}$ can be replaced with a larger integer, we may assume that
     $N_{1}=N_{2}=\cdots N_{e}$. Let us denote this number by $N$.
     Set $I_{+}=\{i=1,2,\ldots ,e\mid k_{i}\geqq 0\}$ and $I_{-}=\{i=1,2,\ldots ,e\mid k_{i}< 0\}$.
     
     For each $i\in I_+$ and $l=1,2,\ldots,n_{i}-1$,
     we shall choose tiles 
     \begin{align*}
      S_{N+1,l,i}, S'_{0,l,i}, 
     \end{align*}
     and for each $i\in I_-$ and $l=1,2,\ldots ,n_i$, we shall chose tiles
     \begin{align*}
         T_{N+1,l,i}, T'_{0,l,i},
     \end{align*}
      all in $\calT$ and with the same tile type,
     such that
          \begin{align}
           \sum_{i\in I_{+}} \sum_{l=1}^{n_{i}-1}(c(S_{N+1,l,i})-c(S'_{0,l,i}))-
           \sum_{i\in I_{-}}\sum_{l=1}^{n_{i}-1}(c(T_{N+1,l,i})-c(T'_{0,l,i}))=\sum_{i=1}^{e}k_{i}v_{i}.
           \label{eq_sumupto_kivi}
     \end{align}
     Since all objects $c(S_{N+1,l,i})-c(S'_{0,l,i})$ and $c(T_{N+1,l,i})-c(T'_{0,l,i})$, where
     $i=1,2,\ldots,e$ and $l=1,2,\ldots n_{i}-1$, are members of $\Xi$, and the number of these
     objects is at most $M$, once we have proved this equation \eqref{eq_sumupto_kivi}, 
     we finish the proof of this proposition.
     
     First, take $i\in I_{+}$ and $l=1,2,\ldots n_{i}-1$,
     and we shall construct $S_{N+1,l,i}$ and $S'_{0,l,i}$.
     The latter is defined by $S'_{0,l,i}=f^{N+1}(T^{(0)})$. To construct the former,
     set $S_{0,l,i}=T^{(0)}$ and we choose
     $S_{1,l,i}, S_{2,l,i},\ldots, S_{N+1,l,i}$ inductively, 
     as follows. Given an $S_{j,l,i}$ of the same tile type as
     $T^{(0)}$, by assumption we can take an $S_{j+1,l,i}\in\omega(S_{j,l,i})$ of the same type as
     $T^{(0)}$ such that
     \begin{align*}
               c(S_{j+1,l,i})-Q(c(S_{j,l,i}))=
               \begin{cases}
                         v_{i}      &\text{if $k_{i,j}\geqq l$}\\
                         0            &\text{otherwise}.
               \end{cases}
     \end{align*}
     By induction, we have found $S_{N+1,l,i}$.
     
     Similarly,
       for each $i\in I_{-}$ and $l=1,2,\ldots n_{i}-1$,
     we can take $T_{0,l,i}=T^{(0)},T_{1,l,i},\ldots ,T_{N+1,l,i}\in\calT$ of the same type so that
          \begin{align*}
               c(T_{j+1,l,i})-Q(c(T_{j,l,i}))=
               \begin{cases}
                         v_{i}     &\text{if $k_{i,j}\geqq l$}\\
                         0            &\text{otherwise}.
               \end{cases}
     \end{align*}
     By induction, we have defined $T_{N+1,l,i}$.
     $T'_{0,l,i}$ is defined as $f^{N+1}(T_{0,l,i})$.
     
     Now we have defined the necessary tiles, and we shall prove the equation
     \eqref{eq_sumupto_kivi}.
     By using the fact that
     \begin{align*}
             \sum_{l=1}^{n_{i}-1}(c(S_{j,l,i})-Q(c(S_{j-1,l,i}))))=k_{i,j-1}v_{i},
     \end{align*}
     we have the following:
     \begin{align*}
             \sum_{l=1}^{n_{i}-1}(c(S_{N+1,l,i})-c(S'_{0,l,i}))&=
             \sum_{l=1}^{n_{i}-1}(c(S_{N+1,l,i})-Q^{N+1}(c(S_{0,l,i})))\\
             &=\sum_{j=1}^{N+1}\sum_{l=1}^{n_{i}-1}Q^{N-j+1}(c(S_{j,l,i})-Q(c(S_{j-1,l,i})))\\
             &=\sum_{j=1}^{N+1}Q^{N-j+1}(k_{i,j-1}v_{i})\\
             &=\sum_{j=1}^{N+1}n_{i}^{N-j+1}k_{i,j-1}v_{i}\\
             &=k_{i}v_{i}.
     \end{align*}
    
    Also, we have an equation
     \begin{align*}
             \sum_{l=1}^{n_{i}-1}(c(T_{N+1,l,i})-c(T'_{0,l,i}))=-k_{i}v_{i}.
     \end{align*}
     These two equations prove the equation \eqref{eq_sumupto_kivi}.
\end{proof}

\subsection{The proof for Theorem \ref{thm_limit_periodic}}
Here, we prove Theorem  \ref{thm_limit_periodic}.

Let $\omega$ be a primitive FLC 
substitution rule with an expansion map $Q$ and an alphabet $\calA$.
For each $P\in\calA$, the patch $\omega(P)$ is the result of ``expanding $P$ by $Q$ and then
subdividing it''.  Henceforth we also use a
``subdividing without expanding'' map $\sigma$.
By using this $\sigma$ instead of $\omega$, the proof is
simplified.
The map
$\sigma$ is defined via
\begin{align*}
      \sigma(P)=Q^{-1}(\omega(P)).
\end{align*}
(See Figure \ref{figure_subdivision-map}.)
Here, we set $Q^{k}(T)=(Q^{k}(S),l)$ for each labeled
tile $T=(S,l)$ and $k\in\Z$, and $Q^{k}(\calP)=\{Q^{k}(T)\mid T\in\calP\}$
for each patch $\calP$.
(For unlabeled tiles $T$, $Q^{k}(T)$ has the usual meaning.)
We will also ``subdivide without expanding'' $Q^{l}(P)+x$'s, where $l\in\Z$, $x\in\R^{d}$ 
and $P\in\calA$, and so
define
\begin{align*}
      \sigma(Q^{l}(P)+x)=Q^{l-1}(\omega(P))+x.
\end{align*}
For each patch $\calP$ of which elements are of the form $Q^{l}(P)+x$, we set
\begin{align*}
      \sigma(\calP)=\bigcup_{T\in\calP}\sigma(T).
\end{align*}

\begin{figure}[htbp]
\begin{center}
\includegraphics{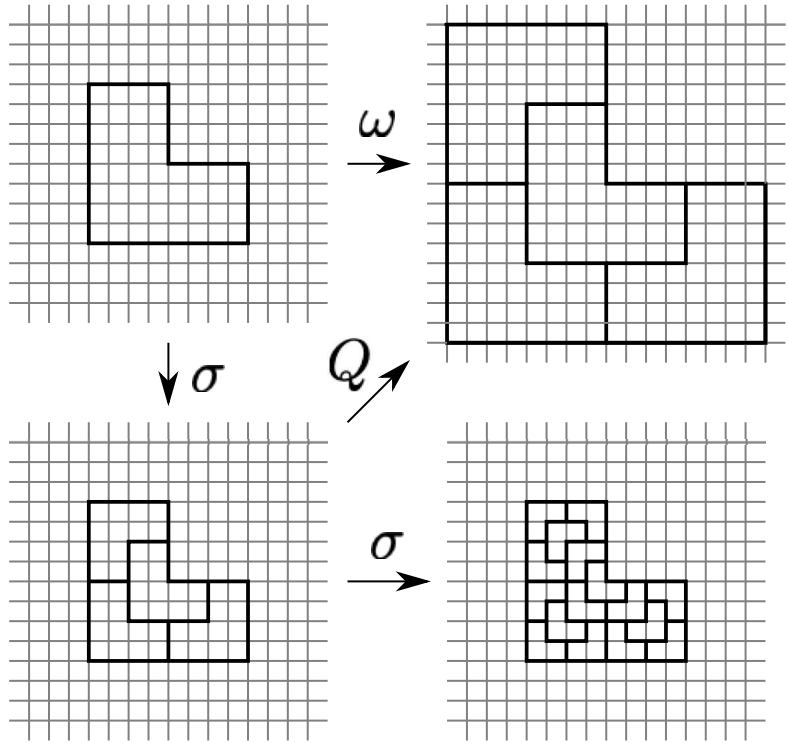}
\caption{The chair substitution $\omega$ and the corresponding subdivision map $\sigma$}
\label{figure_subdivision-map}
\label{default}
\end{center}
\end{figure}

Then we have the following properties:

\begin{enumerate}
     \item $\sigma(\calP)$ is also a patch consisting of tiles of the form $Q^{l}(P)+x$ and
              $\supp\sigma(\calP)=\supp\calP$.
     \item $\sigma\circ Q^{l}=Q^{l}\circ\sigma$ for any $l\in\Z$.
\end{enumerate}

We now assume Setting \ref{setting_existence_fullrankAP} 
in Subsection \ref{subsection_pure-discrete_implies_full-rank-AP} and
the self-affine tiling $\calT$ has pure discrete dynamical spectrum and satisfies the equivalent
conditions in Corollary \ref{thm_alg-coin_equiv_Xi-includes-lattice}.
To prove Theorem \ref{thm_limit_periodic},
we divide the proof into lemmas. First, by using Lemma \ref{equivclass_overlap_finite} and
replacing $\omega$ with its power,  without loss of generality we may assume that,
for each overlap $(S,x,T)$ for $\calT$, we have
\begin{align}
      \omega(S+x)\cap\omega(T)\neq\emptyset,
\end{align}
which is equivalent to
\begin{align}
      \sigma(S+x)\cap\sigma(T)\neq\emptyset.
\end{align}

Set $V_{\max}=\max_{P\in\calA}\vol(P)$, $V_{\min}=\min_{P\in\calA}\vol(P)$ and
$D=|\det Q|$.
(Here, $\vol$ denotes the Lebesgue measure.)

In what follows we tacitly use the following result on the boundaries $\partial T$ of the 
tiles $T$ in
self-affine tilings:

\begin{lem}[\cite{prag}]
       Let $T$ be a tile in a self-affine tiling. Then $\vol(\partial T)=0$, where $\vol$ denotes
       the Lebesgue measure and $\partial$ denotes the boundary.
\end{lem}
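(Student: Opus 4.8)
The plan is to reduce the statement to the finitely many proto-tiles and then run a measure-theoretic Perron--Frobenius argument, finishing with an overlap-counting contradiction. Write $\calA=\{T_1,\dots,T_m\}$ for the alphabet of $\omega$, put $a_i=\vol(\supp T_i)>0$ and $p_i=\vol(\partial\supp T_i)$, let $M=M_{\omega}$ be the substitution matrix, and set $\theta=|\det Q|$. Since $\supp\omega^n(T_j)=Q^n(\supp T_j)$, the tiles of $\omega^n(T_j)$ tile $Q^n(\supp T_j)$ with pairwise disjoint interiors, each interior being a nonempty open set; hence $\vol(\Int T_i)=a_i-p_i>0$, so $p_i<a_i$ for every $i$. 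Standard Perron--Frobenius considerations then apply: the tile-volume vector $\mathbf{a}=(a_i)_i$ is a strictly positive eigenvector of $M^{\mathrm T}$ with eigenvalue $\theta$ (compare $\sum_i M_{ij}a_i=\sum_{S\in\omega(T_j)}\vol(\supp S)\ge\vol(Q\supp T_j)=\theta a_j$ with a packing estimate from the disjoint interiors), so $\theta$ is the Perron eigenvalue of $M$. On the boundary side, $\partial\supp\omega^n(T_j)=Q^n(\partial\supp T_j)$ is covered by $\bigcup_{S\in\omega^n(T_j)}\partial\supp S$ (a point on the outer boundary of a supertile lies in some sub-tile but not in its interior), which yields $\theta^n p_j\le\sum_i(M^n)_{ij}p_i$, i.e. $M^{\mathrm T}\mathbf{p}\ge\theta\mathbf{p}$ with $\mathbf{p}=(p_i)_i\ge 0$. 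Pairing with a strictly positive left Perron eigenvector of $M^{\mathrm T}$ forces equality, so $\mathbf{p}$ is a nonnegative eigenvector of $M^{\mathrm T}$ for its Perron eigenvalue, hence $p_i=c\,a_i$ for a single constant $c\in[0,1)$. It remains to show $c=0$.

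\textbf{Killing the constant.} Suppose $c>0$. I would first observe that $\calT$ then contains two distinct tiles with positive-measure overlap: for any $S'\in\calT$, every point of $\partial\supp S'$ lies in the support of some tile $\neq S'$ (otherwise it would be interior to $\supp S'$), so by FLC $\partial\supp S'$ is a \emph{finite} union $\bigcup_{S''}(\supp S'\cap\supp S'')$ over the finitely many neighbours $S''\neq S'$ of $S'$, and since $\vol(\partial\supp S')=c\,a_{\mathrm{type}(S')}>0$, at least one $\supp S'\cap\supp S''$ has positive measure, say $\ge\delta>0$. The patch $\{S',S''\}$ is $\calT$-legal, so by primitivity and repetitivity a translate of it occurs as a sub-patch of $\omega^n(T_j)$ once $n$ is large (the ball covered by $Q^n\supp T_j$ grows without bound). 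Thus for such $n$ the patch $\omega^n(T_j)$ contains two distinct tiles whose supports overlap in a set of measure $\ge\delta$; as these tiles have disjoint interiors this overlap lies in $\partial\supp S'\cap\partial\supp S''$, so, writing $m(x)$ for the number of tiles $S\in\omega^n(T_j)$ with $x\in\partial\supp S$, the set $\{m\ge 2\}$ has measure $\ge\delta$. Using $(M^n)^{\mathrm T}\mathbf{p}=\theta^n\mathbf{p}$ and the fact that points counted at least twice are over-counted in $\sum_S\vol(\partial\supp S)$,
\[
\theta^n p_j=\vol\bigl(\partial\supp\omega^n(T_j)\bigr)\le\vol\Bigl(\bigcup_{S\in\omega^n(T_j)}\partial\supp S\Bigr)\le\sum_{S\in\omega^n(T_j)}\vol(\partial\supp S)-\vol(\{m\ge 2\})=\theta^n p_j-\delta,
\]
which is absurd. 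Hence $c=0$, i.e. $p_i=\vol(\partial\supp T_i)=0$ for every proto-tile; since every tile of the self-affine tiling $\calT$ is a translate of some proto-tile, $\vol(\partial\supp T)=0$ for all $T\in\calT$.

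\textbf{The main obstacle.} The delicate part is the second step. One cannot simply say ``the boundary is $(d-1)$-dimensional, hence $\vol$-null'': tile boundaries of self-affine tilings can be genuine fractals of Hausdorff dimension strictly between $d-1$ and $d$, and, more tellingly, the relative measure of the boundary inside a supertile equals $c$ at \emph{every} inflation level, so no naive decay is available. The content of the overlap count is that positive boundary measure would force adjacent tiles to meet along positive-measure faces, and once such an overlap is made to appear inside a single large supertile --- which is exactly where primitivity and repetitivity are used --- the volume bookkeeping becomes inconsistent. A more robust but less self-contained alternative is to invoke directly the theorem of Praggastis (equivalently Lagarias--Wang) that a self-affine tile has a Lebesgue-null boundary; this is precisely the cited reference, so in the paper it is legitimate simply to quote it. Minor technical points to check carefully are the inclusion $\partial\supp\omega^n(T_j)\subseteq\bigcup_{S}\partial\supp S$ and the Perron--Frobenius identifications, all of which rest on the tiles of a supertile covering its support with pairwise disjoint interiors.
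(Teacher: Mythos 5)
The paper does not prove this lemma at all: it is quoted from Praggastis's thesis \cite{prag} and used as a black box, so there is no internal proof to compare against. Your argument is a correct, self-contained reconstruction along the classical lines used for self-affine tiles: the covering $\partial\supp\omega^{n}(T_j)\subseteq\bigcup_{S\in\omega^{n}(T_j)}\partial\supp S$ gives $M^{\mathrm T}\mathbf{p}\ge\theta\mathbf{p}$, pairing with a strictly positive left Perron eigenvector upgrades this to equality, and a single legal two-tile patch with positive-measure overlap, planted inside a large supertile by primitivity and repetitivity, breaks the resulting volume bookkeeping. Two points deserve tightening. First, your assertion that $\mathbf{a}=(\vol(\supp T_i))_i$ is an eigenvector of $M^{\mathrm T}$ for $\theta$ is not justified at that stage --- the identity $\sum_i M_{ij}a_i=\theta a_j$ would already require null boundaries, which is what you are trying to prove. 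What your two estimates actually yield, namely $(M^{n})^{\mathrm T}(\mathbf a-\mathbf p)\le\theta^{n}\mathbf a$ (disjoint interiors packed into $Q^{n}\supp T_j$) and $(M^{n})^{\mathrm T}\mathbf a\ge\theta^{n}\mathbf a$ (supports covering $Q^{n}\supp T_j$), with $\mathbf a-\mathbf p$ strictly positive, is, via the Perron--Frobenius asymptotics of $(M^{n})^{\mathrm T}$, exactly the fact you need downstream: $\theta$ is the Perron eigenvalue of the primitive matrix $M$. Likewise $p_i=c\,a_i$ is both stronger than needed and not quite available; the dichotomy ``$\mathbf p=0$ or $\mathbf p$ strictly positive,'' which is all the second step uses, follows directly from Perron--Frobenius once $M^{\mathrm T}\mathbf p=\theta\mathbf p$ is established. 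Second, in the final display the last ``$=$'' should be ``$\le$''. With those repairs the proof is complete, and the overlap-counting contradiction in your second step is exactly the right mechanism; since the paper leaves the lemma unproved, your write-up (or simply the citation, as the paper chooses) is a legitimate way to discharge it.
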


\begin{lem}\label{lem_for_limit-periodic}
     For an overlap $(S_{0},y_{0},T_{0})$ and $k\in\Zpo$, set
     \begin{align*}
             K_{k}=\supp(\sigma^{k}(S_{0}+y_{0})\cap\sigma^{k}(T_{0})).
     \end{align*}

     Then, for each $k$, $K_{k}\subset K_{k+1}$ and
       \begin{align}
              1-\frac{\vol(K_{k+1})}{\vol((\supp S_{0}+y_{0})\cap\supp T_{0})}
                   \leqq \left(1-\frac{V_{\min}}{DV_{\max}}\right)
                            \left(1-\frac{\vol(K_{k})}{\vol((\supp S_{0}+y_{0})\cap \supp T_{0})}\right),
     \end{align}
     and so $\vol(K_{k})\rightarrow\vol((\supp S_{0}+y_{0})\cap \supp T_{0})$ as $k\rightarrow\infty$.
\end{lem}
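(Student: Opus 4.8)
The plan is to work at the original scale, exploiting that $\sigma$ preserves supports, so that $K_{k}$ is an increasing family of subsets of the fixed region $W:=(\supp S_{0}+y_{0})\cap\supp T_{0}$, and to show its complement in $W$ shrinks by the factor $1-\frac{V_{\min}}{DV_{\max}}$ at each step. First I would record $K_{k}\subset K_{k+1}$: if a (labelled) tile $\tau$ lies in both $\sigma^{k}(S_{0}+y_{0})$ and $\sigma^{k}(T_{0})$, then every tile of $\sigma(\tau)$ lies in both $\sigma^{k+1}(S_{0}+y_{0})$ and $\sigma^{k+1}(T_{0})$, while $\supp\sigma(\tau)=\supp\tau$; taking the union over all such $\tau$ gives $K_{k}\subset K_{k+1}$. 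Put $V=\vol(W)>0$ and $U_{k}=W\setminus K_{k}$, so $\vol(U_{k})=V-\vol(K_{k})$ since tile boundaries are null (\cite{prag}). The remaining task is then to prove $\vol(K_{k+1})-\vol(K_{k})\ge\frac{V_{\min}}{DV_{\max}}\vol(U_{k})$, which rearranges to the asserted inequality after dividing by $V$, and which gives $\vol(K_{k})\to V$ upon iteration since $0<\frac{V_{\min}}{DV_{\max}}<1$.

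For the increment bound I would use the compatibility identities $\sigma^{k}(S_{0}+y_{0})=Q^{-k}(\om^{k}(S_{0}))+y_{0}$ and $\sigma^{k}(T_{0})=Q^{-k}(\om^{k}(T_{0}))$, so the tiles of these patches are $Q^{-k}$-rescalings (translated by $y_{0}$ in the first case) of the $\calT$-patches $\om^{k}(S_{0})$ and $\om^{k}(T_{0})$, which lie in $\calT$ since $\om(\calT)=\calT$. Fix a tile $\tau\in\sigma^{k}(S_{0}+y_{0})$ that is not in the coincidence set at level $k$ and with $\vol(\supp\tau\cap U_{k})>0$; choose $\tau'\in\sigma^{k}(T_{0})$ with $\Int\tau\cap\Int\tau'\neq\emptyset$ (such a $\tau'$ exists because $\supp\tau$ meets $W\subset\supp T_{0}$ in positive measure). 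Write $\tau=Q^{-k}(\check\tau)+y_{0}$ with $\check\tau\in\om^{k}(S_{0})\subset\calT$ and $\tau'=Q^{-k}(\hat\tau')$ with $\hat\tau'\in\om^{k}(T_{0})\subset\calT$; then $(\check\tau,Q^{k}y_{0},\hat\tau')$ is an overlap for $\calT$ (note $Q^{k}y_{0}\in\Xi$, as $\om^{k}$ carries a witness of $y_{0}\in\Xi$ to one of $Q^{k}y_{0}\in\Xi$). By the standing assumption made just before the lemma, $\om(\check\tau+Q^{k}y_{0})\cap\om(\hat\tau')\neq\emptyset$, so there are tiles $A\in\om(\check\tau)$ and $B\in\om(\hat\tau')$ with $A+Q^{k+1}y_{0}=B$. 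Setting $a_{\tau}:=Q^{-(k+1)}(A)+y_{0}=Q^{-(k+1)}(B)$, the same compatibility identities show $a_{\tau}$ is a single tile lying in $\sigma(\tau)\subset\sigma^{k+1}(S_{0}+y_{0})$ and in $\sigma(\tau')\subset\sigma^{k+1}(T_{0})$, hence $\supp a_{\tau}\subset K_{k+1}$, and moreover $\supp a_{\tau}\subset\supp\tau\cap\supp\tau'\subset W$. Since $A$ is a $\calT$-tile we have $\vol(A)\ge V_{\min}$, while $\vol(\check\tau)\le V_{\max}$; hence $\vol(a_{\tau})=D^{-(k+1)}\vol(A)\ge D^{-(k+1)}V_{\min}\ge\frac{V_{\min}}{DV_{\max}}D^{-k}V_{\max}\ge\frac{V_{\min}}{DV_{\max}}\vol(\tau)$. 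Because $\tau$ is not a coincidence tile at level $k$, its interior is disjoint from $K_{k}$, so $\supp a_{\tau}$ is disjoint from $K_{k}$ up to a null set.

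To finish, let $\mathcal{N}$ be the set of tiles $\tau\in\sigma^{k}(S_{0}+y_{0})$ outside the coincidence set with $\vol(\supp\tau\cap U_{k})>0$. The supports of the tiles of $\sigma^{k}(S_{0}+y_{0})$ cover $\supp S_{0}+y_{0}\supset W$, and any point of $U_{k}$ lying in a coincidence tile's support would lie in $K_{k}$; hence the supports of the tiles in $\mathcal{N}$ cover $U_{k}$ up to a null set, so $\sum_{\tau\in\mathcal{N}}\vol(\tau)\ge\vol(U_{k})$. The tiles of $\sigma^{k}(S_{0}+y_{0})$ have pairwise disjoint interiors, so the sets $\supp a_{\tau}\subset\supp\tau$ ($\tau\in\mathcal{N}$) are pairwise disjoint up to null sets and all contained in $K_{k+1}\setminus K_{k}$ up to null sets; therefore $\vol(K_{k+1})-\vol(K_{k})\ge\sum_{\tau\in\mathcal{N}}\vol(a_{\tau})\ge\frac{V_{\min}}{DV_{\max}}\sum_{\tau\in\mathcal{N}}\vol(\tau)\ge\frac{V_{\min}}{DV_{\max}}\vol(U_{k})$, as required. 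I do not expect a deep obstacle here: the crucial input — that a not-yet-coincident tile produces, after one subdivision, a coincident sub-tile of relative volume at least $V_{\min}/(DV_{\max})$ — is handed to us by the standing one-step-coincidence assumption, and the real work is only bookkeeping: checking the scaling identities $\sigma^{k}(S_{0}+y_{0})=Q^{-k}(\om^{k}(S_{0}))+y_{0}$ and $a_{\tau}=Q^{-(k+1)}(A)+y_{0}=Q^{-(k+1)}(B)$ on the nose (including matching of labels), and controlling the ubiquitous measure-zero boundary exceptions via $\vol(\partial T)=0$.
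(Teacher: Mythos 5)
Your proposal is correct and follows essentially the same route as the paper: both arguments use the standing one-step coincidence assumption to extract, from each not-yet-coincident overlapping pair at level $k$, a common sub-tile at level $k+1$ of volume at least $D^{-k-1}V_{\min}$, and compare this against the bound $D^{-k}V_{\max}$ on the tiles covering the uncovered region. The only cosmetic difference is that you index by tiles $\tau$ of $\sigma^{k}(S_{0}+y_{0})$ (choosing one partner $\tau'$ each) and transport the overlap back to $\calT$-scale before applying the assumption, whereas the paper indexes by pairs $(S,T)$ and applies the assumption directly to the rescaled patches.
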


\begin{proof}
    If $k>0$, $S\in\sigma^{k}(S_{0}+y_{0})$, $T\in\sigma^{k}(T_{0})$,
     $\Int S\cap\Int T\neq\emptyset$ and $S\neq T$, then there is a 
     $U_{S,T}\in\sigma(S)\cap\sigma(T)$.
     It is clear that $\Int U_{S,T}\cap K_{k}=\emptyset$, since otherwise $S=T$. We have
     \begin{align*}
            1-\frac{\vol(K_{k+1})}{\vol((\supp S_{0}+y_{0})\cap\supp T_{0})}
            \leqq 1-\frac{\vol(K_{k})}{\vol((\supp S_{0}+y_{0})\cap\supp T_{0})}\\
            -\frac{1}{\vol((\supp S_{0}+y_{0})\cap\supp T_{0})}\sum\vol(\supp U_{S,T}),
     \end{align*}
     where the sum is for all $S$ and $T$ with the above condition.
     The claim of the lemma follows from the observations
     \begin{align*}
            D^{-k}V_{\max}\card\{(S,T)\in\sigma^{k}(S_{0}+y_{0})\cap\sigma^{k}(T_{0})\mid
                    T\neq S,\Int T\cap\Int S\neq\emptyset\}\\\geqq\vol((\supp(S_{0}+y_{0})\cap\supp T_{0})\setminus K_{k}),
     \end{align*}
     and
     \begin{align*}
             \vol(\supp U_{S,T})\geqq D^{-k-1}V_{\min}.
     \end{align*}
\end{proof}

       Set 
        \begin{align*}
                 \calP_{k}=\bigcap_{x\in\Xi}\sigma^{k}(\calT-x).
        \end{align*}
        
        Note that $Q^k(\calP_k)=\bigcap_{x\in\Xi}D_{Q^k(x)}$. 
        We will show that (1)any $T\in Q^k(\calP_k)$ is a part of a full-rank
        infinite arithmetic progression, and (2)the density of $\calP_k$, which is same as 
        the density of $Q^k(\calP_k)$, tends to $1$ as $k\rightarrow\infty$.
        
        Note that by Lemma \ref{lem_for_limit-periodic}, we have
        $\supp\calP_{k}\subset\supp\calP_{k+1}$ for each $k>0$.
        
        \begin{lem}\label{lem_tiles_in_calPk_periodic}
               There is a natural number $M$ such that for any $k>0$ and $S\in\calP_{k}$,
               the tile $Q^{k}S$ is a  tile in $\calT$ with
               $\{Q^{k}S+x\mid x\in Q^{k+M}L\}\subset\calT$.
        \end{lem}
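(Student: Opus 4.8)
The plan is to unwind the definition of $\calP_{k}=\bigcap_{x\in\Xi}\sigma^{k}(\calT-x)$ and then push everything forward by the expansion map. First I would fix the constant $M$: since $\calT$ has pure discrete dynamical spectrum and satisfies the equivalent conditions of Corollary \ref{thm_alg-coin_equiv_Xi-includes-lattice}, condition $2.$ of that corollary provides a positive integer $M$ with $Q^{M}(L)\subset\Xi$, and this $M$ will be the one asserted in the lemma (it depends only on $\calT$, $L$ and $\omega$, not on $k$ or $S$).

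Next I would record the two bookkeeping identities I need, both immediate from the properties of $\sigma$ listed just before Lemma \ref{lem_for_limit-periodic}: $\sigma$ commutes with translations, hence $\sigma^{k}(\calT-x)=\sigma^{k}(\calT)-x$ for every $x$; and $Q^{k}\circ\sigma^{k}=\omega^{k}$ (from $\sigma=Q^{-1}\omega$ together with $\sigma\circ Q=Q\circ\sigma$), so that, using $\omega^{k}(\calT)=\calT$, we get $Q^{k}(\sigma^{k}(\calT))=\calT$. In particular every tile of $\sigma^{k}(\calT)$ is of the form $Q^{-k}$ of a tile of $\calT$, so for any $S\in\sigma^{k}(\calT)$ the object $Q^{k}S$ is genuinely a tile of $\calT$.

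Now fix $k>0$ and $S\in\calP_{k}$. By the first identity, $S\in\calP_{k}$ says exactly that $S+x\in\sigma^{k}(\calT)$ for every $x\in\Xi$. Applying $Q^{k}$ and using the second identity gives $Q^{k}S+Q^{k}x\in\calT$ for every $x\in\Xi$. Taking $x=0\in\Xi$ shows $Q^{k}S\in\calT$; letting $x$ range over $Q^{M}(L)\subset\Xi$ shows $Q^{k}S+Q^{k+M}v\in\calT$ for all $v\in L$, i.e. $\{Q^{k}S+x\mid x\in Q^{k+M}L\}\subset\calT$. Since $M$ was chosen independently of $k$ and $S$, this proves the lemma. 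I do not expect any genuine obstacle here: the only care needed is in verifying the translation-commutation and $Q^{k}\sigma^{k}=\omega^{k}$ identities and in noting that $Q^{k}S$ is a bona fide tile of $\calT$; everything else is a direct chase through the definition of $\calP_{k}$.
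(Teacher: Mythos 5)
Your proof is correct and follows the same route as the paper's: take $M$ with $Q^{M}(L)\subset\Xi$ from the assumed conditions of Corollary \ref{thm_alg-coin_equiv_Xi-includes-lattice}, note that $S\in\calP_{k}$ gives $S+x\in\sigma^{k}(\calT)$ for all $x\in\Xi$, and apply $Q^{k}$ using $Q^{k}\sigma^{k}(\calT)=\omega^{k}(\calT)=\calT$. The only difference is that you spell out the bookkeeping identities (translation-equivariance of $\sigma$, $0\in\Xi$) slightly more explicitly than the paper does.
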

        \begin{proof}
              By assumption, there is an $M$ such that
              \begin{align}
                    Q^{M}L\subset\Xi, \label{eq_Q^ML_subset_Xi}
              \end{align}
             If $k>0$ and $S\in\calP_{k}$, then
              for each $x\in\Xi$ we have $S+x\in\sigma^{k}(\calT)$.
              By the definition of $\sigma$, we see $Q^{k}(S)+Q^{k}(x)\in Q^{k}\sigma^{k}(\calT)
              =\omega^{k}(\calT)=\calT$.
              By \eqref{eq_Q^ML_subset_Xi}, we can set $x=Q^{M}(y)$ for each $y\in L$.
        \end{proof}
        
        \begin{lem}
             If $T\in\calT$ does not satisfy  $\{T+x\mid x\in Q^{l}(L)\}\subset\calT$
              for any $l>0$, then
             $T\notin Q^{k}\calP_{k}$ for any $k$.
        \end{lem}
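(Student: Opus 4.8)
The plan is to prove the statement as the contrapositive of Lemma \ref{lem_tiles_in_calPk_periodic}, since that lemma already carries all the content; nothing new needs to be done except to read it in the other direction and to check the bookkeeping on the exponent.

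Concretely, I would first fix the natural number $M$ with $Q^{M}L\subset\Xi$ provided by Lemma \ref{lem_tiles_in_calPk_periodic} (equivalently by \eqref{eq_Q^ML_subset_Xi}). Then I would suppose, for contradiction, that $T\in Q^{k}\calP_{k}$ for some $k>0$, and write $T=Q^{k}S$ with $S\in\calP_{k}$. Lemma \ref{lem_tiles_in_calPk_periodic} then says precisely that $Q^{k}S$ is a tile of $\calT$ with $\{Q^{k}S+x\mid x\in Q^{k+M}L\}\subset\calT$, that is, $T$ satisfies $\{T+x\mid x\in Q^{l}(L)\}\subset\calT$ with $l=k+M>0$, contradicting the hypothesis on $T$. (If one also wishes to allow $k=0$, then $T=S\in\calP_{0}=\bigcap_{x\in\Xi}(\calT-x)$, so $T+x\in\calT$ for every $x\in\Xi\supset Q^{M}L$, which again gives $\{T+x\mid x\in Q^{l}(L)\}\subset\calT$ with $l=M>0$.) Taking the contrapositive yields exactly the assertion: a tile $T$ failing $\{T+x\mid x\in Q^{l}(L)\}\subset\calT$ for every $l>0$ cannot lie in $Q^{k}\calP_{k}$ for any $k$.

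There is essentially no obstacle here. The only things to be careful about are that the exponent $l$ produced is genuinely positive — which holds because $M\geq 1$ and $k\geq 0$ — and that $Q^{k}$ really carries members of $\calP_{k}$ to honest tiles of $\calT$, which is just the identity $Q^{k}\sigma^{k}=\omega^{k}$ together with $\omega^{k}(\calT)=\calT$ that was already exploited in the proof of Lemma \ref{lem_tiles_in_calPk_periodic}. Thus the proof is a two-line deduction from the preceding lemma.
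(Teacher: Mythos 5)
Your proposal is correct and is exactly the paper's argument: the paper's proof of this lemma is the single line ``This is clear by Lemma \ref{lem_tiles_in_calPk_periodic},'' i.e.\ the contrapositive reading you spell out, with $l=k+M$. Your additional bookkeeping (positivity of $l$, the identity $Q^{k}\sigma^{k}=\omega^{k}$, and the $k=0$ case) just makes explicit what the paper leaves implicit.
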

        \begin{proof}
              This is clear by Lemma \ref{lem_tiles_in_calPk_periodic}.
        \end{proof}
        
        By this lemma, the proof of Theorem  \ref{thm_limit_periodic} is completed if we prove the
        density of each $\calP_{k}$ is large enough. We now show this.

          \begin{lem}
                   For all van Hove sequence $(A_{n})_{n}$,  we have
                   \begin{align*}
                       \lim_{k\rightarrow\infty} \dens_{(A_{n})_{n}}\calT\setminus Q^{k}\calP_{k}= 0.
                   \end{align*}
                   (In fact, the convergence as $k\rightarrow\infty$ is uniform for all van Hove sequence
                   $(A_{n})_{n}$.)
          \end{lem}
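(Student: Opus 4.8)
The plan is to estimate, one level-$k$ supertile at a time and with a bound uniform in the supertile, the volume of the part of $\supp\omega^k(T)$ not covered by tiles of $Q^k\calP_k$, and then transfer this to the density. As already noted, since the supertiles $\{\omega^k(T)\mid T\in\calT\}$ partition $\calT=\omega^k(\calT)$ (each tile of $\calT$ lies in exactly one of them), one may swap the intersection and the union to get $Q^k\calP_k=\bigcup_{T\in\calT}\bigcap_{x\in\Xi}(\omega^k(T)\cap\omega^k(\calT-x))$, so $\calT\setminus Q^k\calP_k$ is the disjoint union over $T\in\calT$ of the tiles of $\omega^k(T)$ lying in
\begin{align*}
      \mathrm{Bad}_k(T)=\bigcup_{x\in\Xi}\Bigl(\supp\omega^k(T)\setminus\supp\bigl(\omega^k(T)\cap\omega^k(\calT-x)\bigr)\Bigr).
\end{align*}
It therefore suffices to find a constant $P<\infty$ and $\theta\in(0,1)$, independent of $k$ and $T$, with $\vol(\mathrm{Bad}_k(T))\le P\theta^k\vol(\supp\omega^k(T))$.

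First I would bound the contribution of a single $x\in\Xi$. Fix $T\in\calT$ and $x\in\Xi$. Because $\calT+x$ is again a tiling, the sets $(\supp S+x)\cap\supp T$, $S\in\calT$, partition $\supp T$ up to Lebesgue-null sets (using $\vol(\partial S)=0$ from \cite{prag}), and for distinct $S$ the coincidence supports $\supp(\sigma^k(S+x)\cap\sigma^k(T))$ are disjoint subsets of them. After the reduction made at the start of this subsection, every overlap becomes a coincidence in one step, so Lemma \ref{lem_for_limit-periodic} applies to each overlap $(S,x,T)$ and gives $\vol((\supp S+x)\cap\supp T)-\vol(\supp(\sigma^k(S+x)\cap\sigma^k(T)))\le\theta^k\vol((\supp S+x)\cap\supp T)$ with $\theta=1-\tfrac{V_{\min}}{DV_{\max}}$. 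Summing over $S$ and applying $Q^k$,
\begin{align*}
      \vol\bigl(\supp\omega^k(T)\setminus\supp(\omega^k(T)\cap\omega^k(\calT-x))\bigr)\le\theta^k\vol(\supp\omega^k(T)).
\end{align*}

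The step I expect to be the main obstacle is collapsing the union over the infinite set $\Xi$ into a finite one. The point is that, for fixed $T$, the set $\supp\omega^k(T)\setminus\supp(\omega^k(T)\cap\omega^k(\calT-x))$ depends on $x$ only through the set of equivalence classes of the overlaps $(S,x,T)$, $S\in\calT$: equivalent overlaps have the same support position relative to $\supp T$ and the same itinerary in the subdivision graph $\Gk_{\Ok}(\calT)$, hence the same non-coincidence region inside $\supp\omega^k(T)$, and this set-valued dependence distributes over the (finite) set of classes realized by $x$. Since there are only finitely many overlap equivalence classes (Lemma \ref{equivclass_overlap_finite}, which uses that $\Xi$ is a Meyer set, as already invoked above), say $P$ of them, choosing one $x_c\in\Xi$ per class $c$ realized at $T$ yields $\mathrm{Bad}_k(T)=\bigcup_c(\supp\omega^k(T)\setminus\supp(\omega^k(T)\cap\omega^k(\calT-x_c)))$, a union of at most $P$ sets, so $\vol(\mathrm{Bad}_k(T))\le P\theta^k\vol(\supp\omega^k(T))$ by the previous step, uniformly in $T$.

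Finally I would pass to densities. The tiles of $\calT\setminus Q^k\calP_k$ are tiles of $\calT=\omega^k(\calT)$ and so have uniformly bounded diameter, whence $\dens_{(A_n)_n}(\calT\setminus Q^k\calP_k)=\limsup_n\vol(\supp((\calT\setminus Q^k\calP_k)\sqcap A_n))/\vol(A_n)$. Putting $d_k=\max_{T}\diam\supp\omega^k(T)$, any supertile meeting $A_n$ is contained in $A_n+B_{d_k}$, and the supertiles have pairwise disjoint interiors, so
\begin{align*}
      \vol\bigl(\supp((\calT\setminus Q^k\calP_k)\sqcap A_n)\bigr)\le\sum_{T:\,\supp\omega^k(T)\cap A_n\neq\emptyset}\vol(\mathrm{Bad}_k(T))\le P\theta^k\vol(A_n+B_{d_k}).
\end{align*}
Dividing by $\vol(A_n)$, letting $n\to\infty$ and using that $(A_n)_n$ is van Hove (so $\vol(A_n+B_{d_k})/\vol(A_n)\to1$ for the fixed radius $d_k$), we get $\dens_{(A_n)_n}(\calT\setminus Q^k\calP_k)\le P\theta^k$. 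This bound is independent of the van Hove sequence and tends to $0$ as $k\to\infty$, which proves the lemma together with the asserted uniformity.
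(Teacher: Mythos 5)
Your proposal is correct and follows essentially the same route as the paper's proof: the per-overlap exponential convergence from Lemma \ref{lem_for_limit-periodic}, the reduction of the union over the infinite set $\Xi$ to a finite union via the finiteness of overlap equivalence classes (Lemma \ref{equivclass_overlap_finite}), and a supertile-by-supertile volume count transferred to densities using the van Hove property. The only differences are cosmetic: you keep an explicit bound $P\theta^k$ and enlarge $A_n$ to $A_n+B_{d_k}$, whereas the paper works with $\sigma^k$ and the rescaled van Hove sequence $(Q^{-k}A_n)_n$ and indexes its finite union by the (at most $2^{N_0}$) subsets of overlap classes rather than by the classes themselves.
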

\begin{proof}
               For each $S\in\calT$ and
                $x\in\Xi$, by Lemma \ref{lem_for_limit-periodic} we have, as $k\rightarrow\infty$,
        \begin{align}
                 \vol\supp(\sigma^{k}(S)\cap\sigma^{k}(\calT-x))
                         &=\vol\supp\left(\bigcup_{T\in\calT\sqcap (\Int S+x)}\sigma^{k}(S)\cap\sigma^{k}(T-x)\right)\nonumber\\
                         &=\sum_{T\in\calT\sqcap (\Int S+x)}\vol\supp(\sigma^{k}(S)\cap\sigma^{k}(T-x))\nonumber\\
                         &\rightarrow\sum_{T\in\calT\sqcap (\Int S+x)}\vol(\supp S\cap (\supp T-x))\nonumber\\
                         &=\vol\supp S.\label{convergence_to_volsuppS}
        \end{align}
        Since there are only finitely many equivalence classes of overlaps, 
        this convergence is uniform in $S$ and $x$.


        We use the fact that the overlaps are finite up to equivalence, for one more time.
        Take an $S\in\calT$ and fix it. Consider a map
        \begin{align*}
               \Xi\ni x\mapsto \{[S,x,T]\mid T\in\calT\sqcap (\Int S+x)\}.
        \end{align*}
        Since the range of this map is a finite set, there is a finite set $F_{S}\subset \Xi$ such that
        for any $x\in\Xi$ there is a $y\in F_{S}$ with
        \begin{align*}
                \{[S,x,T]\mid T\in\calT\sqcap (\Int S+x)\}= \{[S,y,T]\mid T\in\calT\sqcap (\Int S+y)\}.
        \end{align*}

        Note that $\card F_{S}\leqq N$  for some constant $N$ independent of $S$,
        where $N=2^{N_{0}}$ and $N_{0}$ is the number
        of all equivalence classes of overlaps in $\calT$. 
        ($N$ is the number of all subsets of the set of the equivalence classes of overlaps.)
        We will use this estimate for the estimate of
        the density of $\sigma^{k}(\calT)\setminus \calP_{k}$, as follows.

        Let $\e$ be an arbitrary positive real number. There is a natural number $k_{0}$ such that
        if $k\geqq k_{0}$, $S\in\calT$ and $x\in\Xi$, we have
        \begin{align*}
               0\leqq \vol\supp S-\vol\supp(\sigma^{k}(S)\cap\sigma^{k}(\calT-x))
                   =\vol\supp(\sigma^{k}(S)\setminus\sigma^{k}(\calT-x))<\e.
        \end{align*}
         
        For any van Hove sequence $(A_{n})_{n}$ and $k\geqq k_{0}$,
        \begin{align*}
              &\limsup_{n\rightarrow\infty} \frac{1}{\vol(A_{n})}\vol\supp((\sigma^{k}(\calT)\setminus \calP_{k})\sci A_{n})\\
                   &=\limsup_{n}\frac{1}{\vol(A_{n})}\sum_{T\in \calT\sci A_{n}}\vol\supp (\sigma^{k}(T)\setminus\calP_{k})\\
                   &\leqq\limsup_{n}\frac{1}{\vol(A_{n})}\sum_{T\in\calT\sci A_{n}}\sum_{x\in F_{T}}\vol\supp
                               (\sigma^{k}(T)\setminus\sigma^{k}(\calT-x))\\
                    &\leqq\limsup_{n}\frac{1}{\vol(A_{n})}\sum_{T\in\calT\sci A_{n}}N\e\\
                    &\leqq\frac{N\e}{V_{\min}},
        \end{align*}
        where we used the fact that
        \begin{align*}
              \sigma^{k}(T)\setminus\calP_{k}&=\bigcup_{x\in\Xi}\sigma^{k}(T)\setminus\sigma^{k}(\calT-x)\\
                             &=\bigcup_{x\in F_{T}}\sigma^{k}(T)\setminus\sigma^{k}(\calT-x)
        \end{align*}
        and
        \begin{align*}
                V_{\min}\card\calT\sci A_{n}\leqq\vol (A_{n}).
        \end{align*}

        We now finish the proof. Take a van Hove sequence $(A_{n})_{n}$. For each $k\geqq k_{0}$,
        $(Q^{-k}A_{n})_{n}$ is again a van Hove sequence. By 
        $\calT=\omega^{k}(\calT)=Q^{k}\sigma^{k}(\calT)$, we have
                \begin{align*}
               \dens_{(A_{n})_{n}}\calT\setminus Q^{k}\calP_{k}&=
                     \limsup_{n }\frac{1}{\vol(A_{n})}\vol\supp((\calT\setminus Q^{k}\calP_{k})\sci A_{n})\\
                     &=\limsup_{n}\frac{1}{\vol(Q^{-k}A_{n})}\vol\supp ((\sigma^{k}(\calT)\setminus\calP_{k})\sci 
                            Q^{-k}A_{n})\\
                     &\leqq\frac{N\e}{V_{\min}}, 
        \end{align*}
        and so $\dens_{(A_{n})_{n}}\calT\setminus Q^{k}\calP_{k}$ tends to $0$
        as $k\rightarrow\infty$.
\end{proof}

\section*{Acknowledgment }
YN was supported by EPSRC grant EP/S010335/1.
SA was supported by JSPS grants (17K05159, 17H02849, BBD30028).
JL was supported by NRF grant No. 2019R1I1A3A01060365.
This study was led by YN.
The authors thank the referee for valuable comments.
The authors also thank Dan Rust for the check of English in section 1 and 2.

\end{document}